    \title[Mutations of $w$-simple-minded systems in $\sT_w$]{Mutations of simple-minded systems in Calabi-Yau categories generated by a spherical object}
    \date{\today}
\author{Raquel Coelho Sim\~oes}
\address{Centro de An\'alise Funcional, Estruturas Lineares e Aplica\c{c}\~oes,Faculdade de ci\^encias da Universidade de Lisboa, Campo Grande, Edif\'icio C6, Piso 2, 1749-016 Lisboa, Portugal.}
\email{rcoelhosimoes@campus.ul.pt}
\newcommand{\hyref}[2]{ \hyperref[#2]{#1~\ref*{#2}} }
\theoremstyle{plain}
\newtheorem{theorem}{Theorem}[section]
\newtheorem{lemma}[theorem]{Lemma}
\newtheorem{corollary}[theorem]{Corollary}
\newtheorem{proposition}[theorem]{Proposition}
\newtheorem{introtheorem}{Theorem}
\theoremstyle{definition}
\newtheorem{remark}[theorem]{Remark}
\newtheorem{example}[theorem]{Example}
\newtheorem*{definition}{Definition} 
\newtheorem*{definitions}{Definitions} 
\newtheorem*{notation}{Notation}
\newcommand{\Case}[1]{\medskip\noindent\emph{Case #1:}}
\DeclareMathAlphabet{\mathpzc}{OT1}{pzc}{m}{it}
\newcommand{\cP}{\mathscr{P}}
\newcommand{\sA}{\mathsf{A}}
\newcommand{\sC}{\mathsf{C}}
\newcommand{\sK}{\mathsf{K}}
\newcommand{\sS}{\mathsf{S}}
\newcommand{\sT}{\mathsf{T}}
\newcommand{\sX}{\mathsf{X}}
\newcommand{\sY}{\mathsf{Y}}
\newcommand{\bN}{\mathbb{N}}
\newcommand{\bZ}{\mathbb{Z}}
\newcommand{\tA}{\mathtt{A}}
\newcommand{\tS}{\mathtt{S}}
\newcommand{\tU}{\mathtt{U}}
\newcommand{\tX}{\mathtt{X}}
\newcommand{\Tw}{\mathsf{T}_w}
\newcommand{\Ainf}{A_{\infty}}
\renewcommand{\geq}{\geqslant}
\renewcommand{\leq}{\leqs}
\renewcommand{\phi}{\varphi}
\renewcommand{\epsilon}{\varepsilon}
\newcommand{\cocone}[1]{\mathsf{cocone}(#1)}
\DeclareMathOperator{\Hom}{\mathsf{Hom}}
\DeclareMathOperator{\Ext}{\mathsf{Ext}}
\newcommand{\ind}[1]{\mathsf{ind}(#1)}
\DeclareMathOperator{\modulo}{\mathrm{mod}}
\newcommand{\kk}{{\mathbf{k}}}
\newcommand{\SSS}{\mathbb{S}}
\newcommand{\thick}[2]{\mathsf{thick}_{#1}(#2)}
\newcommand{\extn}[1]{\langle #1 \rangle}
\newcommand{\leqs}{\leqslant}
\newcommand{\tri}[3]{#1\rightarrow #2\rightarrow #3\rightarrow \Sigma #1}
\newcommand{\trilabels}[6]{#1\stackrel{#4}{\longrightarrow} #2\stackrel{#5}{\longrightarrow} #3\stackrel{#6}{\longrightarrow} \Sigma #1}
\newcommand{\rayto}[1]{\mathsf{ray}_{\! -}(#1)}
\newcommand{\exray}[2]{\mathsf{exray}_{#1}(#2)}
\newcommand{\homfrom}[1]{\mathsf{H}^+(#1)}
\newcommand{\homto}[1]{\mathsf{H}^-(#1)}
\newcommand{\arc}[1]{\mathtt{#1}}
\newcommand{\rowmat}[2]{\big[#1 \ #2\big]}
\newcommand{\arr}{\ar[ur] \ar[dr]}
\newcommand{\arup}{\ar@/^/[u]}
\newcommand{\ardn}{\ar@/^/[d]}
\newcommand{\coloneqq}{\mathrel{\mathop:}=}
\newcommand{\smxy}[1]{{\text{\tiny$#1$}}}
\begin{document}

\begin{abstract}
In this article, we give a definition and a classification of `higher' simple-minded systems in triangulated categories generated by spherical objects with negative Calabi-Yau dimension. We also study mutations of this class of objects and that of `higher' Hom-configurations and Riedtmann configurations. This gives an explicit analogue of the `nice' mutation theory exhibited in cluster-tilting theory.
\end{abstract}

\keywords{Calabi-Yau triangulated category; cluster-tilting objects; functorial finiteness; mutation; Riedtmann configuration; simple-minded system; spherical object}

\subjclass[2010]{Primary: 05E10, 16G20, 16G70, 18E30; Secondary: 13F60}

\maketitle

\addtocontents{toc}{\protect{\setcounter{tocdepth}{-1}}}  
\section*{Introduction} 
\addtocontents{toc}{\protect{\setcounter{tocdepth}{1}}}   

Simple-minded systems \cite{KL} and Riedtmann configurations \cite{Riedtmann} (also known as maximal systems of stable orthogonal bricks \cite{Po}) appear in the context of selfinjective algebras in the study of a well-known conjecture of Auslander and Reiten. In that setup, simple-minded systems and Riedtmann configurations are $(-1)$-Calabi-Yau (CY).

In this paper we introduce {\it $w$-simple-minded systems}, for a negative integer $w$, generalising simple-minded systems(= $(-1)$-simple-minded systems) in the sense that a $w$-simple-minded system is $w$-CY. The Calabi-Yau property is an important duality condition introduced by Kontsevich, which has since appeared in many areas in mathematics and physics. The main motivational example for us comes from cluster-tilting theory in representation theory, an important recent breakthrough which generalises classical tilting theory and gives a categorification of cluster algebras. 

Initially, cluster-tilting theory took place in the cluster category, an orbit category of the bounded derived category of a finite-dimensional hereditary algebra. The theory was then developed to higher analogues of these categories, so called $m$-cluster categories, for $m \geq 2$, and led to the systematic study of positive CY triangulated categories, which reveal many interesting and important homological and combinatorial properties. Not much is known, however, about negative CY triangulated categories, although there is recent progress in \cite{CS1, CS2, CS3, CSP, HJY}.

We study $w$-simple-minded objects in triangulated categories $\Tw$ generated by a $w$-spherical object. These categories, which can be defined for any integer $w$ and form an important class of $w$-CY triangulated categories, have been subject of intensive study (see \cite{FY, HJ1, HJY, J, KYZ, Ng, ZZ}), especially in connection with cluster-tilting theory, for $w \geq 2$.
 
The main object of study in $w$-cluster categories, and other positive CY triangulated categories, are $w$-cluster tilting objects. One remarkable property of these objects, which was the feature that enabled the first categorification of cluster algebras in \cite{BMRRT}, is \textit{mutation}, an operation that allows us to construct a new object from a given one by replacing an indecomposable summand. Connections between mutations of cluster-tilting objects and mutations of other notions such as simple-minded collections and t-structures \cite{King-Qiu,KY} and silting objects and exceptional collections \cite{Aihara-Iyama, BRT} have now been widely explored. In the case of finite-dimensional algebras, mutation can be thought of as a discrete version of the wall-crossing phenomena that occurs on the Bridgeland stability manifold \cite{Bridgeland}. Therefore, studying the mutation theory of various Calabi-Yau configurations is an important problem. 

In \cite{HJ2}, a combinatorial classification of $(w-1)$-cluster tilting and weakly $(w-1)$-cluster tilting subcategories in $\Tw$ ($w \geq 2$) is given and used to study their mutation theory. In \cite{CS3}, a geometric model of $\Tw$ in terms of {\it admissible arcs} of an $\infty$-gon is used to give a classification of $w$-Riedtmann configurations and $w$-Hom configurations, a larger class of objects which includes $w$-Riedtmann configurations. In this paper we shall see that $w$-simple-minded systems are $w$-Riedtmann configurations and, using the computation of middle terms of extensions in $\Tw$ studied in \cite{CSP}, we shall prove the following theorem.

\begin{introtheorem}[see Theorem \ref{thm:classificationsms} and Corollary \ref{cor:functoriallyfinite}]\label{introthm:sms}
Let $w \leq -1$, $\sS$ be a $w$-Riedtmann configuration in $\Tw$ and $\tS$ the corresponding set of admissible arcs. The following are equivalent:
\begin{compactenum}
\item $\sS$ is a $w$-simple-minded system;
\item Every arc in $\tS$ has an overarc in $\tS$;
\item The extension closure of $\sS$ is functorially finite. 
\end{compactenum}
\end{introtheorem}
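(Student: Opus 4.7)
My plan is to establish the three equivalences by a cyclic argument (1) $\Rightarrow$ (3) $\Rightarrow$ (2) $\Rightarrow$ (1), using the geometric model of $\Tw$ from \cite{CS3} and the explicit description of middle terms of extensions in $\Tw$ from \cite{CSP} as the main tools. The implication (1) $\Rightarrow$ (3) should be essentially formal: a $w$-simple-minded system is by definition a $w$-Riedtmann configuration whose extension closure equals $\Tw$, and $\Tw$ is trivially functorially finite in itself, so $\extn{\sS} = \Tw$ is functorially finite.

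For (3) $\Rightarrow$ (2) I would argue by contraposition. Suppose an arc $\arc{a} \in \tS$ has no overarc in $\tS$. Using the geometric model I would exhibit an indecomposable $X \in \Tw$, obtained roughly by ``extending $\arc{a}$ past its upper endpoint'', for which every candidate right $\extn{\sS}$-approximation $f \colon E \to X$ forces the cone of $f$ to contain an indecomposable summand whose admissible arc again lacks an overarc in $\tS$. An infinite descent of such summands obstructs the existence of any right $\extn{\sS}$-approximation of $X$, contradicting the functorial finiteness of $\extn{\sS}$.

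The content of the theorem lies in (2) $\Rightarrow$ (1), which amounts to showing $\extn{\sS} = \Tw$. Given any indecomposable $X$ with admissible arc $\arc{x}$, the overarc hypothesis supplies $\arc{s} \in \tS$ that ``covers'' an endpoint or a segment of $\arc{x}$. Applying the middle-term formula from \cite{CSP} to a suitably chosen non-zero morphism between the corresponding objects then yields a triangle $S \to X \to Y \to \Sigma S$ with $S \in \sS$ and $Y$ represented by an arc strictly simpler than $\arc{x}$ in a carefully chosen complexity measure. Iterating this reduction places $X$ in $\extn{\sS}$, and combined with the $w$-Riedtmann property this proves (1).

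The principal difficulty I foresee is the well-foundedness of the induction in the last step: since the $\infty$-gon has accumulation points, a naive ``length'' of an admissible arc need not strictly decrease along each reduction step, and one must also handle arcs with infinite endpoints. The remedy should be a two-step induction, first on the number of accumulation endpoints of $\arc{x}$ and then on a finite combinatorial parameter (such as the number of $\tS$-arcs meeting $\arc{x}$), together with a careful choice of overarc at each stage ensuring strict decrease of the chosen measure.
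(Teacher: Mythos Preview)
Your proposal rests on a misreading of the definition of $w$-simple-minded system that invalidates both (1) $\Rightarrow$ (3) and (2) $\Rightarrow$ (1). Condition (d) in the definition requires that $\bigcup_{n}(\sX)_n^\oplus = \Tw$ for $\sX = \bigcup_{i=w+1}^{0}\Sigma^i\sS$, i.e.\ the extension closure of the union of \emph{$|w|$ shifted copies} of $\sS$ is all of $\Tw$. It does \emph{not} say $\extn{\sS}=\Tw$. For $w\leq -2$ the subcategory $\extn{\sS}$ is a proper subcategory of $\Tw$: since $\sS$ is $w$-orthogonal, its extension closure is obtained by taking only Ptolemy arcs of class II (neighbouring arcs at distance $1$), and the $|w|-1\geq 1$ inner-isolated vertices of each arc in $\tS$ prevent this closure from reaching most admissible arcs. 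So your ``essentially formal'' argument for (1) $\Rightarrow$ (3) fails, and your reduction of (2) $\Rightarrow$ (1) to ``$\extn{\sS}=\Tw$'' is attacking the wrong statement. (Only when $w=-1$ do $\sX$ and $\sS$ coincide.)

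A second issue: the $\infty$-gon here is simply $\bZ$; there are no accumulation points and no arcs with infinite endpoints, so the ``principal difficulty'' you anticipate does not arise, and the two-step induction you sketch is aimed at a nonexistent obstacle.

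The paper proceeds quite differently. For (1) $\Rightarrow$ (2) it assumes an outer-arc $\alpha$ exists and shows by induction on $n$ that no arc in $(\tX)_n$ (with $\tX=\bigcup\Sigma^i\tS$) can have target $s(\alpha)$ or be a suitable overarc of $s(\alpha)$, contradicting condition (d). For (2) $\Rightarrow$ (1) it shows that every admissible arc of minimal length lies in some $(\tX)_n$ by explicitly stringing together shifted copies $\Sigma^{k_i}\arc{a}_i$ of the inner-arcs of a covering arc. For (2) $\Leftrightarrow$ (3) it does not build approximations directly at all: it invokes the criterion from \cite{CSP} that a subcategory is contravariantly (resp.\ covariantly) finite iff every left (resp.\ right) fountain is a fountain, and then analyses fountains in $\extn{\tS}$ combinatorially, showing that an outer-arc forces a one-sided fountain while the absence of outer-arcs rules one-sided fountains out.
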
  

It is interesting to note that $(2)$ and $(3)$ are also a feature of $(w-1)$-cluster tilting subcategories in the positive CY case; cf. \cite[Lemma 3.4]{HJ2}. It was not known thus far whether the notions of maximal systems of stable orthogonal bricks and simple-minded systems coincided (cf. \cite{Dugas12}). As a byproduct of our combinatorial classification, we can conclude that these notions do not coincide.  

We shall then use the classification of $w$-Hom configurations, $w$-Riedtmann configurations and $w$-simple-minded systems to study their mutation theory, establishing an analogue to the mutation theory described in \cite{HJ2}. The first similarity is that one is able to replace one indecomposable by another indecomposable object. This mutation theory is described more precisely in the following theorems. 

\begin{introtheorem} [see Corollary \ref{cor:replacementssms}]
Let $w \leq -1$, $\sS$ be a $w$-simple-minded system or a $w$-Riedtmann configuration with $|w|-1$ outer-isolated vertices in $\Tw$, and $s \in \sS$ be indecomposable. Then $\sS$ can be mutated at $s$ in precisely $|w|-1$ ways.   
\end{introtheorem}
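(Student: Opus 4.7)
My plan is to leverage the combinatorial model developed in \cite{CS3} together with Theorem A in order to reduce the mutation problem to a counting question about admissible arcs on the $\infty$-gon. Let $\tS$ be the set of admissible arcs corresponding to $\sS$, and let $\arc{a} \in \tS$ be the arc corresponding to the indecomposable $s$. A mutation of $\sS$ at $s$ is a choice of indecomposable $s^* \neq s$ (corresponding to an admissible arc $\arc{a}^*$) such that $\tS^* := (\tS \setminus \{\arc{a}\}) \cup \{\arc{a}^*\}$ is still of the same type (a $w$-simple-minded system or a $w$-Riedtmann configuration with $|w|-1$ outer-isolated vertices, respectively). The goal is therefore to enumerate these valid $\arc{a}^*$.

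The first step is to identify the candidate arcs. The $w$-Hom-free and $w$-Ext-free constraints force the endpoints of $\arc{a}^*$ to sit in specific local neighbourhoods of the endpoints of $\arc{a}$, determined by the neighbouring arcs in $\tS$. Using the explicit computation of middle terms of extensions in $\Tw$ from \cite{CSP}, I would show that the admissible candidates are naturally parametrised by a discrete set of ``shifts'' of $\arc{a}$, sitting in a window of width controlled by $|w|$.

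The second step is to impose the extra constraints from Theorem A (for the SMS case) and the outer-isolated-vertex count (for the Riedtmann case). For the SMS case, I would verify, using the equivalence of Theorem A, that $\arc{a}^*$ admits an overarc in $\tS^*$ and, crucially, that the overarc structure of the remaining arcs is preserved after the replacement; for the Riedtmann case, I would check that the number of outer-isolated vertices is unchanged. In both cases, the additional requirement eliminates exactly one candidate from the window, leaving precisely $|w|-1$ valid mutations.

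The main obstacle is the verification in the second step: ensuring that the overarc / outer-isolated-vertex structure transforms predictably under the replacement $\arc{a} \leadsto \arc{a}^*$, rather than being destroyed at some other arc of $\tS$. I expect this to reduce to a local case analysis of the arcs adjacent to $\arc{a}$, using the explicit combinatorial descriptions and the middle-term computation from \cite{CSP}, and to rely on a more detailed mutation theorem (stated earlier in this section of the paper) which explicitly describes the set of possible replacements; the corollary then just extracts the cardinality.
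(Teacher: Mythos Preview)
Your high-level plan is correct and matches the paper: the result is a direct corollary of the detailed mutation proposition (Proposition~\ref{prop:mutationsT-m}), which is proved by a purely combinatorial enumeration of admissible arcs completing $\tS\setminus\{\arc{a}\}$ to a $w$-Hom configuration. However, your explanation of \emph{why} the count is $|w|-1$ contains a genuine misconception.

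In the paper, Proposition~\ref{prop:mutationsT-m} shows that the number of completions $\arc{a}^*$ of $\tS\setminus\{\arc{a}\}$ to a $w$-Hom configuration is $|w|$ if $\arc{a}$ has an overarc, and $k+1$ if $\arc{a}$ is an outer-arc, where $k$ is the number of outer-isolated vertices. In both of your cases (an SMS, where every arc has an overarc; or a Riedtmann configuration with $k=|w|-1$ outer-isolated vertices) this gives exactly $|w|$ completions. The ``$-1$'' then comes simply from the definition of mutation, which requires $s^*\neq s$: one of the $|w|$ completions is $\arc{a}$ itself. It does \emph{not} come from imposing the SMS or Riedtmann-type constraint and discarding a bad candidate, as you suggest in your second step. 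In fact the type is automatically preserved: the number of outer-isolated vertices is invariant under mutation (this is the content of the Remark following Proposition~\ref{prop:mutationsT-m}), and in the SMS case the overarc structure survives because any arc that had $\arc{a}$ as its smallest overarc still sits under the smallest overarc of $\arc{a}$, which remains in $\tS^*$.

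Two further remarks. First, the middle-term computations from \cite{CSP} are not used at all for this count; Proposition~\ref{prop:mutationsT-m} is proved by elementary arc combinatorics (locating the $2|w|$ isolated vertices under the smallest overarc of $\arc{a}$, or the relevant outer-isolated vertices, and checking which pairs give admissible non-crossing arcs). Second, your worry about the overarc structure being ``destroyed at some other arc'' is legitimate to raise but dissolves immediately once you observe that removing $\arc{a}$ can only affect arcs lying \emph{under} $\arc{a}$, and these are all still covered by the overarc of $\arc{a}$.
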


\begin{introtheorem} [see Corollary \ref{cor:replacementsl}]
Let $w \leq -1$ and $0 \leq l \leq |w|-2$. Then there exists a $w$-Riedtmann configuration $\sS_l$ of $\Tw$ with an indecomposable object $s$ such that $\sS_l$ can be mutated at $s$ in precisely $l$ ways. 
\end{introtheorem}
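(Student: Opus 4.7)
The plan is to give an explicit geometric construction using the combinatorial model of $\Tw$ developed in \cite{CS3}, in which indecomposable objects of $\Tw$ correspond to admissible arcs of an $\infty$-gon and $w$-Riedtmann configurations correspond to maximal collections of pairwise non-crossing admissible arcs satisfying the conditions of that paper. The key fact I would rely on, which is set up in the mutation analysis preceding this statement, is that the number of ways to mutate a $w$-Riedtmann configuration $\sS$ at an indecomposable summand $s$ is equal to the number of outer-isolated vertices associated with the arc corresponding to $s$. This reduces the proof to constructing, for each $l$ in the stated range, a $w$-Riedtmann configuration $\sS_l$ whose distinguished summand has exactly $l$ outer-isolated vertices.

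Concretely, I would fix a distinguished admissible arc $s$ in the $\infty$-gon and label its $|w|-1$ candidate outer-isolated positions. For each $l$, assemble a local family near $s$ consisting of $s$ together with $|w|-1-l$ arcs, each of which blocks one of the candidate outer-isolated positions (so that exactly $l$ positions remain outer-isolated at $s$). Then extend this local family to a full maximal admissible non-crossing collection $\tS_l$ on the remainder of the $\infty$-gon, for instance by completing with a standard fountain or with the Riedtmann configuration supplied by Theorem B on a region disjoint from the neighbourhood of $s$. The set $\sS_l$ corresponding to $\tS_l$ under the classification of \cite{CS3} is then the required configuration.

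The main obstacle will be the verification step: one must check that (i) the completed family $\tS_l$ satisfies the combinatorial characterisation of a $w$-Riedtmann configuration (non-crossing, admissibility, and the appropriate maximality condition), and (ii) none of the chosen blocking arcs, nor any arc introduced by the global completion, accidentally blocks one of the $l$ positions intended to remain outer-isolated at $s$. The boundary case $l=0$ is the most delicate, since then every outer-isolated position at $s$ must be simultaneously blocked while preserving admissibility and non-crossing; once this case is handled, intermediate values of $l$ are obtained by successively removing blocking arcs and re-adjusting the completion to maintain maximality. The case $l=|w|-2$ is the easiest, involving only a single blocking arc adjacent to $s$, so one can use it as a base case and interpolate downwards to $l=0$.
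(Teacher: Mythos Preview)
Your proposal rests on a misreading of the mutation count. The relevant fact from Proposition~\ref{prop:mutationsT-m} is not that the number of mutations at $s$ equals ``the number of outer-isolated vertices associated with the arc $s$''; rather, the dichotomy is global: if $\arc{s}$ has an overarc in $\tS$ then $s$ always has exactly $|w|-1$ replacements, while if $\arc{s}$ is an outer-arc then $s$ has exactly $k$ replacements, where $k$ is the \emph{total} number of outer-isolated vertices in $\tS$. There are no ``$|w|-1$ candidate outer-isolated positions'' attached to a single arc $s$ that one can block one by one; the inner-isolated vertices of $s$ (of which there are indeed $|w|-1$) are never outer-isolated, since $s$ itself is their overarc. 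So the local blocking construction you describe does not interact with the mutation count in the way you claim.

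Once the correct statement is in hand, the proof is one line, as in the paper: choose any $w$-Hom configuration $\sS_l$ with exactly $l$ outer-isolated vertices (such configurations exist for each $0\leq l\leq |w|-1$ by Theorems~\ref{thm:Hom-config} and~\ref{thm:Riedtmann}; for instance, tile $\bZ$ by minimum-length arcs and delete arcs so as to leave exactly $l$ gaps), and take $s$ to be any outer-arc. Proposition~\ref{prop:mutationsT-m}(3) then gives exactly $l$ replacements. There is no delicate boundary case, no need for fountains (which do not occur in $w$-Hom configurations anyway), and no inductive interpolation. Your reference to ``the Riedtmann configuration supplied by Theorem~B'' is also circular, since that is the statement you are proving.
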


\begin{introtheorem} [see Corollary \ref{cor:replacementsriedtmann}]
Let $w \leq -1$, $\sS$ be a $w$-Riedtmann configuration of $\Tw$ and $s \in \sS$ be indecomposable. Then $\sS$ can be mutated at $s$ in at most $|w|-1$ ways. 
\end{introtheorem}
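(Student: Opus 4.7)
The plan is to work in the geometric model of $\Tw$ from \cite{CS3}, under which a $w$-Riedtmann configuration $\sS$ corresponds to a set $\tS$ of admissible arcs in the $\infty$-gon, and an indecomposable $s \in \sS$ corresponds to a single arc $\arc{s} \in \tS$. A mutation of $\sS$ at $s$ is then a replacement of $\arc{s}$ by another admissible arc $\arc{s}'$ such that $(\tS \setminus \{\arc{s}\}) \cup \{\arc{s}'\}$ is again a $w$-Riedtmann configuration. So the task reduces to bounding the number of admissible arcs that can legitimately occupy the slot left by $\arc{s}$.

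First I would isolate the constraints that $\arc{s}'$ must satisfy relative to $\tS \setminus \{\arc{s}\}$. Using the combinatorial criterion for $w$-Riedtmann configurations recalled in the previous sections, the compatibility requirement between $\arc{s}'$ and the remaining arcs translates into the condition that $\arc{s}'$ lies in a bounded region of the $\infty$-gon determined by the two neighbours of $\arc{s}$ along the natural ``overarc/underarc'' structure. In particular, one endpoint of $\arc{s}'$ is forced and the other can move only within a finite window dictated by the $w$-CY duality and the positions of the nearby arcs of $\tS$.

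Next I would show that this window contributes at most $|w|-1$ candidate arcs. The ``full'' case, when $\arc{s}$ does have an overarc inside $\tS$, is already handled by Corollary \ref{cor:replacementssms} for the sub-configuration generated by $s$ and its overarc neighbour, yielding exactly $|w|-1$ replacements. When $\arc{s}$ lacks an overarc or some further structure in $\tS$, the set of admissible replacements is a proper subset of the $|w|-1$ positions that would appear in the simple-minded case: every additional missing overarc removes further candidate arcs from the window, because each such replacement would create an extension or a non-orthogonality obstruction forbidden by the $w$-Riedtmann condition. Formally one verifies this by checking, for each potential replacement position, whether it violates the vanishing of $\Hom(\sS, \Sigma^i \sS)$ for $-w+1 \leq i \leq -1$, using the middle term computations of \cite{CSP}.

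The main obstacle is the last step, namely certifying that the $|w|-1$ combinatorial positions are indeed an upper bound, not merely in the $w$-simple-minded setting but for every $w$-Riedtmann configuration. The delicate point is that, when overarcs of $\arc{s}$ are missing from $\tS$, one must argue that no extra ``unexpected'' replacement becomes available outside the standard window; this is where one exploits the fact that $w$-Riedtmann configurations are still maximal with respect to orthogonality, so any candidate outside this window would, by the arc-extension description in \cite{CSP}, generate a forbidden non-trivial morphism with an arc of $\tS \setminus \{\arc{s}\}$, contradicting the $w$-Riedtmann property. Once this is established, the bound $|w|-1$ follows immediately.
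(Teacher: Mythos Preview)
Your outline misses the mechanism that actually controls the bound in the outer-arc case, and several of the structural claims you make are not correct.

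First, the assertion that ``one endpoint of $\arc{s}'$ is forced and the other can move only within a finite window'' is false: in both the overarc and non-overarc cases the admissible replacements $(x_{|w|+i},x_i)$ move \emph{both} endpoints simultaneously through a family of isolated vertices (see the explicit list in the proof of Proposition~\ref{prop:mutationsT-m}). Second, when $\arc{s}$ has no overarc, the candidate replacements are \emph{not} a subset of the $|w|-1$ positions from the overarc case; they form an entirely different family, namely arcs connecting inner-isolated vertices of $\arc{s}$ to the outer-isolated vertices of $\tS$. So the heuristic ``each missing overarc removes a candidate from the window'' does not describe what actually happens. Third, invoking Corollary~\ref{cor:replacementssms} on ``the sub-configuration generated by $s$ and its overarc neighbour'' is not valid: that corollary applies to $w$-simple-minded systems, and a local piece of a Riedtmann configuration is not one. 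What you need for the overarc case is Proposition~\ref{prop:mutationsT-m}(2) directly, which already works at the level of $w$-Hom configurations.

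The paper's argument is much more direct and rests on a single combinatorial invariant you do not mention: the number $k$ of outer-isolated vertices in $\tS$. Proposition~\ref{prop:mutationsT-m} shows that for any $w$-Hom configuration the number of replacements of $s$ is exactly $|w|-1$ if $\arc{s}$ has an overarc, and exactly $k$ otherwise. Since every $w$-Riedtmann configuration is a $w$-Hom configuration, and Theorem~\ref{thm:Riedtmann} characterises $w$-Riedtmann configurations among $w$-Hom configurations precisely by the condition $k \leq |w|-1$, the bound follows immediately. The whole point is that the Riedtmann condition enters only through this bound on $k$, not through any orthogonality or extension argument of the kind you sketch in your last paragraph.
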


Comparing these theorems with those in \cite{HJ2}, we notice that, for $w \leq -2$, $w$-simple-minded systems in $\Tw$ have exactly the same mutation behaviour as that of $(|w|-1)$-cluster tilting subcategories of $\sT_{|w|}$, and $w$-Riedtmann configurations in $\Tw$ have exactly the same mutation behaviour as that of weakly $(|w|-1)$-cluster tilting subcategories of $\sT_{|w|}$. There is, however, another class of objects in $\Tw$ which is disjoint to the class of $w$-simple-minded systems and yet has the same `nice' mutation behaviour, in the sense that every indecomposable object has a fixed number of replacements given in terms of the CY dimension. However, there is a fundamental difference between these two classes of objects, namely functorial finiteness of their extension closure, as stated in Theorem \ref{introthm:sms}. This is also the difference between weakly $(|w|-1)$-cluster tilting subcategories and $(|w|-1)$-cluster tilting subcategories. Hence, it is our belief that $w$-simple-minded systems should be seen as the natural analogue of $(|w|-1)$-cluster tilting subcategories. 

We finish the study of mutations by realising the mutation behaviour of $w$-simple-minded systems in terms of approximation theory, using the functorial finiteness property of their extension closure. We shall see that the mutation theory defined for simple-minded systems (cf. \cite{Dugas12}, see also \cite{KY}) can be applied to $w$-simple-minded systems in an iterative way, in order to obtain the $|w|-1$ replacements of a given indecomposable object. Again, we point out the similarities with the mutation theory of $(|w|-1)$-cluster tilting objects in $(|w|-1)$-cluster categories \cite{Zhu08}.  

\section{Definitions and basic properties}

In this section we give the definition of the categories and classes of objects studied in this paper. We also collect the main known results on these categories and objects, which will be used later. Given a category $\sC$, we shall denote the collection of (isomorphism classes of) indecomposable objects by $\ind{\sC}$. We always assume that every subcategory of $\sT$ is full and contains the zero object.

\subsection{The categories}
Let $\kk$ be an algebraically closed field. Let $\sT$ be a $\kk$-linear triangulated category and $w \in \bZ$.  A {\it $w$-spherical object} $s$ of $\sT$ is an object satisfying the following conditions:
\[
\Hom_{\sT}(s, \Sigma^i s) = 
\left\{
\begin{array}{ll}
\kk          & \text{if } i=0,w; \\
0            & \text{otherwise, and}
\end{array}
\right.
\]
\[
\Hom_{\sT}(s,t) \simeq D\Hom_{\sT}(t,\Sigma^w s), \text{ for all $t \in \sT$}.
\]
Here, $D(-)\coloneqq \Hom_{\kk}(-,\kk)$ denotes the usual vector space duality. 

An object $s$ \emph{generates} $\sT$ if $\thick{\sT}{s} = \sT$, i.e. the smallest triangulated subcategory containing $s$ that is also closed under direct summands is $\sT$.

Let $\Tw$ be a $\kk$-linear triangulated category that is idempotent complete and generated by a $w$-spherical object. It is known that $\Tw$ is unique up to triangle equivalence \cite[Theorem 2.1]{KYZ}. The category $\Tw$ is Hom-finite and Krull-Schmidt. Moreover, $\Tw$ is  $w$-CY, i.e. $\SSS \simeq \Sigma^w$, where $\SSS$ denotes the Serre functor. In other words, we have a bifunctorial isomorphism:
\[
\Hom_{\Tw} (x,y) \simeq D\Hom_{\Tw} (y,\Sigma^wx),
\]
for every object $x, y \in \Tw$.
 
It was seen in \cite[Proposition 1.10]{HJY} that the AR-quiver of $\Tw$ consists of $|w-1|$ copies of $\bZ A_\infty$. Given one copy, the others are obtained by applying $\Sigma, \ldots, \Sigma^{|w-1|-1}$.

\begin{proposition}[{\cite[Propositions 3.2 and 3.3]{HJY}}] \label{prop:hom-hammocks}
Let $x,y \in \ind{\Tw}$ for $w\neq 1$. Then
\[
\dim_{\kk} \Hom_{\Tw}(x,y) = 
\left\{
\begin{array}{ll}
1 & \text{if } y \in \homfrom{x} \cup \homto{\SSS x}, \\
0 & \text{otherwise.}
\end{array}
\right.
\]
\end{proposition}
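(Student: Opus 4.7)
The plan is to reduce the statement to a hammock computation in $\bZ A_\infty$ using the AR-structure of $\Tw$, with the defining Hom-spaces of the spherical object providing the base case, and then propagate via AR-triangles.

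First, I would compute $\Hom_{\Tw}(s, \Sigma^i s)$ directly: by the defining property of a $w$-spherical object this equals $\kk$ for $i \in \{0,w\}$ and vanishes otherwise. Since by \cite[Proposition 1.10]{HJY} the AR-quiver of $\Tw$ consists of $|w-1|$ copies of $\bZ A_\infty$ permuted by $\Sigma, \ldots, \Sigma^{|w-1|-1}$, the object $s$ can be placed on the boundary of one component, and its shifts $\Sigma^i s$ are located on the boundaries of the various copies.

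The key inductive tool is the AR-translate. Since $\Tw$ is $w$-CY, we have $\tau = \SSS \Sigma^{-1} = \Sigma^{w-1}$, so $\tau$ acts as a translation of the AR-quiver. For any indecomposable $y$ with AR-triangle
\[
\tau y \to e \to y \to \Sigma \tau y,
\]
applying $\Hom_{\Tw}(s,-)$ yields a long exact sequence relating $\Hom(s,y)$, $\Hom(s,e)$ and $\Hom(s,\tau y)$. Starting from $s$ and its suspensions and knitting outward along the AR-quiver, one inductively computes $\dim_{\kk} \Hom_{\Tw}(s,y)$ for every indecomposable $y$, and checks that its support is precisely the forward wedge $\homfrom{s}$ together with the backward wedge $\homto{\SSS s}$, with value $1$ throughout and $0$ elsewhere.

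For a general indecomposable $x$, I would use that every indecomposable of $\Tw$ is of the form $\Sigma^a \tau^b s$ for some integers $a,b$ (as $s$ generates $\Tw$ and the AR-quiver is $\bZ A_\infty$-shaped), so the computation reduces to the previous case by translation of hammocks via the autoequivalences $\Sigma$ and $\tau$. Alternatively, Serre duality $\Hom_{\Tw}(x,y) \cong D\Hom_{\Tw}(y,\SSS x)$ gives the symmetric description, showing that the backward hammock $\homto{\SSS x}$ is the Serre-dual of a forward hammock and must also contribute the one-dimensional Hom-spaces. The main technical obstacle is the combinatorial bookkeeping in the long exact sequences near the boundaries where the two wedges $\homfrom{x}$ and $\homto{\SSS x}$ meet or pass through each other: one must verify that connecting maps are non-zero precisely where needed so that the induction yields exactly the predicted one-dimensional (or zero) values, without spurious contributions. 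Once this is checked on the walls of the hammocks, the rest of the propagation is forced by the mesh relations of the AR-quiver.
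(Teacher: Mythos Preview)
The paper does not prove this proposition at all: it is stated with a citation to \cite[Propositions 3.2 and 3.3]{HJY} and no argument is given. So there is nothing in the present paper to compare your proposal against; the authors simply import the result.

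Your sketch is the standard knitting approach and is broadly how such hammock computations are carried out. One genuine error, however: you claim that ``every indecomposable of $\Tw$ is of the form $\Sigma^a \tau^b s$''. Since $\Tw$ is $w$-CY you correctly note $\tau = \Sigma^{w-1}$, but then $\Sigma^a\tau^b s = \Sigma^{a+b(w-1)} s$ is just a shift of $s$, hence still lies on the mouth of some $\bZ A_\infty$ component. The indecomposables higher up in the AR-quiver are not of this form. To pass from the case $x=s$ to a general indecomposable $x$ you cannot simply translate by autoequivalences; you must knit again, this time in the first variable, using AR-triangles $\tau x \to e \to x \to \Sigma\tau x$ and the contravariant long exact sequence for $\Hom(-,y)$, with the already-computed values $\Hom(\Sigma^i s, y)$ (equivalently, via Serre duality, $\Hom(y,\Sigma^{i+w}s)$) as the base case on the boundary. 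With that correction the outline is sound, though as you acknowledge the delicate part is controlling the connecting maps where the two wedges interact.
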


We refer the reader to~\cite[Subsection 2.2.]{CSP} for the notions of {\it forward hammock} $\homfrom{x}$ and {\it backward hammock} $\homto{x}$ of an object $x$. 

\subsection{The objects}
Let $\sT$ be a $\kk$-linear triangulated category, with Serre functor $\SSS$, and let $w \leq 0$.  

A subcategory $\sX$ is {\it extension-closed} if, for every triangle $\tri{a}{b}{c}$ with $a, c \in \sX$, we have $b \in \sX$.

Given subcategories $\sX, \sY$ of $\sT$, we set up the following notation: 
\begin{compactitem}
\item $\sX * \sY \coloneqq \{t \in \sT \mid \exists \ \tri{x}{t}{y} \text{ with } x \in \sX \text{ and } y \in \sY\}$.
\item $\sX^\oplus$ is the smallest full subcategory of $\sT$ containing $\sX$ which is closed under summands.
\item $(\sX)_1 \coloneqq \sX$ and $(\sX)_n \coloneqq  (\sX * (\sX)_{n-1})^\oplus$, for $n \geq 2$.
\item $\extn{\sX}$ is the smallest extension-closed full subcategory of $\sT$ containing $\sX$. 
\item Given $i \in \bZ$, $\sX^{\perp_i} \coloneqq \{ y \in \sT \mid \Ext^i (x,y) = 0, \forall x \in \sX\}$. Similarly, $\,^{\perp_i} \sX \coloneqq \{ y \in \sT \mid \Ext^i (y,x) = 0, \forall x \in \sX\}$.
\end{compactitem}

\begin{definition}[{\cite{CS3}}]\label{def:m-Hom-config}
A collection $\sS$ of indecomposable objects in $\sT$ is called a \textit{$w$-Hom configuration} if it satisfies the following conditions:
\begin{compactitem}
\item $s \in \sS \Leftrightarrow \begin{cases} 
      s \in (\sS)^{\perp_i}, \text{ for } i = w+1, \ldots, -1 & \text{and } \\
      s \in (\sS \setminus \{s\})^{\perp_i} , \text{ for } i = w, 0.
\end{cases}$
\item $s \in \sS \Leftrightarrow \begin{cases} 
s \in \,^{\perp_i} (\sS), \text{ for } i = w+1, \ldots, -1 & \text{and } \\
s \in \,^{\perp_i} (\sS \setminus \{s\}), \text{ for } i = w, 0.
\end{cases}$
\end{compactitem}
\end{definition}

If $\sT$ is $w$-CY then the two conditions in the definition above are equivalent. 

\begin{definitions} Let $\sS = \{s_i \mid i \in I \}$ be a set of indecomposable objects of $\sT$. 
\begin{compactenum}
\item $\sS$ is \textit{$w$-orthogonal} if it satisfies the following conditions:
 \begin{compactenum}
  \item[(a)] $\Hom (s_i, s_j) = \delta_{ij} \kk$,
  \item[(b)] If $w \leq -2$, $\Ext^k (s_i,s_j) = 0$, for $w+1 \leq k \leq -1$ and $i, j \in I$.
 \end{compactenum}
\item $\sS$ is a \textit{left (right, resp.) $w$-Riedtmann configuration} if it is a $w$-orthogonal set satisfying:
 \begin{compactenum}
  \item [(c)] For every $x \in \sT$, there are $i \in I$ and $w+1 \leq k \leq 0$ such that $\Ext^k (s_i,x) \neq 0$ ($\Ext^k (x,s_i) \neq 0$, resp.).
 \end{compactenum}
\item $\sS$ is a \textit{$w$-Riedtmann configuration} if it is both a left and right $w$-Riedtmann configuration. 
\item $\sS$ is a \textit{$w$-simple-minded system} if it is a $w$-orthogonal set satisfying:
 \begin{compactenum}
  \item [(d)] $\bigcup\limits_{n \geq 0} (\sX)_n^\oplus = \sT, \text{ where } \sX \coloneqq \bigcup\limits_{i=w+1}^0 \Sigma^i \sS$.
 \end{compactenum}
\end{compactenum}
\end{definitions} 

For brevity we shall sometimes refer to $(-1)$-orthogonal sets as simply orthogonal sets. 

\begin{lemma}[{\cite[Lemmas 2.2, 2.3 and 2.7]{Dugas12}}]\label{lem:closedsummands}
Let $\sX$ be a subcategory of $\sT$ and $n \geq 1$.
\begin{compactenum}
\item $(\sX)_n = ((\sX)_{n-1} * \sX)^\oplus$, and $\extn{\sX}^\oplus = \bigcup\limits_{n \geq 1} (\sX)_n$.
\item If $\sX$ is orthogonal, then $(\sX)_n = \sX * (\sX)_{n-1} = (\sX)_{n-1} * \sX$, for every $n \geq 1$. In particular, $\extn{\sX}^\oplus = \extn{\sX}$.  
\end{compactenum}
\end{lemma}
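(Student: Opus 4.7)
The plan is to deduce both parts from general manipulations of the $*$-operation, invoking orthogonality only in part (2).

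First I would establish the auxiliary identity $(\sA * \sB)^\oplus = (\sA^\oplus * \sB^\oplus)^\oplus$ for any subcategories $\sA, \sB$ of $\sT$: the nontrivial inclusion is obtained by adjoining the missing complementary summands to both ends of a given triangle and taking direct sums with the corresponding identity triangles. Combined with the associativity of $*$ (a direct consequence of the octahedral axiom), a short induction on $n$ yields the first assertion of (1):
\[
(\sX)_n = (\sX * (\sX)_{n-1})^\oplus = ((\sX)_{n-1} * \sX)^\oplus.
\]
For the second assertion of (1), the inclusion $(\sX)_n \subseteq \extn{\sX}^\oplus$ is immediate by induction on $n$, since $\extn{\sX}^\oplus$ is extension-closed and contains $\sX$. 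For the reverse inclusion I would check that $\bigcup_{n \geq 1} (\sX)_n$ is extension-closed, which amounts to proving $(\sX)_m * (\sX)_n \subseteq (\sX)_{m+n}$. Given a triangle $\tri{a}{t}{c}$ with $a \in (\sX)_m$ and $c \in (\sX)_n$, I would pick a complementary summand $a'$ so that $a \oplus a'$ fits in a triangle $\tri{x}{a \oplus a'}{b}$ with $x \in \sX$ and $b \in (\sX)_{m-1}$, form the triangle $\tri{a \oplus a'}{t \oplus a'}{c}$ (direct sum of $\tri{a}{t}{c}$ with the identity triangle on $a'$), and then apply the octahedral axiom to the composition $x \to a \oplus a' \to t \oplus a'$. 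This produces an object $\widetilde t$ with triangles $\tri{x}{t \oplus a'}{\widetilde t}$ and $\tri{b}{\widetilde t}{c}$, and induction on $m$ places $\widetilde t \in (\sX)_{m+n-1}$. Hence $t \oplus a' \in \sX * (\sX)_{m+n-1}$, and therefore $t \in (\sX)_{m+n}$.

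For part (2), in view of part (1) it suffices to prove that when $\sX$ is orthogonal the sets $\sX * (\sX)_{n-1}$ and $(\sX)_{n-1} * \sX$ are already closed under direct summands. I would argue by induction on $n$, the base case being immediate. For the inductive step, given a triangle $\tri{x}{t_1 \oplus t_2}{y}$ with $x \in \sX$ and $y \in (\sX)_{n-1}$, the orthogonality condition $\Hom(s_i, s_j) = \delta_{ij} \kk$ serves as a Schur-type constraint that, once one decomposes $x$ and $y$ into their indecomposable constituents, forces the connecting morphism $y \to \Sigma x$ to respect the decomposition $t_1 \oplus t_2$; combined with the inductive hypothesis applied to the $(\sX)_{n-1}$-part $y$, this lets one split the given triangle into two triangles exhibiting each $t_i$ as an object of $\sX * (\sX)_{n-1}$. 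I expect this compatibility step to be the main obstacle, as it requires carefully tracking morphisms between sums of objects of $\sX$ and using orthogonality to eliminate off-diagonal components in a way that is coherent with the induction. Once summand-closure of $\sX * (\sX)_{n-1}$ is established, the equality $(\sX)_n = \sX * (\sX)_{n-1}$ follows from part (1), the symmetric equality $(\sX)_n = (\sX)_{n-1} * \sX$ is analogous, and $\extn{\sX}^\oplus = \extn{\sX}$ is immediate: $\extn{\sX}^\oplus = \bigcup_n (\sX)_n = \bigcup_n \sX * (\sX)_{n-1}$ is contained in $\extn{\sX}$ by a straightforward induction using extension-closure, while the reverse inclusion is trivial.
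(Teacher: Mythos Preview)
The paper does not supply a proof of this lemma; it is quoted with a citation to \cite[Lemmas 2.2, 2.3 and 2.7]{Dugas12} and then used as input. There is no in-paper argument to compare your proposal against.

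On the substance of your sketch: part (1) follows the standard route and is fine. For part (2) you correctly isolate the crux---that $\sX * (\sX)_{n-1}$ is already summand-closed when $\sX$ is orthogonal---and you honestly flag the ``compatibility step'' as the obstacle. But note that under the paper's conventions an orthogonal $\sX$ is a set of \emph{indecomposable} objects (together with zero), so in your triangle $\tri{x}{t_1 \oplus t_2}{y}$ the object $x \in \sX$ is itself indecomposable; there is nothing to ``decompose into indecomposable constituents'' on that side, and the splitting mechanism you describe cannot start. The argument in \cite{Dugas12} instead passes through $\add(\sX)$ and uses that an orthogonal $\sX$ behaves like a set of simple objects: one produces, for each object of $(\sX)_n$, a canonical length-$n$ filtration with factors in $\add(\sX)$ (via iterated minimal $\add(\sX)$-approximations), and additivity of this construction on direct sums yields summand-closure. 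If you want to reconstruct part (2) independently, that is the direction to take rather than a direct triangle-splitting.
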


When $w = -1$, we recover the notions of a system of orthogonal bricks (=$(-1)$-orthogonal set) and a maximal system of orthogonal bricks (=$(-1)$-Riedtmann configuration). We also recover the notion of simple-minded system when $w=-1$, by Lemma \ref{lem:closedsummands}. Moreover, condition (d) means that the smallest subcategory of $\sT$ containing $\bigcup\limits_{i=w+1}^{0} \Sigma^i \sS$ which is closed under extensions and summands is $\sT$ itself. 

In the context of stable module categories of selfinjective categories, an additional requirement that a Riedtmann configuration or a simple-minded system $\sS$ is stable under $\SSS \Sigma$ is imposed. The natural analogue for a $w$-Riedtmann configuration or $w$-simple-minded system would be to impose stability under $\SSS \Sigma^{-w}$. However, since we are considering these objects in $w$-CY categories, this condition is redundant. 

\subsection{Combinatorial model} Here we recall the natural extension of the combinatorial model for $\Tw$ from \cite{HJ2} in the case $w\geq 2$ to the case $w \leq -1$. This will be in terms of `arcs/diagonals of the $\infty$-gon': each pair of integers $(t,u)$ is viewed as an arc connecting the integers $t$ and $u$ arranged on a number line.

Set $d \coloneqq w-1$. A pair of integers $(t,u)$ is called a \emph{$d$-admissible arc} if one has $u-t \leq w$ and $u-t \equiv 1 \modulo d$.

The \emph{length} of the arc $\arc{a} \coloneqq (t,u)$ is $|u-t|$, and it is denoted by $l (\arc{a})$. If $\arc{a}=(t,u)$ then the \emph{starting point} is $s(\arc{a}):=t$ and the \emph{ending point} or \emph{target} is $t(\arc{a}):=u$.

When $d$ is clear from context we refer to $d$-admissible arcs simply as \emph{admissible arcs}. We shall identify (isoclasses of) indecomposable objects in $\Tw$ with the corresponding admissible arcs, and we shall freely switch between objects and arcs. We will use Roman typeface for the indecomposable objects of $\Tw$ and typewriter typeface for the corresponding arcs. 

Figure~\ref{fig:AR-quiver2} shows how admissible arcs correspond to the indecomposable objects of $\Tw$. The action of the suspension, AR translate and Serre functor are given by
\[
\Sigma(t,u) = (t-1,u-1) \quad
\tau(t,u) = (t-d,u-d) \quad
\SSS(t,u) = (t-w,u-w).
\]

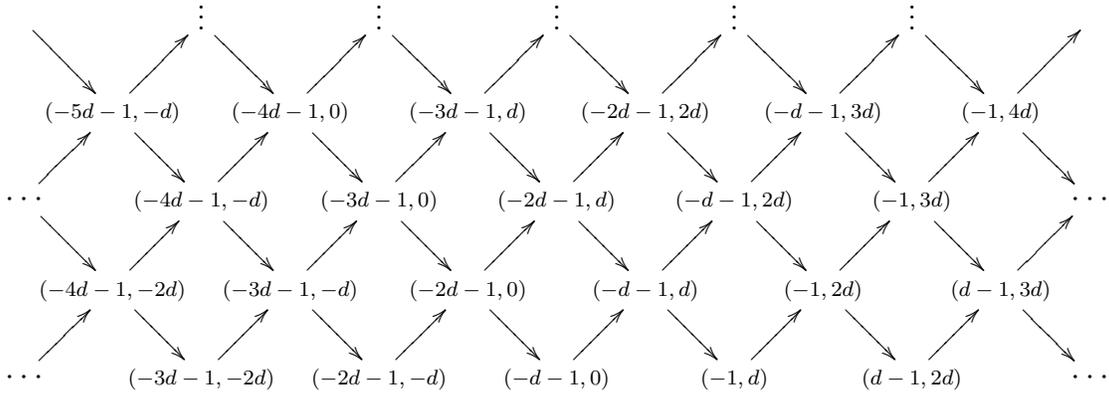
\begin{figure}[!ht]
\centerline{
\xymatrix@!=0.8pc{
\ar[dr]    &                      & \vdots \ar[dr]           &                   & \vdots \ar[dr]                  &          & \vdots \ar[dr] &                      & \vdots \ar[dr]       &                        & \vdots \ar[dr]         &                      &  \\
                        & \smxy{(-5d-1,-d)} \arr &                        & \smxy{(-4d-1,0)} \arr &                      & \smxy{(-3d-1,d)} \arr &                 & \smxy{(-2d-1,2d)} \arr &                        & \smxy{(-d-1,3d)} \arr  &                         & \smxy{(-1,4d)} \arr &                  \\
\cdots \arr        &                      & \smxy{(-4d-1,-d)}  \arr         &                   & \smxy{(-3d-1,0)}  \arr            &          & \smxy{(-2d-1,d)} \arr &                      & \smxy{(-d-1,2d)}  \arr     &                        & \smxy{(-1,3d)}  \arr    &                      & \cdots \\
                        & \smxy{(-4d-1,-2d)} \arr &                        & \smxy{(-3d-1,-d)} \arr &                      & \smxy{(-2d-1,0)} \arr &                 & \smxy{(-d-1,d)} \arr &                        & \smxy{(-1,2d)} \arr  &                         & \smxy{(d-1,3d)} \arr &                  \\
\cdots \ar[ur] &                      & \smxy{(-3d-1,-2d)} \ar[ur] &                   & \smxy{(-2d-1,-d)} \ar[ur] &          & \smxy{(-d-1,0)} \ar[ur]     &                     & \smxy{(-1,d)} \ar[ur] &                       & \smxy{(d-1,2d)} \ar[ur] &                      & \cdots
}}
\caption{\label{fig:AR-quiver2} A component of type $\bZ \Ainf$ with the endpoints of the $d$-admissible arcs described.}
\end{figure} 

\begin{definitions}
Let $\tX$ be a collection of admissible arcs in $\Tw$, $\arc{a} = (t,u), \arc{b} = (v,w) \in \tX$, and $x$ a vertex in the $\infty$-gon. 
\begin{compactenum}
\item The arc $\arc{b}$ is an \textit{overarc of $\arc{a}$} if $t(\arc{b}) < t(\arc{a}) < s(\arc{a}) < s(\arc{b})$. The arc $\arc{a}$ is called an \textit{innerarc of $\arc{b}$}. 
\item The arc $\arc{b}$ is an \textit{outer-arc} if it does not have any overarc in $\tX$.
\item The arcs $\arc{a}$ and $\arc{b}$ are \textit{crossing arcs} if they are incident with the same vertex or they {\it strictly cross}, i.e. either $u < w < t < v$ or $w < u < v < t$. 
\item The arc $\arc{b}$ is an \textit{overarc of $x$} if $t(\arc{b}) < x < s(\arc{b})$.
\item The vertex $x$ is an {\it inner-vertex} if there is an overarc of $x$ in $\tX$. If $\arc{b}$ is its smallest overarc, then $x$ is an {\it inner-vertex of $\arc{b}$}. 
\item The vertex $x$ is an \textit{isolated vertex} if it is not incident with any arc in $\tX$.
\end{compactenum}
We make the obvious definitions of inner-isolated vertex and inner-isolated vertex of an arc $\arc{b}$. If $v$ is isolated but not an inner-vertex, it will be called an {\it outer-isolated vertex}. 
\end{definitions}

\subsection{Some known classification results} 
For $w \leq -1$, $w$-Hom and $w$-Riedtmann configurations have been classified in \cite{CS3} in terms of admissible arcs.

\begin{theorem}[{\cite{CS3}\label{thm:Hom-config}}]\label{Homconfigclassification}
A set of admissible arcs $\tS$ is a $w$-Hom configuration in $\Tw$ if and only if the following conditions hold:
\begin{compactenum}
\item There are no crossings.
\item Any arc in $\tS$ has precisely $|w|-1$ inner-isolated vertices.
\item There are at most $|w|$ outer-isolated vertices. 
\end{compactenum}
\end{theorem}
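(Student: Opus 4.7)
The strategy is to translate each Ext-vanishing and non-vanishing condition in Definition~\ref{def:m-Hom-config} into an explicit combinatorial condition on admissible arcs, via the hammock description of Hom spaces from Proposition~\ref{prop:hom-hammocks}. Since $\Tw$ is $w$-CY, Serre duality shows that the two bullet points of Definition~\ref{def:m-Hom-config} are equivalent, so it is enough to work with one of them throughout.

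First I would give a closed-form description of $\dim_{\kk}\Hom_{\Tw}((t,u),\Sigma^i(t',u'))$ for $w \leq i \leq 0$ purely in terms of inequalities between the endpoints $t,u,t',u'$, using $\Sigma(t',u') = (t'-1,u'-1)$ and identifying the forward and backward hammocks appearing in Proposition~\ref{prop:hom-hammocks} as explicit regions of admissible arcs. With this endpoint formula in hand, condition~(1) is immediate from the orthogonality of $\sS$ with itself: for distinct $s,s' \in \sS$, the simultaneous vanishing of $\Hom(s,s')$ and $\Ext^w(s,s')$ is exactly the statement that the two corresponding arcs share no endpoint and do not strictly cross.

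Condition~(2) handles the intermediate shifts $w+1 \leq i \leq -1$, including the case $s = s'$. For a fixed $\arc{a} \in \tS$, vanishing of $\Ext^i(\arc{a},\arc{b})$ for all $\arc{b} \in \tS$ and all such $i$ translates, via the endpoint formula, into the statement that certain vertices sitting under $\arc{a}$ are not endpoints of any arc in $\tS$; tracing which shift produces which vertex gives the count of exactly $|w|-1$ inner-isolated vertices. The final condition~(3) encodes maximality: the ``$\Leftarrow$'' direction of Definition~\ref{def:m-Hom-config} requires that any admissible arc $\arc{c}$ satisfying all orthogonality conditions against $\tS$ already lies in $\tS$. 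I would argue that an admissible arc placed among outer-isolated vertices is automatically orthogonal to every arc of $\tS$ at all required shifts; hence having more than $|w|$ outer-isolated vertices allows one to produce such a new arc, contradicting maximality, while at most $|w|$ leaves no admissible arc ``free'' to be adjoined.

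The main obstacle will be condition~(2), which requires careful bookkeeping of how the shifts $\Sigma^i$ distribute arcs among the $|w-1|$ copies of $\bZ\Ainf$ making up the AR-quiver of $\Tw$, so that the combinatorial inner-isolated-vertex count matches the homological Ext-vanishing count exactly. Converting each required Ext vanishing into the corresponding constraint on arcs in $\tS$, subject to the admissibility condition $u - t \equiv 1 \pmod{w-1}$, demands a systematic enumeration; beyond this bookkeeping, however, the argument should reduce to a direct case analysis built on the Hom formula established in the first step.
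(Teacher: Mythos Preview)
This theorem is not proved in the present paper: it is quoted verbatim from \cite{CS3} in the subsection ``Some known classification results'' and carries no proof here. So there is no ``paper's own proof'' to compare your proposal against.

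That said, your outline is broadly the right shape for how such a classification is established, and it matches the spirit of the arguments in \cite{CS3}: one unwinds the hammock description of Proposition~\ref{prop:hom-hammocks} to convert each $\Ext^i$-vanishing into an endpoint inequality, and then reads off the arc conditions. One point to be careful about is your treatment of condition~(2). The word ``precisely'' encodes two inequalities, and they do not both come from the same side of the $\Leftrightarrow$ in Definition~\ref{def:m-Hom-config}. The orthogonality direction (the $\Rightarrow$) forces no crossings and hence constrains how inner-arcs can sit, but it does not by itself bound the number of inner-isolated vertices from above; that upper bound, like condition~(3), is a maximality statement and must come from the $\Leftarrow$ direction (if an arc had $\geq |w|$ inner-isolated vertices, an admissible arc could be inserted among them, orthogonal to all of $\tS$, contradicting maximality). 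Your sketch attributes both halves of condition~(2) to the intermediate $\Ext$-vanishings, which is not quite right. Similarly, the lower bound on inner-isolated vertices (at least $|w|-1$) is what actually follows from orthogonality plus the admissibility congruence $u-t \equiv 1 \pmod{w-1}$, and this is where the bookkeeping you flag as the main obstacle genuinely lives.
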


Note that any (left/right) $|w|$-Riedtmann configuration is a $|w|$-Hom configuration.

\begin{theorem}[{\cite{CS3}}]\label{thm:Riedtmann}
Let $\sS$ be $w$-Hom configuration in $\Tw$ and $\tS$ the corresponding collection of admissible arcs. The following assertions are equivalent:
\begin{compactenum}
\item $\sS$ is a left $w$-Riedtmann configuration.
\item $\sS$ is a right $w$-Riedtmann configuration.
\item There are at most $|w|-1$ outer-isolated vertices in $\tS$.
\end{compactenum}
\end{theorem}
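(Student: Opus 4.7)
The approach is to prove the three-way equivalence by showing that (3) is equivalent to each of (1) and (2) separately; the equivalence (1) $\iff$ (2) then follows (it can also be seen more directly from the $w$-CY duality $\Ext^k(s,x)\simeq D\Ext^{w-k}(x,s)$ combined with the Hom-configuration structure of $\sS$, but the route through (3) is cleaner because the ranges $\{w+1,\ldots,0\}$ in (1) and (2) are shifted by one under CY duality and the endpoints have to be reconciled). The essential tool is the translation of the $\Ext$-based Riedtmann conditions into a combinatorial statement in the arc model via Proposition~\ref{prop:hom-hammocks}: $\Hom(a,b)\neq 0$ iff $b\in\homfrom{a}\cup\homto{\SSS a}$. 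Using the explicit formulas $\Sigma(t,u)=(t-1,u-1)$ and $\SSS(t,u)=(t-w,u-w)$, the left Riedtmann condition becomes the purely combinatorial statement that every admissible arc $\arc{x}$ lies in
\[
\bigcup_{\arc{s}\in\tS}\bigcup_{k=w+1}^{0}\Sigma^{-k}\bigl(\homfrom{\arc{s}}\cup\homto{\SSS\arc{s}}\bigr),
\]
with the right condition obtained by reversing the roles.

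For $(3)\Rightarrow(1),(2)$, I would fix an admissible arc $\arc{x}$ and examine the $|w|$ suspensions $\arc{x},\Sigma^{-1}\arc{x},\ldots,\Sigma^{-(|w|-1)}\arc{x}$. Using the no-crossing property and the fact that each arc of $\tS$ has exactly $|w|-1$ inner-isolated vertices (Theorem~\ref{thm:Hom-config}), one can track which vertices along the $\infty$-gon are ``covered'' by the arcs of $\tS$; the assumption of at most $|w|-1$ outer-isolated vertices forces at least one of the $|w|$ shifts of $\arc{x}$ to meet $\tS$ in its forward or backward hammock, which gives the required non-vanishing Ext on both sides simultaneously.

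For the reverse direction $(1)\Rightarrow(3)$ (and symmetrically $(2)\Rightarrow(3)$), I would argue by contrapositive. Assume $\tS$ has $|w|$ outer-isolated vertices $v_1<\cdots<v_{|w|}$. The rigidity of a Hom configuration (no crossings, exactly $|w|-1$ inner-isolated vertices per arc) forces the $v_i$ to sit in a specific pattern at the ``top'' of $\tS$. I would construct an admissible arc $\arc{x}$ of minimal admissible length $|w|$, positioned so that its $|w|-1$ inner vertices coincide with $|w|-1$ of the $v_i$, and then verify via Proposition~\ref{prop:hom-hammocks} that none of the shifts $\Sigma^k\arc{x}$ for $k\in\{w+1,\ldots,0\}$ has a relevant hammock meeting $\tS$. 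Such an $\arc{x}$ then witnesses the failure of the Riedtmann condition.

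The main obstacle is this contrapositive step. Admissible arcs come in restricted lengths $|w|,\,2|w|+1,\,3|w|+2,\ldots$, so $\arc{x}$ cannot be placed arbitrarily; one must exploit the full rigidity of the Hom configuration to show that a valid placement exists once all $|w|$ outer-isolated vertices are available. Even once $\arc{x}$ is found, the verification that \emph{all} $|w|$ hammocks $\Sigma^k\arc{x}$ avoid $\tS$ requires a careful case analysis on how the $v_i$ interact with the outer arcs of $\tS$, again using the no-crossing property systematically.
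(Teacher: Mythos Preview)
The paper does not contain a proof of this theorem at all: it is stated with the citation \cite{CS3} and no argument is given. It is a background result imported from another paper, used here as input to the classification of $w$-simple-minded systems. There is therefore nothing in the present paper to compare your proposal against.

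That said, a brief comment on your sketch. The overall strategy (translating the Riedtmann condition into hammock combinatorics via Proposition~\ref{prop:hom-hammocks} and then arguing on outer-isolated vertices) is the natural one and is indeed how the result is approached in \cite{CS3}. Your contrapositive construction for $(1)\Rightarrow(3)$ is along the right lines, but the details you flag as ``the main obstacle'' are genuinely where the work lies: with $|w|$ outer-isolated vertices one must produce an admissible arc whose entire orbit $\{\Sigma^k x : w+1\le k\le 0\}$ avoids the relevant hammocks, and your description of how to place $\arc{x}$ (``so that its $|w|-1$ inner vertices coincide with $|w|-1$ of the $v_i$'') is not quite right, since the outer-isolated vertices of a $w$-Hom configuration need not be consecutive or even close together, so an arc of minimal length $|w|$ cannot in general have its inner vertices land on $|w|-1$ of them. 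The actual construction in \cite{CS3} is more delicate and uses the global structure of the configuration rather than a local placement of a short arc. If you want to reconstruct the argument, you should consult \cite{CS3} directly.
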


\subsection{Extensions in $\Tw$}
In \cite{CSP} a characterisation of extension-closed subcategories of $\Tw$ in terms of Ptolemy arcs was given.

\begin{definitions}
Let $\arc{a} = (t, u), \arc{b} = (v, w)$ be two admissible arcs of $\Tw$. 
\begin{compactenum}
\item If $\arc{a}$ and $\arc{b}$ are strictly crossing arcs, then the \textit{Ptolemy arcs of class I} associated to $\arc{a}$ and $\arc{b}$ are the remaining four arcs connecting the vertices incident with $\arc{a}$ and $\arc{b}$. 
\item Two Ptolemy arcs of class I (associated to the same pair of strictly crossing arcs) will be called \textit{opposite to each other} if they are not incident with a common vertex.
\item The \textit{distance} between $\arc{a}$ and $\arc{b}$ is defined as
\[
d(\arc{a},\arc{b}) \coloneqq \min\{|t - v|,|u - w|,|t - w|,|u-v|\}.
\]
\item The arcs $\arc{a}$ and $\arc{b}$ are \textit{neighbouring} if they do not cross and $d(\arc{a},\arc{b}) = 1$. 
\item If $\arc{a}$ and $\arc{b}$ are neighbouring arcs and $d(\arc{a},\arc{b})$ is given by the distance between vertices $x$ and $x-1$, then the corresponding \textit{Ptolemy arc of class II} is the arc connecting the vertices incident with $\arc{a}$ and $\arc{b}$ which are not $x$ and $x-1$.   
 \end{compactenum}
Figures \ref{fig:ptolemy-arcs} and \ref{fig:modified-ptolemy} illustrate these concepts. We shall sometimes refer to Ptolemy arcs of both classes simply as Ptolemy arcs.
\end{definitions}

\begin{figure}[!ht]
\begin{center}
\begin{tikzpicture}[thick,scale=0.75, every node/.style={scale=0.75},out=90,in=90]
\draw (0,1) -- (12,1);

\path (1,1) edge (7,1);
\path (5,1) edge (11,1);

\path[out=45,in=135,dashed] (1,1) edge (5,1);
\path[dashed] (1,1) edge (11,1);
\path[out=45,in=135,dashed] (5,1) edge (7,1);
\path[out=45,in=135,dashed] (7,1) edge (11,1);

\end{tikzpicture}
\end{center}
\caption{The Ptolemy arcs of class I.}
\label{fig:ptolemy-arcs}
\end{figure}
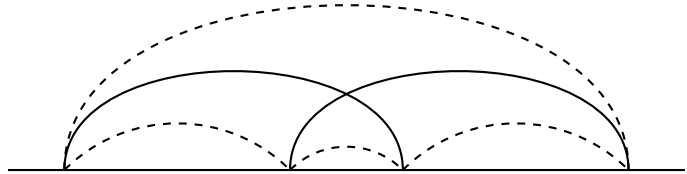

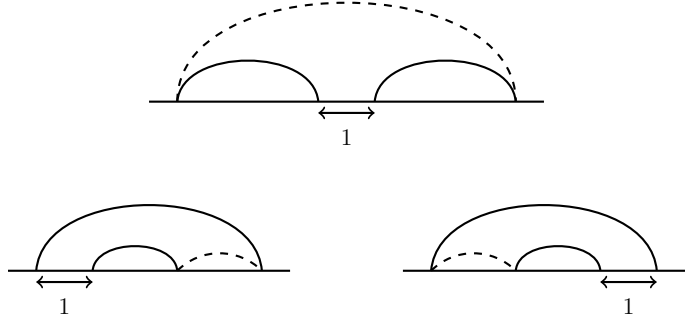
\begin{figure}[!ht]
\begin{center}
\begin{tikzpicture}[thick,scale=0.75, every node/.style={scale=0.75}]

\draw (3.5,4) -- (10.5,4);

\path[out=85,in=95] (4,4) edge (6.5,4);
\path[out=85,in=95] (7.5,4) edge (10,4);

\path[out=90,in=90,dashed] (4,4) edge (10,4);

\draw[<->] (6.5,3.8)-- (7.5,3.8);

\node (arrow) at (7,3.8) {};
\node [below] at (arrow.south) {1};

\draw (1,1) -- (6,1);

\draw (8,1) -- (13,1);

\path[out=85,in=95] (1.5,1) edge (5.5,1);
\path[out=85,in=95] (2.5,1) edge (4,1);
\path[out=45,in=135,dashed] (4,1) edge (5.5,1);

\draw[<->] (1.5,0.8)-- (2.5,0.8);

\node (arrow1) at (2,0.8) {};
\node [below] at (arrow1.south) {1};

\path[out=85,in=95] (8.5,1) edge (12.5,1);
\path[out=85,in=95] (10,1) edge (11.5,1);
\path[out=45,in=135,dashed] (8.5,1) edge (10,1);

\draw[<->] (11.5,0.8)-- (12.5,0.8);
\node (arrow2) at (12,0.8) {};
\node [below] at (arrow2.south) {1};

\end{tikzpicture}
\end{center}
\caption{The Ptolemy arcs of class II.}
\label{fig:modified-ptolemy}
\end{figure} 

Note that Ptolemy arcs of class II are always admissible. However, the same does not hold for Ptolemy arcs of class I. In fact, there is an extension between two strictly crossing arcs if and only if at least one of the Ptolemy arcs is admissible. In this case, we can say more, which is stated in the following lemma, whose proof is obvious. 

\begin{lemma}\label{lem:admissibleclassI}
Given two strictly crossing arcs in $\Tw$ and $\{\arc{x}, \arc{y}\}$ two of the corresponding Ptolemy arcs (of class I) which are opposite to each other, we have $\arc{x}$ admissible if and only if $\arc{y}$ is admissible. 
\end{lemma}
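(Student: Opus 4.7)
Plan of proof for Lemma \ref{lem:admissibleclassI}.

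Since the two arcs strictly cross, their endpoints are four distinct integers, which I label $p_1<p_2<p_3<p_4$. Up to relabelling, the two crossing arcs are then $\arc{a}=(p_3,p_1)$ and $\arc{b}=(p_4,p_2)$, both of which are admissible by hypothesis. The four Ptolemy arcs of class I are the remaining pairings $(p_2,p_1), (p_3,p_2), (p_4,p_3), (p_4,p_1)$, and the two opposite pairs are exactly $\{(p_2,p_1),(p_4,p_3)\}$ and $\{(p_3,p_2),(p_4,p_1)\}$.

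Write $d=w-1$, so $d\leq -2$, and recall that an arc $(t,u)$ is admissible iff $u-t\leq w$ and $u-t\equiv 1\pmod d$. I will treat each opposite pair separately and verify both conditions. The key ingredient is the elementary identity
\[
(p_1-p_2)-(p_3-p_4)=(p_1-p_3)+(p_4-p_2)
\]
together with its analogue $(p_2-p_3)+(p_1-p_4)=(p_1-p_3)+(p_2-p_4)$. Since $\arc{a}$ and $\arc{b}$ are admissible, the right-hand sides of both identities are congruent to $1+1=2\pmod d$, hence the differences
\[
(p_1-p_2)-(p_3-p_4)\quad\text{and}\quad (p_2-p_3)+(p_1-p_4)-2
\]
both vanish modulo $d$. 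This immediately gives: $p_2-p_1\equiv -1\pmod d$ iff $p_4-p_3\equiv -1\pmod d$, and $p_3-p_2\equiv -1\pmod d$ iff $p_4-p_1\equiv -1\pmod d$. So the congruence half of admissibility transfers within each opposite pair.

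It remains to check the length condition. For any of the four Ptolemy arcs $(p_j,p_i)$ with $i<j$, the strict inequality $p_1<p_2<p_3<p_4$ forces $p_j-p_i\geq 1$. If in addition $p_i-p_j\equiv 1\pmod d$, i.e.\ $p_j-p_i\equiv |d|-1\pmod{|d|}$, then $p_j-p_i$ lies in the set $\{|d|-1,2|d|-1,\ldots\}$, so in particular $p_j-p_i\geq |d|-1=|w|$, which is precisely $p_i-p_j\leq w$. Thus the length condition is automatic once the congruence condition holds, and admissibility of one arc in an opposite pair is equivalent to admissibility of the other. The only minor wrinkle is ensuring that the strict ordering $p_1<p_2<p_3<p_4$ really holds; this is guaranteed by the hypothesis that $\arc{a}$ and $\arc{b}$ strictly cross, which is the only place this is used.
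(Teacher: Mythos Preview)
Your approach is the natural elementary one and matches what the paper has in mind; the paper itself simply records the lemma as having an ``obvious'' proof. There is, however, a small sign slip you should fix. For the first identity the right-hand side is $(p_1-p_3)+(p_4-p_2)$, and admissibility of $\arc{b}=(p_4,p_2)$ gives $p_2-p_4\equiv 1\pmod d$, hence $p_4-p_2\equiv -1\pmod d$; so this right-hand side is congruent to $1+(-1)=0$, not $2$. With that correction your displayed claim that $(p_1-p_2)-(p_3-p_4)\equiv 0\pmod d$ is justified and the rest of the argument goes through unchanged. (For the second identity the right-hand side is $(p_1-p_3)+(p_2-p_4)\equiv 1+1=2$, and your deduction there is already consistent.) The observation that the length inequality is automatic once the congruence holds is a nice touch that makes the equivalence genuinely immediate.
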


Now we gather some useful results from \cite{CSP}.

\begin{proposition}[{\cite[Proposition 7.8 and Lemma 7.4]{CSP}}]\label{prop:extensions}
Let $a, b \in \ind{\Tw}$ be such that \linebreak $\Ext^1 (b,a) \neq 0$. Let $e$ be the middle term of the corresponding non-split triangle, and consider the arcs:
\[
\arc{e}_1^+ \coloneqq (s(\arc{a}),t(\arc{b})), \quad \arc{e}_2^+ \coloneqq (s(\arc{b}), t(\arc{a})), \quad
\arc{e}_1^- \coloneqq (s(\arc{b}),s(\arc{a})), \quad \arc{e}_2^- \coloneqq (t(\arc{b}), t(\arc{a})).
\]
Then, besides the case when $b = \Sigma a$ and $e = 0$, we are in one of the following cases:
\begin{compactitem}
\item $\arc{a}, \arc{b}$ strictly cross such that $t(\arc{b}) < t(\arc{a})$, $e = e_1^+ \oplus e_2^+$. 
\item $\arc{a}, \arc{b}$ strictly cross such that $t(\arc{a}) < t(\arc{b})$, $e = e_1^- \oplus e_2^-$.
\item $\arc{a}, \arc{b}$ are neighbouring arcs such that $t(\arc{a}) = s(\arc{b})+1$, $e = e_1^+$.
\item $\arc{a}, \arc{b}$ are neighbouring arcs such that $t(\arc{a}) = t(\arc{b})+1$, $e = e_1^-$.
\item $\arc{a}, \arc{b}$ are neighbouring arcs such that $s(\arc{a}) = s(\arc{b})+1$, $e = e_2^-$. 
\end{compactitem}
\end{proposition}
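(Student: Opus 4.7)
The plan is to reduce the statement to a case analysis based on the hammock description of Hom spaces in $\Tw$. First I would determine exactly when $\Ext^1(b,a) \neq 0$. Since $\Ext^1(b,a) \cong \Hom(b, \Sigma a)$, Proposition \ref{prop:hom-hammocks} tells us this is non-zero (and then one-dimensional) precisely when $\Sigma a \in \homfrom{b} \cup \homto{\SSS b}$. Translating the arc operations $\Sigma(t,u) = (t-1,u-1)$ and $\SSS(t,u) = (t-w,u-w)$ into the combinatorial description of the two hammocks of $\arc{b}$, a direct inspection shows that there are exactly six possibilities: the trivial case $b = \Sigma a$, and the five non-trivial cases listed in the statement (two strict crossings, distinguished by whether $t(\arc{b}) < t(\arc{a})$ or $t(\arc{a}) < t(\arc{b})$, and three configurations of neighbouring arcs, distinguished by which pair of endpoints are adjacent).

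Next, for each non-trivial case I would identify the middle term of the non-split triangle $a \to e \to b \to \Sigma a$, which is unique up to isomorphism because $\dim_\kk \Ext^1(b,a) = 1$. In the neighbouring cases, the chain of irreducible morphisms connecting $\arc{a}$ to $\arc{b}$ in the AR-quiver has a prescribed length dictated by the common adjacency; peeling off AR-triangles one at a time and invoking the octahedral axiom identifies the middle term as the unique admissible Ptolemy arc of class II ($\arc{e}_1^+$, $\arc{e}_1^-$, or $\arc{e}_2^-$, respectively). In the strictly crossing cases, the candidate middle term is a direct sum of two Ptolemy arcs of class I that are opposite to each other in the sense preceding Lemma \ref{lem:admissibleclassI}; that lemma guarantees both are admissible whenever one is, so both indeed lie in $\Tw$. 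I would then exhibit the triangle either by factoring the connecting morphism through the two Ptolemy arcs and applying the octahedral axiom, or by induction on the distance $d(\arc{a},\arc{b})$ via AR-triangles, reducing to a shortest crossing configuration.

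The main obstacle is selecting the correct pair of opposite Ptolemy arcs (the $+$ pair versus the $-$ pair) in the strictly crossing configurations, since the four candidates $\arc{e}_1^\pm, \arc{e}_2^\pm$ split into two opposite pairs and only one of them realises the middle term of the given extension. This is a matter of tracking the direction of the non-zero morphism $b \to \Sigma a$: the chain of irreducible morphisms realising this composition passes through exactly one of the two opposite pairs, while the other pair realises the reverse extension $b' \to \Sigma a'$ for the swapped roles. The two are distinguished by comparing dimensions of composed $\Hom$-spaces along the chain using Proposition \ref{prop:hom-hammocks}, which pins down the pair according to the sign of $t(\arc{a}) - t(\arc{b})$. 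Once this is done, the decomposition of $e$ is forced by uniqueness of the non-split extension up to scalar.
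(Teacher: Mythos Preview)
This proposition is not proved in the present paper; it is quoted verbatim from \cite[Proposition 7.8 and Lemma 7.4]{CSP}, so there is no in-paper argument to compare your proposal against.

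That said, your outline is the natural strategy and is in the spirit of how such results are established for $\bZ A_\infty$-components: use the hammock description of Hom-spaces (Proposition~\ref{prop:hom-hammocks}) to pin down the finitely many relative positions of $\arc{a}$ and $\arc{b}$ for which $\Ext^1(b,a)\neq 0$, and then compute the cone in each case by an induction along AR-triangles combined with the octahedral axiom. One point to tighten: your discussion of the ``main obstacle'' suggests that in a strictly crossing configuration both opposite pairs $\{\arc{e}_1^+,\arc{e}_2^+\}$ and $\{\arc{e}_1^-,\arc{e}_2^-\}$ are admissible and one must choose between them. In fact, for a given strict crossing exactly one of the two opposite pairs consists of admissible arcs (this is what Lemma~\ref{lem:admissibleclassI} is really saying: admissibility is a property of an opposite pair, and the two opposite pairs have lengths of different residues modulo $|d|$), so the choice of $+$ versus $-$ is forced combinatorially by which of $t(\arc{a})<t(\arc{b})$ or $t(\arc{b})<t(\arc{a})$ holds, not by a separate Hom-dimension computation. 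With that correction, your sketch is sound.
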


A priori the arcs $\arc{e}_1^+, \arc{e}_1^-, \arc{e}_2^+$ and $\arc{e}_2^-$ may not be admissible. But, in each case, the ones mentioned are admissible.

\begin{theorem}[{\cite[Theorem 7.2]{CSP}}]\label{prop:computationofextensions}
Let $\sX$ be a subcategory of $\sT_w$ and $\tX$ be the corresponding collection of admissible arcs. Then the objects of $\ind{\extn{\sX}}$ correspond to the arcs of the closure of $\tX$ under admissible Ptolemy arcs of class $I$ and $II$.
\end{theorem}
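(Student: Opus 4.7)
The plan is to let $\overline{\tX}$ denote the closure of $\tX$ under admissible Ptolemy operations of classes I and II, and $\sY$ the corresponding full additive subcategory of $\Tw$ whose indecomposable objects correspond to the arcs in $\overline{\tX}$. Since Lemma~\ref{lem:closedsummands}(1) gives $\extn{\sX}^\oplus = \bigcup_{n \geq 1}(\sX)_n$, it suffices to establish $\ind{\sY} = \ind{\extn{\sX}^\oplus}$, from which the arc-theoretic statement follows.

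For the inclusion $\ind{\sY} \subseteq \ind{\extn{\sX}^\oplus}$, I would induct on the minimum number of Ptolemy operations needed to produce a given arc, with trivial base case $\tX \subseteq \ind{\extn{\sX}^\oplus}$. At an inductive step, a new admissible Ptolemy arc $\arc{c}$ is produced from arcs $\arc{a}, \arc{b} \in \overline{\tX}$ whose corresponding objects $a, b$ lie in $\extn{\sX}^\oplus$ by induction. Proposition~\ref{prop:extensions} identifies $c$ as an indecomposable summand of the middle term of a non-split triangle built from $a$ and $b$, so $c \in \extn{\sX}^\oplus$ by closure under extensions and summands.

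The reverse inclusion $\ind{\extn{\sX}^\oplus} \subseteq \ind{\sY}$ requires induction on the smallest $n$ such that $x \in (\sX)_n$ for an indecomposable $x$. For the inductive step, $x$ appears as an indecomposable summand of some $y$ fitting in a triangle $y' \to y \to y'' \to \Sigma y'$ with $y' \in (\sX)_{n-1}$ and $y'' \in \sX$. Decomposing $y'$ and $y''$ into their indecomposable summands (whose arcs lie in $\overline{\tX}$ by the inductive hypothesis) and decomposing the extension class in $\bigoplus_{i,j}\Ext^1(y''_j, y'_i)$, one may iteratively apply the mapping cone construction to reduce to a sequence of extensions between indecomposables. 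At each stage, Proposition~\ref{prop:hom-hammocks} constrains the relevant pair of arcs to strictly cross or to be neighbouring (the only configurations yielding a nonzero $\Ext^1$), and Proposition~\ref{prop:extensions} then identifies the summands of the intermediate middle term as admissible Ptolemy arcs of class I or II of the two arcs being extended, hence as elements of $\overline{\tX}$.

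The main obstacle is the bookkeeping for this iterative mapping cone process: one must verify that each successive intermediate middle term in the sequence has \emph{all} its indecomposable summands in $\overline{\tX}$, so that the induction may continue past the first Ptolemy operation. Lemma~\ref{lem:admissibleclassI}, which guarantees that opposite class I Ptolemy arcs are simultaneously admissible, streamlines the case analysis, but tracking the combinatorics of which Ptolemy arcs arise at each step — in particular when the same indecomposable $y''_j$ is extended by several summands of $y'$ — remains the technical heart of the argument.
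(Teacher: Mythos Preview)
This theorem is not proved in the present paper: it is quoted verbatim from \cite[Theorem~7.2]{CSP} and used as a black box, so there is no proof here against which to compare your proposal.

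As to the proposal itself, the overall two-inclusion strategy is the natural one, and the forward inclusion $\ind{\sY} \subseteq \ind{\extn{\sX}^\oplus}$ goes through as you describe. For the reverse inclusion, however, you have correctly located but not resolved the actual difficulty. The step ``decomposing the extension class in $\bigoplus_{i,j}\Ext^1(y''_j, y'_i)$, one may iteratively apply the mapping cone construction to reduce to a sequence of extensions between indecomposables'' is not automatic: after the first cone, the intermediate object need not be indecomposable, and the next extension is then by a \emph{decomposable} object against a decomposable object, so Proposition~\ref{prop:extensions} no longer applies directly. One needs either a careful analysis of how several summands of $y'$ can extend the same $y''_j$ (this is precisely the content of results such as \cite[Proposition~4.12]{CSP}, which the paper invokes elsewhere), or an argument that the iteration can be arranged so that at each stage only indecomposables are involved. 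Your final paragraph acknowledges this gap; closing it is the substance of the proof in \cite{CSP}.
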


\section{Classification of $w$-simple-minded systems in $\Tw$}

In this section, we shall give a classification of $w$-simple-minded systems in $\Tw$ in terms of admissible arcs in the $\infty$-gon. Firstly, we have the following generalisation of the fact that every simple-minded system is a Riedtmann configuration. Recall that in a distinguished triangle $\trilabels{x}{y}{z}{}{g}{}$, $x$ is called the {\it cocone of $g$} and it is denoted by $\cocone{g}$. 

\begin{lemma}\label{lem:smsimpliesRiedtmann}
Given a triangulated category $\sT$ with a Serre functor and $w \leq -1$, any $w$-simple-minded system in $\sT$ is a $w$-Riedtmann configuration. 
\end{lemma}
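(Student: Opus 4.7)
My plan is to verify both the left and right versions of condition (c) in the definition of a $w$-Riedtmann configuration, since $w$-orthogonality is already part of the hypothesis of a $w$-simple-minded system. The main input is condition (d), which gives $\sT = \bigcup_{n \geq 1} (\sX)_n^{\oplus}$ with $\sX = \bigcup_{i=w+1}^{0} \Sigma^i \sS$; everything will flow from this by induction along distinguished triangles. Notably, the Serre functor hypothesis is not used in the argument.

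First I would establish the following filtration lemma: for every nonzero $z \in \sT$, there exist $y_1, y_2 \in \sX$ with $\Hom(z, y_1) \neq 0$ and $\Hom(y_2, z) \neq 0$. Arguing by contradiction, suppose $\Hom(z, y) = 0$ for all $y \in \sX$. I would then show by induction on $n$ that $\Hom(z, w) = 0$ for every $w \in (\sX)_n$: any such $w$ is a summand of some $w'$ fitting in a triangle $a \to w' \to b \to \Sigma a$ with $a \in \sX$ and $b \in (\sX)_{n-1}$, so applying $\Hom(z, -)$ and using $\Hom(z, a) = 0$ (hypothesis) together with $\Hom(z, b) = 0$ (inductive hypothesis), the resulting long exact sequence forces $\Hom(z, w') = 0$, hence $\Hom(z, w) = 0$. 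Condition (d) then forces $\Hom(z, z) = 0$, contradicting $z \neq 0$. The dual statement is obtained by applying the contravariant functor $\Hom(-, z)$ to the same triangle and reading off the analogous three-term exact sequence.

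Applying this lemma to $z = x$ (for nonzero $x$) immediately yields the right Riedtmann condition: we obtain $y_1 = \Sigma^i s$ with $i \in \{w+1, \ldots, 0\}$ and $\Hom(x, \Sigma^i s) = \Ext^i(x, s) \neq 0$. The left Riedtmann condition is where the main subtlety arises: applying the lemma directly to $x$ produces $\Hom(\Sigma^i s, x) = \Ext^{-i}(s, x) \neq 0$ with $-i \in \{0, 1, \ldots, |w|-1\}$, which is outside the required range $\{w+1, \ldots, 0\}$. The remedy is to apply the filtration lemma not to $x$ itself but to the nonzero object $\Sigma^{w+1} x$ (nonzero since $\Sigma$ is an autoequivalence): this delivers $y_2 = \Sigma^i s \in \sX$ with $\Hom(\Sigma^i s, \Sigma^{w+1} x) \neq 0$, which rewrites as $\Ext^{w+1-i}(s, x) \neq 0$, and now for $i \in \{w+1, \ldots, 0\}$ the new exponent $w+1-i$ lies in $\{w+1, \ldots, 0\}$ as required. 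Recognising this asymmetry between the left and right formulations, and compensating with the pre-shift by $\Sigma^{w+1}$, is the step I expect to be the conceptual hurdle; the filtration induction itself is straightforward.
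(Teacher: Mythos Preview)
Your proof is correct and follows essentially the same route as the paper: both arguments run an induction along the filtration $(\sX)_n$ to produce a nonzero morphism between an arbitrary object and some element of $\sX$, and both handle the left condition via a shift by $|w|+1$ (you shift the test object to $\Sigma^{w+1}x$, the paper instead observes that $\Sigma^{-w-1}\sS$ is again a $w$-simple-minded system and reruns the argument---these are equivalent manoeuvres). The only cosmetic difference is that the paper argues directly (given $t\in(\sX)_n$, either the map $t\to x$ in the defining triangle is nonzero, or $t$ drops to $(\sX)_{n-1}$), whereas you phrase it as a vanishing-by-contradiction; and your observation that the Serre functor is never actually invoked is accurate.
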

\begin{proof}
Suppose $\sS$ is a $w$-simple-minded system, and let $t \in \sT$. Then there is $n \geq 1$ such that $ t \in (\sX)^{\oplus}_n$, with $\sX = \bigcup\limits_{i=w+1}^0 \Sigma^i \sS$. We shall prove that there is $x \in \sX$ such that $\Hom (t,x) \neq 0$ by induction on $n$. If $n = 1$, then $t \in \sX$, and the result is clear. If $n > 1$, by Lemma \ref{lem:closedsummands} (1), we have a triangle $\trilabels{y}{t \oplus t^\prime}{x}{}{\rowmat{f}{g}}{}$, where $y \in (\sX)_{n-1}$ and $x \in \sX$. If $f = 0$, then $y = \cocone{g} \oplus t$, and so $t$ is a summand of $y$. Since, by definition, $(\sX)_{n-1}$ is closed under direct summands, it follows that $t \in (\sX)_{n-1}$, and the result holds by induction hypothesis. If $f \neq 0$, then $\Hom (t, x) \neq 0$, as required. Therefore, $\sS$ is a right $w$-Riedtmann configuration. 

Note that if $\sS$ is a $w$-simple-minded system, then so is $\Sigma^{-w-1} \sS$. We can use this fact to prove, in a similar way, that $\sS$ is also a left $w$-Riedtmann configuration in $\sT$.
\end{proof}

The main result of this section is as follows, whose proof is split up into three lemmas.

\begin{theorem}\label{thm:classificationsms}
A collection of indecomposable objects $\sS$ in $\Tw$ is a $w$-simple-minded system if and only if the collection $\tS$ of the corresponding admissible arcs satisfies the following conditions:
\begin{compactenum}
\item There are no crossings.
\item Every arc in $\tS$ has precisely $|w|-1$ inner-isolated vertices. 
\item There are no overarcs, i.e. every arc in $\tS$ has an overarc. 
\end{compactenum}
\end{theorem}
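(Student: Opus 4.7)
The plan is to translate, via Theorem~\ref{prop:computationofextensions}, the defining sms condition $\bigcup_n (\sX)_n^\oplus = \Tw$ (with $\sX = \bigcup_{i=w+1}^0 \Sigma^i \sS$) into the combinatorial assertion that the admissible Ptolemy closure (under classes I and II) of $\tX \coloneqq \bigcup_{j=0}^{|w|-1}(\tS + j)$ equals the set of all admissible arcs. Both directions are then handled geometrically.

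For the forward direction, Lemma~\ref{lem:smsimpliesRiedtmann} combined with Theorems~\ref{thm:Hom-config} and~\ref{thm:Riedtmann} immediately yields conditions (1) and (2) (and at most $|w|-1$ outer-isolated vertices). To establish (3), I would argue by contradiction: supposing $\arc{a} = (t_0, u_0) \in \tS$ is an outer arc, I aim to exhibit an admissible arc missing from the Ptolemy closure of $\tX$. A natural candidate is one strictly overarcing $\arc{a}$ with large margin, e.g.\ $\arc{c} = (t_0 + M, u_0 - M)$ for $M$ chosen so that $\arc{c}$ is admissible. The obstruction is that, since $\arc{a}$ has no overarc in $\tS$, shifts $\arc{b}+j \in \tX$ can only strictly overarc $\arc{a}$ by means of arcs $\arc{b} \in \tS$ which overarc shifted copies of $\arc{a}$, and Proposition~\ref{prop:extensions} shows that admissible middle terms of extensions between such arcs respect strong positional constraints. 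An induction on the depth of Ptolemy iterations, applied to these constraints, shows that no arc in the closure strictly overarcs $\arc{a}$ by a margin as large as that of $\arc{c}$.

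For the backward direction, conditions (1), (2), (3), together with the Hom-configuration bound of at most $|w|-1$ outer-isolated vertices (which must be included, as otherwise arcs incident to vertices deep inside a large outer-isolated block cannot be reached by endpoints in $\tX$), give that $\sS$ is a $w$-Riedtmann configuration by Theorems~\ref{thm:Hom-config} and~\ref{thm:Riedtmann}, hence $w$-orthogonal. The central task is then to show that every admissible arc $\arc{c}$ lies in the admissible Ptolemy closure of $\tX$. Using (3) iteratively, I would find an arc $\arc{b} \in \tS$ overarcing $\arc{c}$; the shifts $\arc{b}, \arc{b}+1, \ldots, \arc{b}+|w|-1 \in \tX$ pairwise strictly cross, and in combination with the arcs of $\tS$ nested inside $\arc{b}$ (whose structure is tightly governed by the inner-isolated count from (2), giving $l_{\arc{b}} = |w| + N + \sum_c l_c$ for the children of $\arc{b}$), Proposition~\ref{prop:extensions} applied iteratively yields a descending chain of admissible Ptolemy arcs in the closure, terminating at $\arc{c}$.

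The hardest step will be the backward direction: the explicit realization of an arbitrary admissible arc $\arc{c}$ as an iterated admissible Ptolemy arc from $\tX$, which requires careful verification that the admissibility constraints $u - t \leq w$ and $u - t \equiv 1 \pmod{d}$ are preserved on every middle term produced at each stage, and judicious use of both the nested tower from (3) and the fine inner structure dictated by (2).
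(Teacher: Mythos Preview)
Your overall strategy matches the paper's --- translate the sms condition to the Ptolemy closure via Theorem~\ref{prop:computationofextensions} and argue geometrically in both directions --- but both directions contain genuine gaps in the execution.

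\textbf{Forward direction.} Your proposed obstruction and its justification are incorrect. You claim that if $\arc{a}=(t_0,u_0)$ is an outer arc then no arc in the Ptolemy closure of $\tX$ overarcs $\arc{a}$ by a margin as large as a chosen $M$. This is false already for $w=-1$: take $\tS = \{(2k+1,2k) : k \in \bZ\}$, a $(-1)$-Riedtmann configuration in which every arc is an outer arc. Here $\tX=\tS$, and iterated Ptolemy moves (class~II to build $(2m+1,0)$ and $(1,-2m')$, then class~I on these crossing arcs) produce $(2m+1,-2m')$ for all $m,m'\geq 0$, so the closure contains arcs overarcing $(1,0)$ with arbitrarily large margin on both sides. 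The paper's Lemma~\ref{lem:arcssms} instead tracks a sharper pair of invariants by induction on $n$: $(i)_n$ there is no overarc $\sigma$ of $s(\arc{a})$ in $(\tX)_n$ with $(s(\sigma),s(\arc{a}))$ \emph{admissible}, and $(ii)_n$ there is no arc in $(\tX)_n$ with target $s(\arc{a})$. The admissibility clause in $(i)_n$ is precisely what a pure margin argument misses; the correct obstruction is an arc with target $s(\arc{a})$, not a large overarc of $\arc{a}$.

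\textbf{Backward direction.} Your emphasis on the shifts $\arc{b},\arc{b}+1,\dots,\arc{b}+|w|-1$ pairwise strictly crossing is misplaced: for $w\leq -2$ a direct congruence check shows that none of the four Ptolemy arcs of class~I associated with a crossing pair $\arc{b}+i$, $\arc{b}+j$ ($0\leq i<j\leq |w|-1$) is admissible, so these crossings contribute nothing to the closure. The paper's Lemma~\ref{lemma:overarcimpliessimpleminded} proceeds differently: it first reduces to showing that every arc of \emph{minimum length} $(x,x+w)$ lies in $\bigcup_n(\tX)_n^\oplus$ (which already generates all of $\Tw$), and then realises each $(x,x+w)$ by concatenating suitably shifted inner-arcs of the smallest overarc of $x+w$ via Ptolemy arcs of class~II (neighbouring arcs, not crossings). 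Condition~(2) is exactly what guarantees the required shifts stay in the range $\{w+1,\dots,0\}$, so that the shifted arcs lie in $\tX$. Your parenthetical about needing to separately impose the bound on outer-isolated vertices is also off: condition~(3) alone forces an infinite strictly nested tower of overarcs, hence zero outer-isolated vertices.
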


\begin{lemma}
Let $\sS$ be a $w$-simple-minded system in $\Tw$. Then, there are no strict-crossings and every arc in $\tS$ has precisely $|w|-1$ inner-isolated vertices. 
\end{lemma}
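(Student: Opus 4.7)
The plan is to observe that this lemma is essentially a direct corollary of results already assembled in the paper, rather than needing a separate combinatorial argument. The reasoning proceeds in three short steps.

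First, I invoke the preceding Lemma \ref{lem:smsimpliesRiedtmann}, which tells us that any $w$-simple-minded system in a triangulated category with Serre functor (in particular, in $\Tw$) is a $w$-Riedtmann configuration. This immediately promotes $\sS$ from a $w$-orthogonal set satisfying the generation condition (d) to a set satisfying the left and right Riedtmann conditions (c).

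Second, by the remark inserted in the text immediately after Theorem \ref{thm:Riedtmann} (``any (left/right) $w$-Riedtmann configuration is a $w$-Hom configuration''), I conclude that $\sS$ is in particular a $w$-Hom configuration in $\Tw$.

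Third, I apply Theorem \ref{Homconfigclassification} to $\sS$. Part (1) of that theorem says $\tS$ has no crossings in the sense of the definition given earlier; since a crossing means either being incident to a common vertex or strictly crossing, this in particular excludes strict crossings. Part (2) of that theorem says exactly that each arc in $\tS$ has precisely $|w|-1$ inner-isolated vertices. Together these give both conclusions of the lemma.

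There is no genuine obstacle here: the substance of the lemma lies in the classification Theorem \ref{Homconfigclassification} of $w$-Hom configurations, which is already cited from~\cite{CS3}, and in Lemma \ref{lem:smsimpliesRiedtmann}, which was just proved. The only thing worth flagging is that the conclusion ``no strict-crossings'' is slightly weaker than what one actually gets from Theorem \ref{Homconfigclassification}(1); one simply reads off the needed weaker statement.
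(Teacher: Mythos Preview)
Your proposal is correct and follows essentially the same route as the paper's own one-line proof, which simply cites Lemma~\ref{lem:smsimpliesRiedtmann} and Theorem~\ref{thm:Hom-config}; you have merely made explicit the intermediate observation that a $w$-Riedtmann configuration is a $w$-Hom configuration. One small slip: that remark appears immediately \emph{before} Theorem~\ref{thm:Riedtmann} (i.e.\ just after Theorem~\ref{Homconfigclassification}), not after it.
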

\begin{proof}
It follows immediately from Lemma \ref{lem:smsimpliesRiedtmann} and Theorem \ref{thm:Hom-config}.
\end{proof}

\begin{lemma}\label{lem:arcssms}
In a $w$-simple-minded system in $\Tw$, every arc has an overarc. 
\end{lemma}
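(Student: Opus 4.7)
The plan is to prove the contrapositive. Suppose $\arc{a}=(t_0,u_0) \in \tS$ has no overarc in $\tS$; I will exhibit an indecomposable object of $\Tw$ that is not in $\extn{\sX}^\oplus$, where $\sX = \bigcup_{i=w+1}^{0} \Sigma^i \sS$, contradicting condition (d) of a $w$-simple-minded system. The natural candidate is the Serre translate $\SSS \arc{a} = \Sigma^w \arc{a}$, corresponding to the admissible arc $(t_0+|w|, u_0+|w|)$. Writing $\tX = \bigcup_{i=w+1}^{0}\Sigma^i \tS$ as $\tS$ together with its rightward vertex-shifts by $1, \ldots, |w|-1$, the shifts of $\arc{a}$ appearing in $\tX$ are $(t_0+j, u_0+j)$ for $j \in \{0, 1, \ldots, |w|-1\}$, while $\SSS\arc{a}$ is the ``next'' shift $j=|w|$ lying just outside $\tX$. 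A direct orthogonality argument combined with the no-crossing property of $\tS$ (using $l(\arc{a}) \geq |w|$, so any nontrivial rightward shift of $\arc{a}$ by $j\in[1,|w|]$ either strictly crosses $\arc{a}$ or shares a vertex with it) first rules out $\SSS\arc{a}$ being in $\sX$ itself.

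By Theorem \ref{prop:computationofextensions}, it then suffices to show that $\SSS\arc{a}$ is not in the closure of $\tX$ under admissible Ptolemy arcs of classes I and II. The key geometric input is a three-region decomposition: combining the no-overarc hypothesis with the no-crossing property of $\tS$ (inherited from Lemma \ref{lem:smsimpliesRiedtmann} together with Theorem \ref{thm:Hom-config}), every arc of $\tS \setminus \{\arc{a}\}$ lies entirely strictly to the left of $u_0$, strictly to the right of $t_0$, or strictly inside $(u_0, t_0)$. This geometry controls which vertices near the endpoints $t_0+|w|$ and $u_0+|w|$ of $\SSS\arc{a}$ can actually appear as endpoints of arcs in $\tX$, and hence in its Ptolemy closure.

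The principal obstacle will be to identify a Ptolemy-stable invariant violated by $\SSS\arc{a}$ and carry out the inductive verification. A naive source-residue invariant modulo $|w-1|$ succeeds when $\tS$ has only a single residue class but fails in mixed-residue situations (which can occur in the presence of several outer-arcs); the invariant must therefore encode both residue information and a positional refinement tracking the ``gap'' at $\arc{a}$ within each shifted copy $\Sigma^i \tS$. With such an invariant in hand, an inductive case analysis on the Ptolemy operations described in Proposition \ref{prop:extensions} --- organised by which of the three regions, and which shift $\Sigma^i \tS$, each of the two operand arcs comes from --- should show that producing an admissible arc with source $t_0+|w|$ and target $u_0+|w|$ would force an overarc of $\arc{a}$ in $\tS$ to appear at some stage of the Ptolemy construction, contradicting the hypothesis and completing the proof.
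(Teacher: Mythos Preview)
Your proposal has a genuine gap at precisely the point you identify as ``the principal obstacle'': you never actually produce the Ptolemy-stable invariant. You note that the naive residue invariant fails in mixed-residue configurations, correctly diagnose that a refinement is needed, describe in broad terms what such a refinement ``must'' encode, and then proceed conditionally (``with such an invariant in hand\ldots should show\ldots''). That is a plan, not a proof. Without an explicit invariant and a verified case analysis, there is no argument here --- and the difficulty you flag (several outer-arcs in different residue classes interacting under Ptolemy moves) is real, so it is not obvious that your sketch can be completed along the lines you suggest.

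The paper sidesteps this by abandoning the attempt to exclude the single arc $\SSS\arc{a}$ and instead tracking a \emph{vertex}. Writing $\alpha$ for the outer-arc, it proves by induction on $n$ the pair of statements: $(i)_n$ no arc of $(\tX)_n$ is an overarc of $s(\alpha)$ whose source makes $(s(\sigma),s(\alpha))$ admissible, and $(ii)_n$ no arc of $(\tX)_n$ has target $s(\alpha)$. These two conditions support each other under every Ptolemy operation (the case analysis is entirely mechanical, seven cases for class~I and~II arcs), and $(ii)_n$ for all $n$ already excludes \emph{every} admissible arc ending at $s(\alpha)$ from $\extn{\sX}$, giving the contradiction. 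The point is that the paper's invariant is local to a single vertex and its immediate overarcs, so no residue bookkeeping across regions is required; this is what makes the induction tractable. If you want to repair your approach, the cleanest fix is simply to switch target: rather than $\SSS\arc{a}$, aim to exclude all arcs with target $s(\arc{a})$, and you will be led to exactly the paper's pair $(i)_n,(ii)_n$.
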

\begin{proof}
Let $\sS$ be a $w$-simple-minded system in $\Tw$ and $\sX = \bigcup\limits_{i=w+1}^0 \Sigma^i \sS$. Suppose there is an outer-arc $\alpha$ in $\tS$. We claim that the following statements hold for every $n \geq 1$:

$(i)_n$: There is no overarc $\sigma$ of $s(\alpha)$ in $(\sX)_n$ such that $(s(\sigma), s(\alpha))$ is admissible, and

$(ii)_n$: There is no arc $\sigma$ in $(\sX)_n$ such that $t(\sigma) = s(\alpha)$. 

The proof is done by induction on $n$. 

$(i)_1$:  Since $s(\alpha)$ has no overarcs in $\tS$, if $\sigma \in \tX$ is such that $\sigma$ is an overarc of $s(\alpha)$, then $s(\sigma)-s(\alpha) \leq |w|-1$, and so $(s(\sigma), s(\alpha))$ is not admissible. 

$(ii)_1$:  It is clear that, given an arc $\sigma \in \sS$ to the right of $s(\alpha)$, there is no $i \in \{w+1, \ldots, -1\}$ such that $t(\Sigma^i \sigma) = s(\alpha)$. If $\sigma$ is an arc in $\sS$ such that $s(\sigma) \leq s(\alpha)$, then $s(\alpha) - t(\Sigma^i \sigma) \geq 1$, for every $1+w \leq i \leq -1$, and so there is no arc in $\tX$ whose target is $s(\alpha)$. 

Suppose $(i)_n$ and $(ii)_n$ hold. Firstly, let us prove $(i)_{n+1}$. Suppose, for a contradiction, that there is an overarc $\sigma$ of $s(\alpha)$ in $(\tX)_{n+1}$ such that $(s(\sigma), s(\alpha))$ is admissible. Then, by Theorem \ref{prop:computationofextensions}, $\sigma$ must be obtained from arcs $\arc{a}$ and $\arc{b}$ in $(\tX)_n$ in one of the ways shown in Figure \ref{fig:ptolemysigma}.

\begin{figure}[!ht]
\begin{center}
\begin{tikzpicture}[thick,scale=0.75, every node/.style={scale=0.75}]


\draw (4.5,4) -- (11.5,4);

\path[out=85,in=95] (5,4) edge (7.5,4);
\path[out=85,in=95] (8.5,4) edge (11,4);

\path[out=90,in=90,dashed] (5,4) edge (11,4);

\draw[<->] (7.5,3.8)-- (8.5,3.8);

\node (arrow) at (8,3.8) {};
\node [below] at (arrow.south) {1};

\node (a) at (6.2,5) {};
\node at (a) {$\mathtt{a}$};

\node (b) at (9.7,5) {};
\node at (b) {$\mathtt{b}$};

\node (1) at (8,6) {};
\node at (1) {$\sigma_1$};

\draw (2,1) -- (7,1);

\draw (9,1) -- (14,1);

\path[out=85,in=95] (2.5,1) edge (6.5,1);
\path[out=85,in=95] (3.5,1) edge (5,1);
\path[out=45,in=135,dashed] (5,1) edge (6.5,1);

\draw[<->] (2.5,0.8)-- (3.5,0.8);

\node (arrow1) at (3,0.8) {};
\node [below] at (arrow1.south) {1};

\node (b) at (4.2,1.5) {};
\node [above] at (b) {$\mathtt{b}$};

\node (a) at (4.5,2.3) {};
\node [above] at (a) {$\mathtt{a}$};

\node (2) at (5.8,1.3) {};
\node [above] at (2) {$\sigma_2$};

\path[out=85,in=95] (9.5,1) edge (13.5,1);
\path[out=85,in=95] (11,1) edge (12.5,1);
\path[out=45,in=135,dashed] (9.5,1) edge (11,1);

\draw[<->] (12.5,0.8)-- (13.5,0.8);
\node (arrow2) at (13,0.8) {};
\node [below] at (arrow2.south) {1};

\node (b) at (11.7,1.5) {};
\node [above] at (b) {$\mathtt{b}$};

\node (a) at (11.5,2.3) {};
\node [above] at (a) {$\mathtt{a}$};

\node (3) at (10.2,1.3) {};
\node [above] at (3) {$\sigma_3$};

\end{tikzpicture}
\end{center}
\begin{center}
\begin{tikzpicture}[thick,scale=0.75, every node/.style={scale=0.75},out=90,in=90]

\draw (0,1) -- (12,1);

\path (1,1) edge (7,1);
\path (5,1) edge (11,1);

\node (b) at (4,3) {};
\node at (b) {$\mathtt{b}$};

\node (a) at (8,3) {};
\node at (a) {$\mathtt{a}$};

\path[out=45,in=135,dashed] (1,1) edge (5,1);
\path[dashed] (1,1) edge (11,1);
\path[out=45,in=135,dashed] (5,1) edge (7,1);
\path[out=45,in=135,dashed] (7,1) edge (11,1);

\node (4) at (6,1.6) {};
\node at (4) {$\sigma_4$};

\node (5) at (6,4) {};
\node [above] at (5) {$\sigma_5$};

\node (6) at (3,2) {};
\node at (6) {$\sigma_6$};

\node (7) at (9,2) {};
\node at (7) {$\sigma_7$};

\end{tikzpicture}
\end{center}
\caption{$\arc{a}, \arc{b} \in (\tX)_n$ and $\sigma_i \in (\tX)_{n+1}$.}
\label{fig:ptolemysigma}
\end{figure}

\Case{$\sigma = \sigma_1$} If $s(\alpha) = t(\arc{b})$, then $\arc{b}$ contradicts $(ii)_n$. If $s(\alpha) = s(\arc{a})$, then $(s(\sigma), s(\alpha))$ cannot be admissible, since $\arc{b}$ is admissible. If $\arc{b}$ is an overarc of $s(\alpha)$, then $\arc{b}$ contradicts $(i)_n$. Finally, if $\arc{a}$ is an overarc of $s(\alpha)$, then $\arc{a}$ contradicts $(i)_n$, since we would have an admissible arc $(s(\arc{a}),s(\alpha))$. 

\Case{$\sigma = \sigma_2$ or $\sigma = \sigma_7$} The arc $\arc{a}$ clearly contradicts $(i)_n$. 

\Case{$\sigma = \sigma_3$} In this case, the arc $\arc{a}$ contradicts $(i)_n$ since, if $(s(\sigma), s(\alpha))$ is admissible then so is $(s(\arc{a}), s(\alpha))$.

\Case{$\sigma = \sigma_4$} The arc $\arc{b}$ clearly contradicts $(i)_n$. 

\Case{$\sigma = \sigma_5$} If $\arc{a}$ is an overarc of $s(\alpha)$, then it contradicts $(i)_n$. If $s(\alpha) = t(\arc{a})$, then $\arc{a}$ contradicts $(ii)_n$. Hence, we have $t(\arc{b}) < s(\alpha) < t(\arc{a})$. But, since $(s(\arc{a}), s(\alpha))$ is admissible, so is $(s(\arc{b}), s(\alpha))$, and so $\arc{b}$ contradicts $(i)_n$.

\Case{$\sigma = \sigma_6$} If $(s(\sigma),s(\alpha))$ is admissible then so is $(s(\arc{b}),s(\alpha))$, contradicting $(i)_n$. Therefore, $(i)_{n+1}$ holds. 

Lastly, we prove $(ii)_{n+1}$. An arc $\sigma$ in $(\tX)_{n+1}$ whose target is $s(\alpha)$ can be obtained as in Figure \ref{fig:ptolemysigma}. However, each case except $\sigma_2$ and $\sigma_7$ contradict $(ii)_n$, and the cases $\sigma_2$ and $\sigma_7$ contradict $(i)_n$.

Hence $\bigcup\limits_{n \geq 1} (\tX)_n$ does not contain, for instance, admissible arcs whose target is $s(\alpha)$, contradicting the fact that $\sS$ is a $w$-simple-minded system.
\end{proof}

\begin{lemma}\label{lemma:overarcimpliessimpleminded}
Any $w$-Riedtmann configuration in $\Tw$ with no outer-arcs is a $w$-simple-minded system in $\Tw$. 
\end{lemma}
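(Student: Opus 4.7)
The plan is to verify condition (d) of the definition of a $w$-simple-minded system, i.e. to show that every indecomposable $t \in \Tw$ lies in $(\sX)_n^\oplus$ for some $n$, where $\sX = \bigcup_{i=w+1}^0 \Sigma^i \sS$. By Theorem \ref{prop:computationofextensions}, it is enough to show that every admissible arc $\tau$ of $\Tw$ lies in the closure $\overline{\tX}$ of $\tX = \bigcup_{i=w+1}^0 \Sigma^i \tS$ under admissible Ptolemy arcs of class I and II.

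The first step is to extract the key geometric consequence of the hypotheses. Since $\sS$ is a $w$-Riedtmann configuration, Theorem \ref{thm:Riedtmann} provides at most $|w|-1$ outer-isolated vertices in $\tS$, and Theorem \ref{Homconfigclassification} ensures that there are no crossings and that every arc in $\tS$ has precisely $|w|-1$ inner-isolated vertices. The assumption that every arc has an overarc then yields an infinite ascending chain of nested arcs starting from any $\arc{a} \in \tS$, whose target and source must diverge to $-\infty$ and $+\infty$ respectively (they are strictly monotone integer sequences of strictly increasing-length admissible arcs). Consequently, given any admissible arc $\tau = (t,u)$, there exists $\arc{b} = (p,q) \in \tS$ with $q < u < t < p$, i.e.\ an overarc of $\tau$ in $\tS$.

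The second, main step is to show that such a $\tau$ can be produced from $\tX$ by iterated admissible Ptolemy operations. I would proceed by a double induction: outer induction on $l(\arc{b})$, where $\arc{b}$ is a \emph{smallest} overarc of $\tau$ in $\tS$, and inner induction on the "gap" $(t - (q+1)) + ((p-1) - u)$ between $\tau$ and $\arc{b}$. The base case $\tau = \arc{b}$ is immediate since $\arc{b} \in \tS \subset \tX$. For the inductive step, the key resource is that $\tX$ contains not only $\tS$ but also all its shifts $\Sigma^j\tS$ for $j = w+1,\ldots,0$, which translate the ambient pattern to the right by $0, 1, \ldots, |w|-1$. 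Combining $\arc{b}$ (or an appropriate shift of it) with either an inner arc of $\arc{b}$ in $\tS$ (and/or its shift) or with an adjacent shift of $\arc{b}$ itself, and applying Proposition \ref{prop:extensions} to locate the admissible Ptolemy arcs produced, one peels off arcs that are either shorter or have smaller gap than $\tau$, so the inductive hypothesis applies. Handling the two alternatives for the endpoints of $\tau$ inside $\arc{b}$ (endpoint of an inner arc of $\arc{b}$, or inner-vertex/inner-isolated vertex of $\arc{b}$) is done by cases matching those in Figure \ref{fig:ptolemysigma}, using the fact that the $|w|-1$ inner-isolated vertices provide exactly the "slack" needed to realise the necessary shifts within $\tX$.

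The main obstacle I anticipate is the combinatorial bookkeeping of admissibility: each Ptolemy arc of class I is admissible only when the mod-$d$ congruence of its length works out (cf.\ Lemma \ref{lem:admissibleclassI}), and this is precisely where the role of having $|w|-1$ available shifts in $\tX$ (no more, no less) becomes essential. Once the construction is complete for all $\tau$, Theorem \ref{prop:computationofextensions} gives $\tau \in \ind{\extn{\sX}}$, and then Lemma \ref{lem:closedsummands} delivers condition (d).
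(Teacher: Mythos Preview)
Your approach is plausible and broadly correct in spirit, but it takes a different and more laborious route than the paper's argument. The paper makes one key simplification you miss: it first observes that it suffices to show every admissible arc of \emph{minimum length} $|w|$ lies in $\bigcup_{n\geq 1}(\sX)_n^\oplus$, since longer admissible arcs are then obtained from neighbouring minimum-length arcs via Ptolemy arcs of class II. This reduction dramatically cuts down the combinatorics: for a given minimum-length arc $(x,x+w)$, the paper takes the arc $\arc{a}_0\in\tS$ for which $x+w$ is an inner-vertex, lists the inner-arcs $\arc{a}_1,\ldots,\arc{a}_r$ (or $\arc{a}_s$) of $\arc{a}_0$, and then explicitly chooses shifts $\Sigma^{k_i}\arc{a}_i\in\tX$ so that consecutive shifted arcs are neighbouring and their successive Ptolemy-II combinations land exactly on $(x,x+w)$. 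The fact that each arc in $\tS$ has precisely $|w|-1$ inner-isolated vertices is exactly what guarantees $w+1\leq k_i\leq 0$, so all shifts stay inside $\tX$. No double induction, no class~I Ptolemy arcs, and hence none of the admissibility bookkeeping you anticipate.

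Your double induction on $(l(\arc{b}),\text{gap})$ could in principle be made to work, but as stated it is a bit loose: the expression you give for the gap, $(t-(q+1))+((p-1)-u)$, simplifies to $l(\arc{b})+l(\tau)-2$, which is not really a ``gap'' (though it does decrease when $l(\tau)$ decreases with $\arc{b}$ fixed, so the induction is still well-founded). More seriously, the phrase ``one peels off arcs that are either shorter or have smaller gap'' does not yet specify which two arcs $\arc{c},\arc{d}$ in $\overline{\tX}$ have $\tau$ as their Ptolemy arc and why the inductive hypothesis applies to them; making this precise would require essentially the same case analysis the paper carries out, but for arbitrary $\tau$ rather than just minimum-length ones. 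So your plan is sound, but the paper's reduction to minimum-length arcs buys a much shorter and more transparent argument.
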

\begin{proof}
Let $\sS$ be a $w$-Riedtmann configuration in $\Tw$ with no outer-arcs. Then it is clear that every vertex of the $\infty$-gon has an overarc in $\tS$. 

If $\bigcup\limits_{n \geq 1} (\sX)_n^\oplus$ contains every admissible arc of minimum length, then we have $\bigcup\limits_{n \geq 1} (\sX)^\oplus_n = \Tw$, and so $\sS$ is a $w$-simple-minded system. Hence, given $x \in \bZ$, we want to show that $(x, x+w) \in \bigcup\limits_{n \geq 1} (X)_n^\oplus$. By above, $x+w$ is an inner-vertex of an arc $\arc{a}_0$ in $\tS$. 

\Case{$\arc{a}_0$ is not an overarc of $x$} Then $x+w < s(\arc{a}_0) < x$. Let $\{\mathtt{a}_i\}_{i=1}^r$ be the sequence of inner-arcs of $\arc{a}_0$ in $\tS$, ordered from left to right. Note that we must have $s(\arc{a}_r) \leq x+w$. Now consider the sequence of arcs $\{\Sigma^{k_i} \arc{a}_i\}_{i=0}^r$ such that $t(\Sigma^{k_i} \arc{a}_i) = t(\Sigma^{k_{i+1}} \arc{a}_{i+1})-1$, $s(\Sigma^{k_0} \arc{a}_0) = x$ and $s(\Sigma^{k_r} \arc{a}_r) = x+w$. Recall that, since $\sS$ is a $w$-Riedtmann configuration, any arc in $\tS$ has precisely $|w|-1$ inner-isolated vertices. Therefore, $w+1 \leq k_i \leq 0$, for all $i = 0, \ldots, r$, and so the arcs $\{\Sigma^{k_i} \arc{a}_i\}_{i=0}^r$ lie in $\sX = \bigcup\limits_{j= w+1}^0 \Sigma^i \sS$. It follows by Theorem \ref{prop:computationofextensions} that $(x, s(\arc{a}_i)) \in (\tX)^\oplus_{1+i}$, for $i = 1, \ldots, r$. In particular, $(x, x+w) \in (\tX)^\oplus_{1+r} \subseteq \bigcup\limits_{n \geq 1} (\tX)^\oplus_n$. 

\Case{$\mathtt{a}_0$ is an overarc of $x$} Let $\{\arc{a}_i\}_{i= 0}^s$, be the sequence of inner-arcs of $\arc{a}_0$ in $\tS$, ordered from left to right, and let $r < s$ such that $s(\arc{a}_r) \leq x +w$ and $t(\arc{a}_{r+1}) > x + w$. Note that $t(\arc{a}_{r+1}) \leq x$, as $\arc{a}_0$ has precisely $|w|-1$ inner-isolated vertices. Now consider the sequence of arcs $\{ \Sigma^{k_i} \mathtt{a}_i\}_{i= 0}^s$ such that $s(\Sigma^{k_r} \arc{a}_r) = x+w$, $t(\Sigma^{k_{r+1}} \arc{a}_{r+1}) = x$, $s(\Sigma^{k_i} \arc{a}_i) = t(\Sigma^{k_{i+1}} \arc{a}_{i+1}) -1$, for $i \in \{1, \ldots, s-1\} \setminus \{r\}$, $s(\Sigma^{k_0} \arc{a}_0) = s(\Sigma^{k_s} \arc{a}_s) +1$ and $t(\Sigma^{k_0} \arc{a}_0) = t(\Sigma^{k_1} \arc{a}_1) - 1$. Note that, as in Case 1, these arcs lie in $\tX$. Now, by Theorem \ref{prop:computationofextensions}, we have $(s(\Sigma^{k_0} \arc{a}_0), s(\Sigma^{k_i} \arc{a_i})) \in (\sX)^\oplus_{i+1}$, for $1 \leq i \leq r$, and $(t(\Sigma^{k_j} \arc{a}_j), x+w) \in (\tX)^\oplus_{r+2+s-j}$, for $r+1 \leq j \leq s$. In particular, $(x,x+w) \in (\tX)^\oplus_{s+1} \subseteq \bigcup\limits_{n \geq 1} (\tX)^\oplus_n$, and we are done.
\end{proof}

\begin{remark}\label{rem:Dugasquestion}
In $\sT_{-1}$, any $(-1)$-Riedtmann configuration with an outer-arc is an example of a $(-1)$-Riedtmann configuration which is not a simple-minded system. This gives a negative answer to Dugas' question whether Riedtmann configurations coincide with simple-minded systems \cite{Dugas12}. 
\end{remark}

\begin{remark}\label{rem:simplemindedcollections}
Let $\sS$ be a $w$-orthogonal set of indecomposable objects in a triangulated category $\sT$. Note that condition (d) (cf. definition of $w$-simple-minded system) is not equivalent to saying $\thick{}{\sS} = \sT$. Indeed, whilst $\bigcup\limits_{n \geq 1} (\sX)_n^\oplus \subseteq \thick{}{\sS}$, where $\sX = \bigcup\limits_{i=w+1}^0 \Sigma^i \sS$, the converse is, in general, not true. For instance, given any collection $\sS^\prime$ of arcs in $\Tw$ with no crossings and with at least one arc of length $|w|$, we have $\thick{}{\sS^\prime} = \Tw$. Any $w$-Hom configuration in $\Tw$ would satisfy this property. However, if there is one arc in $\sS^\prime$ with more than $|w|-1$ inner-isolated vertices, then $\sS^\prime$ is not a $w$-Riedtmann configuration, and therefore it does not satisfy condition (d) either.
\end{remark}

\subsection{Noncrossing partitions in $\sT_{-1}$}

We can give a different combinatorial classification of $(-1)$-simple-minded systems in $\sT_{-1}$, in terms of noncrossing partitions of $\mathbb{Z}$.

\begin{definition}[{\cite[Definition 7.1]{CS3}}]
A \textit{noncrossing partition of $\mathbb{Z}$} is a set of pairwise disjoint non-empty subsets, called \textit{blocks}, of vertices of the $\infty$-gon, whose union is all of $\mathbb{Z}$, and such that no two blocks cross each other, i.e. if $a$ and $b$ belong to one block and $x$ and $y$ to another, we cannot have $a < x < b < y$. 
\end{definition}

In \cite[Propositions 7.2 and 7.3]{CS3}, (-1)-Hom- and (-1)-Riedtmann-configurations in $\sT_{-1}$ were also classified in terms of noncrossing partitions of $\bZ$. 

We will now recall the maps used in this classification, as we will also use them to classify simple-minded systems in $\sT_{-1}$. 

Let $\mathbb{Z}^\prime\coloneqq \{2n+0.5 \mid n \in \bZ\}$. Given a $(-1)$-Hom configuration $\sS$ in $\sT_{-1}$, we define $\mathsf{nc}(\sS)$ to be the partition $\mathcal{P} = \{B_1, B_2, \ldots\}$ of $\mathbb{Z}^\prime$, where the vertices in a block $B =\{x_1, x_2, \ldots\}$ are written in numerical order, i.e. $x_1 < x_2 < \ldots$, and are defined recursively as follows: given $x_i$, if $x_i + 0.5$ is the target of an arc $\arc{a}$ in $\sS$, then $x_{i+1}\coloneqq s(\arc{a})+0.5$. Otherwise, we take $x_i$ to be the last element of its block. This defines a map $\mathsf{nc}$ from the class of $(-1)$-Hom-configurations (seen as collections of admissible arcs satisfying the conditions in Theorem \ref{thm:Hom-config}) to the class of noncrossing partitions of the integers.

Given a noncrossing partition $\cP$ of $\bZ^\prime$, its \textit{Kreweras complement} $\sK (\cP)$ is the unique maximal partition of $\bZ^{\prime \prime} \coloneqq \{2n-0.5 \mid n \in \bZ\}$ such that $\cP \cup \sK (\cP)$ is a noncrossing partition of $\bZ^\prime \cup \bZ^{\prime \prime}$.

The following proposition characterizes simple-minded systems in $\sT_{-1}$ in terms of noncrossing partitions.

\begin{proposition}
A $(-1)$-Riedtmann configuration $\sS$ in $\sT_{-1}$ is a simple-minded system if and only if $\mathsf{nc}(\sS)$ and $\sK \mathsf{nc}(\sS)$ have no infinite block.  
\end{proposition}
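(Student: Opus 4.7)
The plan is to invoke Theorem~\ref{thm:classificationsms}, which characterizes $(-1)$-simple-minded systems among $(-1)$-Riedtmann configurations as those in which every arc of $\tS$ has an overarc. Thus the task reduces to showing: for a $(-1)$-Riedtmann configuration $\sS$, every arc of $\tS$ has an overarc if and only if both $\mathsf{nc}(\sS)$ and $\sK\mathsf{nc}(\sS)$ have only finite blocks.

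The first step is to unwind the recursive definition of $\mathsf{nc}(\sS)$ and describe the blocks structurally. A block $B=\{x_1<x_2<\cdots\}$ of $\mathsf{nc}(\sS)$ records a maximal ``rightward-adjacent'' chain of arcs $\arc{a}_1,\arc{a}_2,\ldots$ of $\sS$ whose targets are odd integers, with $t(\arc{a}_i)=x_i+0.5$ and $t(\arc{a}_{i+1})=s(\arc{a}_i)+1$; the block is infinite exactly when this chain is unbounded in at least one direction. The blocks of $\sK\mathsf{nc}(\sS)$ admit a parallel description: the defining maximality property of the Kreweras complement, combined with the Riedtmann hypothesis (which, by Theorem~\ref{thm:Riedtmann}, guarantees enough arcs to saturate the ``dual'' side), forces its blocks to correspond to the analogous adjacent chains built from arcs of $\sS$ with even target.

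For the forward direction, argue contrapositively. Suppose $\mathsf{nc}(\sS)$ has an infinite block, giving (WLOG) a rightward-unbounded adjacent chain $\arc{a}_1,\arc{a}_2,\ldots$ in $\tS$. A case analysis mirroring the proof of Lemma~\ref{lem:arcssms}, exploiting the rigid adjacency $t(\arc{a}_{i+1})=s(\arc{a}_i)+1$ together with the no-crossings condition of Theorem~\ref{thm:classificationsms}(1), shows that the sequence of smallest overarcs $\arc{b}_i$ of the $\arc{a}_i$ cannot consistently exist: either two adjacent would-be overarcs are forced to strictly cross, or at some stage an $\arc{a}_i$ in the chain admits no admissible overarc at all, contradicting Theorem~\ref{thm:classificationsms}(3). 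The analogous argument, using the opposite-parity chains, handles the case where $\sK\mathsf{nc}(\sS)$ has an infinite block.

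For the converse, assume both partitions have only finite blocks. Given an arbitrary $\arc{a}\in\tS$, consider the adjacent chain through $\arc{a}$ in the partition whose target parity matches; by finiteness, this chain terminates at a leftmost arc $\arc{a}^\ell$ and a rightmost arc $\arc{a}^r$. The Riedtmann condition (every vertex is covered) forces the existence of arcs of opposite target parity immediately flanking $\arc{a}^\ell$ and $\arc{a}^r$, and iterated admissible Ptolemy closure via Theorem~\ref{prop:computationofextensions} then assembles an arc of $\tS$ strictly containing $\arc{a}$---an overarc. The most delicate step will be establishing rigorously the dictionary between the blocks of $\sK\mathsf{nc}(\sS)$ and the dual-parity chains: unlike $\mathsf{nc}(\sS)$, which is read off directly from the arcs, the Kreweras complement is defined abstractly by a maximality property whose behaviour on the infinite set $\bZ^{\prime\prime}$ requires careful unpacking into the intended combinatorial picture, especially to pin down uniqueness where the classical finite construction does not directly apply.
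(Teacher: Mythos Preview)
Your reduction to Theorem~\ref{thm:classificationsms} is correct, but both directions of the proposed argument have problems, and the paper's route is far more direct.

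For the converse, the plan to ``assemble an arc of $\tS$'' via Theorem~\ref{prop:computationofextensions} is a genuine error: that theorem describes $\ind{\extn{\sX}}$, not $\tS$ itself. Ptolemy closure produces arcs in the extension closure, and an overarc lying only in $\extn{\sS}$ does nothing for condition~(3) of Theorem~\ref{thm:classificationsms}, which requires the overarc to belong to $\tS$. The paper avoids this entirely: given $\arc{a}=(t,u)$, the points $u-0.5$ and $t+0.5$ lie in the same block $B$ (of $\mathsf{nc}(\sS)$ or of $\sK\mathsf{nc}(\sS)$, depending on parity); if $B$ is finite with minimum $x$ and maximum $y$, then the single arc $(y+0.5,\,x-0.5)$ already lies in $\sS$ and is an overarc of $\arc{a}$. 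No Ptolemy operations, no flanking arcs, no iteration.

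For the forward direction, your mechanism (``adjacent would-be overarcs are forced to strictly cross'') is not what actually happens. The overarcs of the $\arc{a}_i$ could perfectly well be nested without contradiction. The correct observation --- and this is what the paper uses --- is that a \emph{single} overarc $\arc{b}$ of any one arc in the chain must, by the no-crossing condition and the adjacency $t(\arc{a}_{i+1})=s(\arc{a}_i)+1$, contain every subsequent $\arc{a}_i$ as well; hence the block is bounded by $s(\arc{b})$, contradicting unboundedness. (The paper phrases this as: the existence of an overarc implies the block is finite.) There is no need for a case analysis in the style of Lemma~\ref{lem:arcssms}, whose induction on $(\sX)_n$ is irrelevant here since we are working inside $\tS$, not its extension closure.

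Finally, your proposed ``delicate step'' of identifying the blocks of $\sK\mathsf{nc}(\sS)$ with dual-parity chains is indeed needed, but once you have it, the argument above is a few lines; the elaborate machinery you outline is unnecessary.
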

\begin{proof}
Suppose $\sS$ is a simple-minded system and assume that $\mathsf{nc}(\sS)$ has an infinite block $B$. Let $x \in B$. Since $\sS$ is a $(-1)$-Riedtmann configuration, $B$ has no upper bound (cf. \cite[Proposition 7.3]{CS3}), and so there is $y > x \in B$. We may assume that $y$ is the smallest element satisfying this condition, and therefore $\arc{a} \coloneqq (y-0.5,x+0.5) \in \sS$. By hypothesis, $\arc{a}$ has an overarc, which implies that $B$ is finite, a contradiction. The proof that $\sK \mathsf{nc}(\sS)$ has no infinite block is analogous. 

Now suppose $\mathsf{nc}(\sS)$ and $\sK \mathsf{nc}(\sS)$ have no infinite block, and let $\arc{a} = (t,u)$ be an arc in $\sS$. Then $u-0.5$ and $t+0.5$ belong to the same block $B$ of either $\mathsf{nc}(\sS)$ or $\sK \mathsf{nc}(\sS)$. By hypothesis, $B$ is finite. Let $x$ and $y$ be the minimum and maximum element of $B$, respectively. Then we must have $\arc{b} \coloneqq (y+0.5, x-0.5) \in \sS$. Since $y \geq t+0.5$ and $x \leq u-0.5$, $\arc{b}$ is an overarc of $\arc{a}$, which proves that $\sS$ is a simple-minded system.   
\end{proof}

\section{Mutations in $\Tw$}

We now study the behaviour of mutations of $w$-Hom configurations, $w$-Riedtmann configurations and $w$-simple-minded systems in $\Tw$. We shall see that, when $w \leq -2$, the behaviour of the latter two classes of objects is very reminiscent of that of (weakly) $(|w|-1)$-cluster-tilting subcategories in $\sT_{|w|}$ studied in \cite{HJ2}.

\subsection{Number of replacements} Firstly, we shall use the combinatorial classifications to identify the number of replacements of a given indecomposable object.
 
\begin{proposition}\label{prop:mutationsT-m}
Let $\sS$ be a $w$-Hom configuration in $\Tw$, $k$ be the number of outer-isolated vertices, and $\arc{s}$ an arc in $\tS$.
\begin{compactenum}
\item If $\tU$ is a set of admissible arcs not in $\tS \setminus \{\arc{s}\}$ such that $\tS \setminus \{\arc{s}\} \cup \tU$ is a $w$-Hom configuration, then $\tU = \{\arc{s}^*\}$ for a single admissible arc $\arc{s}^*$. 
\item If $\arc{s}$ has an overarc in $\tS$, then there are $|w|$ choices for $\arc{s}^*$.
\item If $\arc{s}$ has no overarc in $\tS$, then there are $k+1$ choices for $\arc{s}^*$.   
\end{compactenum}
\end{proposition}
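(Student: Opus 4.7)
The plan is to analyze directly how removing $\arc{s}$ degrades the Hom configuration conditions and then count the admissible arcs whose addition restores them. Central to the argument is the set $V := \{s(\arc{s}), t(\arc{s})\} \cup \{\text{inner-isolated vertices of } \arc{s} \text{ in } \tS\}$, which has size $|w|+1$ by condition (2) of Theorem~\ref{thm:Hom-config}. Since condition (1) there forbids two arcs from sharing any vertex, the elements of $V$ are isolated in $\tS \setminus \{\arc{s}\}$ and disjoint from the endpoints of every other arc of $\tS$.

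First I would handle Case (2), where $\arc{s}$ has a smallest overarc $\arc{p}$ in $\tS$. Removing $\arc{s}$ causes every vertex of $V$ to acquire $\arc{p}$ as its new smallest overarc, while every other arc's inner-isolated count is unchanged; hence $\arc{p}$'s inner-isolated count jumps from $|w|-1$ to $2|w|$. List these $2|w|$ vertices in increasing order as $w_1 < \cdots < w_{2|w|}$. For $\tS \setminus \{\arc{s}\} \cup \tU$ to satisfy condition (2), some $\arc{s}^* \in \tU$ must be immediately inside $\arc{p}$ with both endpoints in $\{w_1, \ldots, w_{2|w|}\}$, since any other placement either creates a crossing or shared endpoint, or does not affect $\arc{p}$'s count. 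Writing $\arc{s}^* = (w_b, w_a)$, its inner-isolated vertices are precisely $w_{a+1}, \ldots, w_{b-1}$, so applying condition (2) to $\arc{s}^*$ forces $b - a = |w|$ and hence $a \in \{1, \ldots, |w|\}$. For part (1), I would argue that $\tU$ contains no further arc: any second arc would either shrink some arc $\arc{d}$'s already-correct inner-isolated count $|w|-1$ by a further $|w|+1$ (forcing a negative value), or form a new outer-arc drawing $|w|+1$ vertices from a set of at most $|w|$ outer-isolated vertices (condition (3)).

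The hard part will be verifying admissibility of the $|w|$ candidates $(w_{a+|w|}, w_a)$. The plan is to observe that for each $\ell$, the open interval $(w_\ell, w_{\ell+1})$ is tiled, without gaps and in order, by the intervals $[t(\arc{c}), s(\arc{c})]$ of the arcs $\arc{c}$ immediately inside $\arc{p}$ in $\tS \setminus \{\arc{s}\}$ that lie in $(w_\ell, w_{\ell+1})$, since every vertex strictly between $w_\ell$ and $w_{\ell+1}$ must be inside some such $\arc{c}$. This gives $w_{\ell+1} - w_\ell = 1 + \sum_{\arc{c}} (l(\arc{c})+1)$; admissibility of each $\arc{c}$ means $l(\arc{c})+1 \equiv 0 \pmod{|w|+1}$, so $w_{\ell+1} - w_\ell \equiv 1 \pmod{|w|+1}$, and telescoping yields $w_{a+|w|} - w_a \equiv |w| \pmod{|w|+1}$, which is exactly the admissibility condition for $\arc{s}^*$. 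A direct check against Theorem~\ref{thm:Hom-config} then confirms that each resulting configuration is a Hom configuration. Case (3) follows the same template: after removing $\arc{s}$, the $|w|+1$ vertices of $V$ enlarge the outer-isolated set from $k$ to $k+|w|+1$; listing these as $o_1 < \cdots < o_{k+|w|+1}$ and replacing ``inside $\arc{p}$'' by ``outer-arc'' throughout, the same analysis forces $\arc{s}^* = (o_{a+|w|}, o_a)$ with $a \in \{1, \ldots, k+1\}$, and the tiling argument applied to the outer-arcs of $\tS \setminus \{\arc{s}\}$ between consecutive $o_\ell$'s yields admissibility, for exactly $k+1$ choices.
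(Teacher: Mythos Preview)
Your argument is correct and follows the same overall strategy as the paper: identify the set of isolated vertices created by removing $\arc{s}$, show that any replacement arc must join two of them at distance $|w|$ in the list, and check that the candidates are admissible and pairwise incompatible. The main difference is organisational. The paper treats the overarc case briefly (``It is easy to check'') but then, in the no-overarc case, runs through an explicit case analysis on the possible endpoint types of $\arc{s}^*$ (pairs drawn from $\{x_i\}$, $\{v_j\}$, $\{t,u\}$), ruling each bad shape out separately and finally listing the valid arcs according to the position of the outer-isolated vertices relative to $\arc{s}$. You instead handle both cases with a single framework: list the relevant isolated vertices in order, force $\arc{s}^* = (w_{a+|w|},w_a)$ or $(o_{a+|w|},o_a)$ by the inner-isolated count, and verify admissibility uniformly via the tiling observation that consecutive isolated vertices differ by $1$ modulo $|w|+1$. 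This buys you a cleaner and shorter argument, at the cost of suppressing the concrete description of which arcs actually arise; the paper's casework, while longer, makes the geometry of the replacements more explicit (and is used implicitly later when computing left mutations). Both routes rely on exactly the same ingredients from Theorem~\ref{thm:Hom-config}.
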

\begin{proof}
Let $\tS^\prime \coloneqq \tS \setminus \{\arc{s}\} \cup \tU$ be a $w$-Hom configuration, and $\arc{s}^* \in \tU$.

Suppose $\arc{s} = (t,u)$ has an overarc in $\tS$, and let $\arc{a}$ be the smallest one. Since, $\sS$ is a $w$-Hom-configuration, $\arc{s}$ and $\arc{a}$ have each precisely $|w|-1$ inner-isolated vertices. Therefore, if we remove $\arc{s}$ from $\tS$, there are $2|w|$ isolated vertices whose smallest overarc is $\arc{a}$. Let us label these vertices in numerical order by $x_1, x_2, \ldots, x_{2|w|}$. It is easy to check that the $(x_{|w|+i}, x_i)$, with $i = 1, \ldots, |w|$, are the admissible arcs that can lie in $\tU$. Note that a pair of such arcs would cross, hence $\tU$ must be a singleton. Moreover, there are $|w|$ such admissible arcs, and so (2) follows.   

Now suppose $\arc{s} = (t,u)$ has no overarc in $\tS$. Let $v_1, v_2, \ldots, v_k$ be the outer-isolated vertices in $\tS$, $x_1, x_2, \ldots, x_{|w|-1}$ be the inner-isolated vertices of $\arc{s}$ in $\tS$, and suppose they are ordered numerically. Then $V  \coloneqq \{x_1, \ldots, x_{|w|-1}\} \cup \{v_1, \ldots, v_k\} \cup \{u, t\}$ is the set of outer-isolated vertices in $\tS \setminus \{\arc{s}\}$. Given that $\tS^\prime$ is a $w$-Hom configuration, the endpoints of $\arc{s}^*$ must lie in $V$.

\Case{$\arc{s}^* = (x_j, x_i)$ or $\arc{s}^* = (t, x_i)$ or $\arc{s}^* = (x_i, u)$} 
In this case, $\arc{s}^*$ would have fewer than $|w|-1$ inner-isolated vertices in $\tS^\prime$, contradicting the fact that $\tS^\prime$ is a $w$-Hom configuration. 

\Case{$\arc{s}^* = (v_j, v_i)$}
There are no strict-crossings between $\arc{s}^*$ and any arc in $\tS$, contradicting the maximality of $\tS$. 

\Case{$\arc{s}^* = (v_j,t)$ or $\arc{s}^* = (u, v_j)$} 
Since $\sS^\prime$ is a $w$-Hom configuration, $\arc{s}^*$ has precisely $|w|-1$ inner-isolated vertices. Note that these vertices must be outer-isolated vertice in $\tS$. Therefore, we must have $k = |w|$ and either $t < v_1$ and $\arc{s}^* = (v_m, t)$, or $v_m < u$ and $\arc{s}^* = (u,v_1)$.

\Case{$\arc{s}^* = (t, v_i)$}
On one hand, since $\arc{s}$ and $\arc{s}^*$ are admissible, there are $l (|w|+1)- 1$ vertices in $]v_i,u[$, for some $l \geq 1$. On the other hand, there is no arc crossing $(t,v_i)$ in $\tS$, since $\tS^\prime$ is a $w$-Hom configuration. These two facts imply that $\tS$ has $|w|$ outer-isolated vertices in $]v_i,u[$, and therefore there are at least $|w|+1$  outer-isolated vertices in $\tS$, a contradiction. 

\Case{$\arc{s}^* = (v_j, u)$}
The same argument as in the previous case holds. 

\medskip

Therefore, each arc in $\tU$, besides $\arc{s}$, must be of the form $(x_i,v_j)$, $(v_j,x_i)$, $(v_{|w|},t)$ in the case when $k = |w|$ and $t < v_1$ or $(u,v_1)$ in the case when $k = |w|$ and $v_{|w|} < u$. 

Let $v_j$ be the largest outer-isolated vertex in $\tS$ which is smaller than $u$ (if it exists). Note that the number of vertices in the intervals (whenever they make sense): 
\[
]v_i,v_{i+1}[, ]v_j,u[, ]u,x_1[, ]x_{l},x_{l+1}[, ]x_{|w|-1},t[, ]t,v_{j+1}[,
\]
with $i \in \{1, \ldots, j-1, j+1, \ldots, k-1\}$ and $l \in \{1, \ldots, |w|-2\}$, is a multiple of $|w|+1$. 

Hence, if $v_j$ does not exist, or in other words $t < v_1$, the only admissible arcs incident with $V$ are of the form $(v_i,x_i)$, for $i = 1, \ldots, k$. Here we let $x_{|w|} \coloneqq t$, in the case when $k = |w|$. Similarly, if $j = k$, i.e. $v_k < u$, the only admissible arcs incident with $V$ are $(x_{|w|-i}, v_{k-i+1})$ for $i = 1, \ldots, k$. Here we let $x_0 := u$, in the case when $k = |w|$. Finally, if $1 \leq j \leq k-1$, then the only admissible arcs which are incident with $V$ are: $(x_{|w|-i},v_{j-i+1})$, with $i = 1, \ldots, j$, and $(v_{j+i}, x_i)$, with $i = 1, \ldots, k-j$. 

In each case we have precisely $k$ admissible arcs. If $\tA$ denotes the set of these arcs together with $\arc{s}$, note that any pair of arcs in $\tA$ is strict-crossing. Therefore $\tU$ is again a singleton and there are $k +1$ choices for $\arc{s}^*$. 
\end{proof}

\begin{remark}
One can deduce from the proof of Proposition \ref{prop:mutationsT-m} that the number of outer-isolated vertices is preserved under mutation. Therefore, the mutation graph of $w$-Hom configurations is disconnected; cf.~\cite{Aihara-Iyama}.
\end{remark}

Given a $w$-Hom configuration $\sS$ in $\Tw$, and $s \in \sS$, we say that a $w$-Hom configuration of the form $\sS \setminus \{s\} \cup \{s^*\}$, with $s^* \neq s$, is a \textit{mutation of $\sS$ at $s$}. The object $s^*$ is called a \textit{replacement of $s$}. 

\begin{corollary}\label{cor:replacementssms}
Let $\sS$ be a $w$-simple-minded system in $\Tw$, and $s \in \sS$. Then $s$ has precisely $|w|-1$ replacements.
\end{corollary}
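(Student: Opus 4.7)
The plan is to combine the combinatorial characterisation in Theorem~\ref{thm:classificationsms} with the counting in Proposition~\ref{prop:mutationsT-m}(2). Since $\sS$ is a $w$-simple-minded system, every arc in $\tS$ has an overarc; in particular $\arc{s}$ has a smallest overarc $\arc{a}\in\tS$. Proposition~\ref{prop:mutationsT-m}(2) then provides exactly $|w|$ admissible arcs $\arc{s}^*$ such that $\tS\setminus\{\arc{s}\}\cup\{\arc{s}^*\}$ is a $w$-Hom configuration, so the task splits into (a) identifying $\arc{s}$ itself among the $|w|$ candidates, and (b) verifying that each of the remaining $|w|-1$ candidates produces a $w$-simple-minded system.

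For step~(a), let $x_1<\cdots<x_{2|w|}$ be the isolated vertices of $\tS\setminus\{\arc{s}\}$ whose smallest overarc is $\arc{a}$, as in the proof of Proposition~\ref{prop:mutationsT-m}(2); the candidate replacements are the arcs $(x_{|w|+i},x_i)$ for $i=1,\ldots,|w|$. Deleting $\arc{s}=(t,u)$ from $\tS$ turns its two endpoints together with its $|w|-1$ inner-isolated vertices into $|w|+1$ isolated vertices under $\arc{a}$ whose smallest overarc becomes $\arc{a}$; any other vertex strictly between $u$ and $t$ is either non-isolated or has a proper sub-arc of $\arc{s}$ as overarc, hence does not enter the list $x_1,\ldots,x_{2|w|}$. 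These $|w|+1$ vertices therefore occupy $|w|+1$ consecutive entries of that list; writing $u=x_{i_0}$ yields $t=x_{|w|+i_0}$, so $\arc{s}=(x_{|w|+i_0},x_{i_0})$ appears as the candidate indexed by $i=i_0\in\{1,\ldots,|w|\}$.

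For step~(b), I would check condition~(3) of Theorem~\ref{thm:classificationsms} for each candidate $\arc{s}^*\neq\arc{s}$. The new arc $\arc{s}^*=(x_{|w|+i},x_i)$ has both endpoints strictly between those of $\arc{a}$, so $\arc{a}$ is an overarc of $\arc{s}^*$. Any other arc $\arc{b}\in\tS\setminus\{\arc{s}\}$ had an overarc in $\tS$: if that overarc is distinct from $\arc{s}$ it persists in the new configuration; if it equals $\arc{s}$ then $\arc{b}$ lies strictly inside $\arc{s}$ and hence strictly inside $\arc{a}$, so $\arc{a}$ still overarcs $\arc{b}$. Every arc in $\tS\setminus\{\arc{s}\}\cup\{\arc{s}^*\}$ thus has an overarc, and Theorem~\ref{thm:classificationsms} delivers the $w$-simple-minded conclusion, giving exactly $|w|-1$ replacements of $\arc{s}$.

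The main obstacle is the bookkeeping in step~(a): one must pin down that the $|w|+1$ vertices freed under $\arc{a}$ by deleting $\arc{s}$ occupy $|w|+1$ consecutive positions in the enumeration $x_1,\ldots,x_{2|w|}$, so that $\arc{s}$ itself is recognised as one of the $|w|$ candidate arcs supplied by Proposition~\ref{prop:mutationsT-m}(2). Without isolating this trivial candidate, the count would erroneously report $|w|$ rather than $|w|-1$; steps~(b) and the $w$-simple-minded check are then purely formal, depending only on the persistence of $\arc{a}$ as a common overarc for everything that previously sat under $\arc{s}$.
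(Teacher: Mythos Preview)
Your argument is correct and follows the same route as the paper, which simply invokes Proposition~\ref{prop:mutationsT-m}(2) together with the fact (from Theorem~\ref{thm:classificationsms}) that every arc in a $w$-simple-minded system has an overarc. Two simplifications are worth noting. First, your step~(a) is unnecessary: since $\sS$ itself is a $w$-Hom configuration, $\arc{s}$ is trivially one of the $|w|$ candidates $\arc{s}^*$ in Proposition~\ref{prop:mutationsT-m}, so no combinatorial bookkeeping is required to locate it among the $(x_{|w|+i},x_i)$. Second, your step~(b) proves more than the corollary asks: by definition a \emph{replacement} of $s$ is an $s^*\neq s$ such that $\sS\setminus\{s\}\cup\{s^*\}$ is a $w$-Hom configuration, not necessarily a $w$-simple-minded system, so Proposition~\ref{prop:mutationsT-m}(2) already delivers the count $|w|-1$ directly. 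Your verification that the mutated configurations remain $w$-simple-minded is a useful additional observation, but it is not part of the statement being proved.
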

\begin{proof}
This follows immediately from Proposition \ref{prop:mutationsT-m} and from the fact that there are no outer-arcs in $\tS$.
\end{proof}

\begin{corollary}\label{cor:replacementsl}
Given $0 \leq l \leq |w|-1$, there is a $w$-Riedtmann configuration $\sS$ in $\Tw$ and $s \in \sS$ such that $s$ has precisely $l$ replacements.
\end{corollary}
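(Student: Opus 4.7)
The plan is to prove the corollary by direct construction: for each $l \in \{0, 1, \ldots, |w|-1\}$, I will exhibit a $w$-Riedtmann configuration $\sS_l$ of $\Tw$ containing an outer-arc $\arc{s}$ such that $\tS_l$ has precisely $l$ outer-isolated vertices. Proposition~\ref{prop:mutationsT-m}(3) then produces $l+1$ choices for $\arc{s}^\ast$; exactly one of these is the trivial choice $\arc{s}^\ast = \arc{s}$ (the implicit convention also used in passing from Proposition~\ref{prop:mutationsT-m}(2) to Corollary~\ref{cor:replacementssms}), so $\arc{s}$ has precisely $l$ genuine replacements.

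Concretely, I would set $\arc{s} := (0, w)$, which is admissible since $u - t = w$, and for each $k \geq 1$ set
\[
\arc{b}_k := \bigl(k(|w|+1) + l,\ (k-1)(|w|+1) + 1 + l\bigr), \qquad \arc{c}_k := \bigl(-k(|w|+1),\ -k(|w|+1) + w\bigr).
\]
Each of these arcs satisfies $u - t = w$ and is therefore admissible; define $\tS_l := \{\arc{s}\} \cup \{\arc{b}_k : k \geq 1\} \cup \{\arc{c}_k : k \geq 1\}$.

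To finish, I would verify via Theorems~\ref{thm:Hom-config} and~\ref{thm:Riedtmann} that $\sS_l$ is a $w$-Riedtmann configuration: the closed intervals in $\bZ$ underlying the arcs in $\tS_l$ are pairwise disjoint and no two arcs share an endpoint, so condition (1) holds; each arc has length $|w|$, contributing exactly $|w|-1$ inner vertices none of which is incident with any other arc of $\tS_l$, so condition (2) holds; and a direct inspection shows that the only outer-isolated vertices of $\tS_l$ are the $l$ vertices $\{1, 2, \ldots, l\}$ in the gap between $\arc{s}$ and $\arc{b}_1$, which lies within the bound $|w|-1$ required by Theorem~\ref{thm:Riedtmann}. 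Since every $\arc{b}_k$ lies strictly to the right of $\arc{s}$ and every $\arc{c}_k$ strictly to the left, $\arc{s}$ has no overarc in $\tS_l$, so Proposition~\ref{prop:mutationsT-m}(3) applies with the parameter $k$ there equal to $l$ and delivers the desired count. The construction is entirely explicit; the only step requiring any care is the vertex-by-vertex check that no outer-isolated vertex hides outside the chosen gap to the right of $\arc{s}$.
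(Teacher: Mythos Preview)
Your proof is correct and follows essentially the same approach as the paper: choose a $w$-Riedtmann configuration with exactly $l$ outer-isolated vertices, pick an outer-arc $\arc{s}$, and apply Proposition~\ref{prop:mutationsT-m}(3). The paper's proof simply asserts the existence of such a configuration and outer-arc, whereas you supply an explicit construction; your vertex count and the verification against Theorems~\ref{thm:Hom-config} and~\ref{thm:Riedtmann} are accurate.
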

\begin{proof}
Take $\sS$ to be a $w$-Hom configuration with $l$ outer-isolated vertices and any outer-arc $\arc{s}$ in $\tS$. The result then follows from Proposition \ref{prop:mutationsT-m}.  
\end{proof}

\begin{corollary}\label{cor:replacementsriedtmann}
Let $\sS$ be a $w$-Riedtmann configuration in $\Tw$ and $s \in \sS$. Then $s$ has at most $|w|-1$ replacements. 
\end{corollary}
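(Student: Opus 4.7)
The plan is to deduce the corollary directly from Proposition~\ref{prop:mutationsT-m} together with the bound on outer-isolated vertices supplied by Theorem~\ref{thm:Riedtmann}. The key observation will be that the counts ``$|w|$ choices'' and ``$k+1$ choices'' in parts (2) and (3) of Proposition~\ref{prop:mutationsT-m} enumerate \emph{all} admissible arcs $\arc{s}^*$ (possibly equal to $\arc{s}$) for which $\tS \setminus \{\arc{s}\} \cup \{\arc{s}^*\}$ is a $w$-Hom configuration, whereas a replacement is required by definition to satisfy $\arc{s}^* \neq \arc{s}$. Since $\arc{s}$ itself is always one of the enumerated candidates (taking $\tU = \{\arc{s}\}$ recovers $\tS$, which is trivially a $w$-Hom configuration), the number of actual replacements is one less than the total count produced by the proposition.

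I would then split into two cases according to whether $\arc{s}$ possesses an overarc in $\tS$. In the first case, Proposition~\ref{prop:mutationsT-m}(2) yields $|w|$ candidates for $\arc{s}^*$; discarding $\arc{s}$ itself leaves at most $|w|-1$ replacements. In the second case, $\arc{s}$ is an outer-arc, so Theorem~\ref{thm:Riedtmann} forces the number $k$ of outer-isolated vertices of $\tS$ to satisfy $k \leq |w|-1$; Proposition~\ref{prop:mutationsT-m}(3) then gives $k+1$ candidates, and again discarding $\arc{s}$ leaves at most $k \leq |w|-1$ replacements. Combining the two cases yields the claimed bound.

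There is essentially no obstacle here, since the combinatorial heavy lifting has already been carried out in Proposition~\ref{prop:mutationsT-m}. The only slight subtlety will be verifying that $\arc{s}$ is genuinely among the enumerated candidates in each case: in the overarc case this follows because $\arc{s}$ has precisely $|w|-1$ inner-isolated vertices in $\tS$ and so corresponds to one of the arcs $(x_{|w|+i},x_i)$ appearing in the proof of Proposition~\ref{prop:mutationsT-m}(2); in the outer-arc case this is stated explicitly in the proof of Proposition~\ref{prop:mutationsT-m}(3), where the $k+1$ arcs are obtained by adjoining $\arc{s}$ to a set of $k$ arcs incident to outer-isolated vertices.
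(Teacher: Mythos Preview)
Your proposal is correct and is precisely the argument the paper has in mind; the paper states this corollary without proof because it follows immediately from Proposition~\ref{prop:mutationsT-m} and Theorem~\ref{thm:Riedtmann} in exactly the way you describe. Your observation that the counts in parts (2) and (3) of Proposition~\ref{prop:mutationsT-m} include $\arc{s}$ itself is the right reading, as confirmed by the proof of Corollary~\ref{cor:replacementssms} (where the $|w|$ choices of part~(2) yield $|w|-1$ replacements) and by the explicit statement in the proof of Proposition~\ref{prop:mutationsT-m}(3) that $\tA$ consists of the $k$ new arcs ``together with $\arc{s}$''.
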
 

\begin{remark}
There are precisely two classes of configurations in $\Tw$ with a `nice mutation behaviour', in the sense that the number of replacements of a given object is exactly $|w|-1$, namely the $w$-simple-minded systems and the $w$-Riedtmann configurations with $|w|-1$ outer-isolated vertices. 
\end{remark}

The purpose of the remainder of this paper is to convey to the reader that the natural analogue of $(|w|-1)$-cluster tilting subcategories in $\sT_{|w|}$ ($w \leq -2$) in negative Calabi-Yau triangulated categories is played by $w$-simple-minded systems. Informally speaking, we shall see that even though a $w$-Riedtmann configuration $\sS$ with $|w|-1$ outer-isolated vertices has a similar mutation behaviour to that of a $w$-simple-minded system, the mutations of $\sS$ do not have such a nice representation-theoretic description. 
 
\subsection{Functorial finiteness}
The extension closure of a $w$-Hom configuration will play a key role in its mutation theory. In this subsection we shall prove that the only $w$-Hom configurations for which the extension closure is functorially finite are the $w$-simple-minded systems. 

Let $\sC$ be any category and $\sA$ be a subcategory. A morphism $f\colon a \to c$, with $a \in \sA$, is called a \emph{right $\sA$-approximation} of $c$ if every map $g\colon a^\prime \to c$, with $a^\prime \in \sA$, factors through $f$. We say that $\sA$ is a \emph{contravariantly finite subcategory of $\sC$} if every object of $\sC$ admits a right $\sA$-approximation. There are dual notions of \emph{left $\sA$-approximation} and \emph{covariantly finite}. If $\sA$ is both contra- and covariantly finite, $\sA$ is called \emph{functorially finite}.

Let $\tA$ be a collection of admissible arcs in $\Tw$. An integer $t$ is called a \emph{left fountain} of $\tA$ if $\tA$ contains infinitely many arcs of the form $(t,s)$ (with $s < t$). Dually, one defines a \emph{right fountain} of $\tA$; a \emph{fountain} of $\tA$ is both a left fountain and a right fountain.

\begin{theorem}[{\cite[Corollary 6.2]{CSP}}]\label{cor:contravariant-finiteness}
Let $\sX$ be a subcategory of $\Tw$ and $\tX$ be the set of admissible arcs corresponding to $\ind{\sX}$. Then $\sX$ is 
\begin{compactenum}
\item contravariantly finite in $\Tw$ if and only if every left fountain in $\tX$ is also a right fountain,
\item covariantly finite in $\Tw$ if and only if every right fountain in $\tX$ is also a left fountain. 
\end{compactenum}
\end{theorem}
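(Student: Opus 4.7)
The plan is to convert the approximation-theoretic condition into a purely combinatorial condition on arcs using Proposition \ref{prop:hom-hammocks}. Since (2) follows from (1) by Serre duality $\Hom(x,y) \simeq D\Hom(y, \SSS x)$, I focus on (1). Fix an indecomposable object $c$ with arc $\arc{c}$. By Proposition \ref{prop:hom-hammocks}, the indecomposables $a$ with $\Hom(a, c) \neq 0$ are precisely those whose arcs lie in $\homto{c} \cup \homfrom{\SSS^{-1} c}$, and each such Hom-space is one-dimensional. A right $\sX$-approximation of $c$ then amounts to a finite collection $\{f_i : a_i \to c\}$ with $a_i \in \ind{\sX}$ such that every morphism from any indecomposable in $\sX$ to $c$ factors through some $f_i$.

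The next step is to describe factorization combinatorially. Using the mesh structure of the $\bZ \Ainf$ components forming the AR-quiver of $\Tw$, one shows that within each hammock region the nonzero maps between indecomposables obey a dominance order $\preceq$ reflecting the nesting of arc endpoints: a map $g : a \to c$ factors through $f : a' \to c$ precisely when $\arc{a} \preceq \arc{a}'$. Consequently, a right $\sX$-approximation of $c$ exists if and only if both $\tX \cap \homto{c}$ and $\tX \cap \homfrom{\SSS^{-1} c}$ admit finitely many $\preceq$-maximal elements dominating all others. Within a single hammock region, an infinite $\preceq$-increasing chain in $\tX$ accumulates at one of the endpoints of $\arc{c}$ (resp.\ of $\SSS^{-1} \arc{c}$) and corresponds to a left fountain of $\tX$ at that endpoint. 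Such a chain is bounded above in $\tX$ precisely when that vertex is also a right fountain of $\tX$. Quantifying over all $c$ then yields that contravariant finiteness of $\sX$ is equivalent to every left fountain of $\tX$ being a right fountain.

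The main obstacle is this second step: one must carefully verify the combinatorial description of factorization in the hammocks using the mesh relations on $\bZ \Ainf$, treating the two hammock regions (which are interchanged by Serre duality) on an equal footing, and in particular checking non-vanishing of the relevant compositions. Once this dictionary is established, both directions of (1) reduce to bookkeeping on arcs: one explicitly builds an approximation by selecting the $\preceq$-maximal arcs in each region together with, when a fountain is present, one "long" arc witnessing the right-fountain property at the relevant vertex. Assertion (2) then follows by the dual argument using $\SSS$.
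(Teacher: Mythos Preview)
The paper does not give its own proof of this statement: it is quoted verbatim as \cite[Corollary 6.2]{CSP} and used as a black box. So there is no ``paper's proof'' to compare against; the correct benchmark is the argument in \cite{CSP}, which is built on the explicit computation of Hom-hammocks and approximations carried out there (and in \cite{Ng,HJ1} for the positive case).

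Your outline is headed in the right direction and matches the strategy actually used in \cite{CSP}: one reduces to a combinatorial factorisation criterion inside each hammock and then reads off the fountain condition. Two points, however, are genuine gaps rather than routine bookkeeping. First, the ``dominance order'' $\preceq$ and the claim that $g$ factors through $f$ iff $\arc{a}\preceq\arc{a}'$ is exactly the content that needs proof; in $\Tw$ with $w\leqslant -1$ the two hammock regions $\homto{c}$ and $\homfrom{\SSS^{-1}c}$ interact (they can overlap), and one must check that a single approximating object can cover maps coming from both regions simultaneously. This is handled in \cite{CSP} by an explicit description of minimal approximations, not by an abstract order argument. Second, your passage from ``unbounded $\preceq$-chains'' to ``left fountains'' is too quick: an unbounded chain in a hammock of $c$ forces infinitely many arcs sharing one endpoint of $\arc{c}$ (or of $\SSS^{-1}\arc{c}$), but you must also argue the converse quantifier --- that \emph{every} left-but-not-right fountain at some vertex $t$ actually obstructs approximation of a \emph{specific} $c$ (one takes $c$ with $t$ as an endpoint), and conversely that the right-fountain hypothesis lets you choose a single long arc dominating the tail of the chain. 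Until those two steps are written out, the argument is a plan rather than a proof.
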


The main result of this subsection is as follows.

\begin{theorem}\label{thm:functfinite}
Let $\sX$ be a $w$-orthogonal collection of objects in $\Tw$ and $\tX$ be the corresponding collection of admissible arcs. Suppose there are finitely many outer-isolated vertices in $\tX$. Then the following assertions are equivalent:
\begin{compactenum}
\item $\extn{\sX}$ is functorially finite;
\item There are no outer-arcs in $\tX$. 
\end{compactenum}
\end{theorem}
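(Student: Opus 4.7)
The plan is to combine Theorem~\ref{cor:contravariant-finiteness}, which characterises functorial finiteness of $\extn{\sX}$ in $\Tw$ via the coincidence of left- and right-fountains among the arcs of $\ind{\extn{\sX}}$, with Theorem~\ref{prop:computationofextensions}, which identifies those arcs as the closure of $\tX$ under admissible Ptolemy arcs of classes I and II. The first step will be to record the elementary observation, proved by induction on Ptolemy steps, that any Ptolemy arc of either class connects two of the four endpoints of its input arcs. Consequently every endpoint of an arc in $\ind{\extn{\sX}}$ is already an endpoint of some arc of $\tX$, and hence every fountain of $\ind{\extn{\sX}}$ sits at a $\tX$-endpoint.

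For the direction $(2)\Rightarrow(1)$, I would assume $\tX$ has no outer-arcs and take a left fountain $t$ of $\ind{\extn{\sX}}$; by the endpoint observation, $t$ is incident to some $\arc{c}\in\tX$, which sits inside an infinite nested chain of overarcs $\arc{c}\subset\arc{c}_1\subset\arc{c}_2\subset\cdots$ in $\tX$ with $s(\arc{c}_n)\to+\infty$ and $t(\arc{c}_n)\to-\infty$. For each $n$, I would build an admissible arc in $\extn{\sX}$ with endpoints $s(\arc{c}_n)$ and $t$ by running a sequence of class II Ptolemy operations on suitable shifts of the inner-arcs of $\arc{c}_n$, in direct analogy with the iterated Ptolemy chains from the proof of Lemma~\ref{lemma:overarcimpliessimpleminded}. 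These arcs exhibit $t$ as a right fountain, and the dual argument delivers covariant finiteness.

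For $(1)\Rightarrow(2)$, I would argue contrapositively. Given an outer-arc $\arc{a}=(t,u)\in\tX$, the non-crossing and orthogonality conditions force every other arc of $\tX$ to lie strictly to the left of $\arc{a}$ (endpoints $\leq u-1$), strictly to the right (endpoints $\geq t+1$), or inside $[u,t]$. Admissibility bounds the number of inside arcs, so the finite-outer-isolated-vertex hypothesis forces at least one of the two outer regions, say the right, to contain infinitely many arcs. Letting $v_0\geq t+1$ denote the minimal endpoint of $\tX$-arcs in the right region, iterated class II Ptolemy along the right-side chain accumulating toward $v_0$ produces admissible arcs of $\extn{\sX}$ with common target $v_0$ and starts tending to $+\infty$, witnessing a right fountain at $v_0$. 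To conclude that $v_0$ is not a left fountain, I would invoke (i) admissibility, constraining the target $r$ of any arc $(v_0,r)\in\ind{\extn{\sX}}$ to satisfy $v_0-r\in\{|w|,|w|+(|w|-1),\ldots\}$, and (ii) the endpoint observation, forcing $r$ to be a $\tX$-endpoint. A combinatorial count, exploiting the separation of $v_0$ from left-region endpoints by the outer-arc $\arc{a}$, will then show that only finitely many such $(v_0,r)$ exist in $\extn{\sX}$, so $v_0$ is a half-fountain and $\extn{\sX}$ fails to be contravariantly finite.

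The hardest step will be the Ptolemy-production argument itself in both directions: the explicit iterated-Ptolemy construction producing arcs $(s(\arc{c}_n),t)\in\ind{\extn{\sX}}$ from the overarc tower in $(2)\Rightarrow(1)$, and the finiteness estimate on admissible arcs $(v_0,r)$ crossing the outer-arc $\arc{a}$ in $(1)\Rightarrow(2)$. Both should follow by case analyses of Ptolemy behaviour parallel to those in Lemmas~\ref{lem:arcssms} and~\ref{lemma:overarcimpliessimpleminded}, supplemented by a careful accounting of how class I Ptolemy moves may carry an arc across $\arc{a}$.
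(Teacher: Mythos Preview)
Your plan for $(2)\Rightarrow(1)$ has a genuine gap. You propose to build arcs $(s(\arc{c}_n),t)\in\extn{\tX}$ by ``running a sequence of class~II Ptolemy operations on suitable shifts of the inner-arcs of $\arc{c}_n$, in direct analogy with the proof of Lemma~\ref{lemma:overarcimpliessimpleminded}.'' But that lemma works with $\sX=\bigcup_{i=w+1}^{0}\Sigma^i\sS$, which \emph{contains} the shifts you need; here $\extn{\sX}$ is the extension closure of $\sX$ itself and shifts of arcs of $\tX$ are simply not available as inputs to Ptolemy moves. Without shifts, class~II Ptolemy requires neighbouring arcs (distance~$1$), and the inner-isolated vertices of the overarcs $\arc{c}_n$ will typically block the chain. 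In fact, in the paper's proof of this direction, Cases~2 and~3 exhibit $\tX$-endpoints $v$ sitting inside an infinite overarc tower which are \emph{neither} left nor right fountains of $\extn{\tX}$, precisely because an inner-isolated vertex of some $\arc{c}_n$ cuts the Ptolemy chain (via Lemma~\ref{lem:isoextn}). So your direct construction cannot succeed in general; you would first have to argue that the hypothesis ``$t$ is a left fountain'' forces the combinatorics of Case~1 (adjacent inner-arcs filling each overarc on alternating sides), and only then does the Ptolemy chain go through. The paper does exactly this case split, together with the strengthening in Lemma~\ref{cor:extensionclosurePtolemyII} that for $w$-orthogonal $\sX$ only class~II Ptolemy is needed, which makes the inductive blocking arguments in Cases~2 and~3 tractable.

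For $(1)\Rightarrow(2)$ your outline is in the right spirit but coarser than the paper's. Your choice of $v_0$ as the minimal right-region endpoint need not produce a right fountain if some of the finitely many outer-isolated vertices lie to the right of $v_0$; and the ``combinatorial count'' you defer to is doing real work. The paper sidesteps both issues: from one outer-arc it passes to an infinite consecutive chain of outer-arcs $\arc{a}_k$ (using finiteness of outer-isolated vertices), observes that each $t(\arc{a}_k)$ is a right fountain, and then invokes Lemma~\ref{lem:isoextn} at the single outer-isolated vertex $t(\arc{a}_j)-1$ to forbid any arc of $\extn{\tX}$ from reaching left past it, killing the left fountain. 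Your endpoint observation is correct and is essentially a reformulation of Lemma~\ref{lem:isoextn}, but you should use it in this sharper form rather than attempting an admissibility count across $\arc{a}$.
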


Before proving this theorem, we need a pair of lemmas.

Recall that the extension closure of any subcategory of $\Tw$ corresponds to taking the closure under Ptolemy arcs of classes I and II. The following lemma, which will be used to prove Theorem \ref{thm:functfinite}, is a strengthening of this result, in the case when the subcategory in question is $w$-orthogonal.

\begin{lemma}\label{cor:extensionclosurePtolemyII}
Let $\sX$ be a $w$-orthogonal collection of indecomposable objects in $\Tw$. Then the objects of $\ind{\extn{\sX}}$ correspond to the arcs of the closure of $\tX$ under Ptolemy arcs of class II. 
\end{lemma}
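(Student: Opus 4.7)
The plan is to compare two notions of closure of $\tX$: the closure $\mathcal{P}(\tX)$ under admissible Ptolemy arcs of both classes I and II, and the closure $\mathcal{P}_{II}(\tX)$ under class II Ptolemy arcs only. By Theorem~\ref{prop:computationofextensions}, $\mathcal{P}(\tX)$ agrees with $\ind{\extn{\sX}}$, and the inclusion $\mathcal{P}_{II}(\tX) \subseteq \mathcal{P}(\tX)$ is immediate, so only the reverse inclusion requires proof.

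The approach is to show that $\mathcal{P}_{II}(\tX)$ is already closed under the formation of admissible class I Ptolemy arcs, and hence must coincide with $\mathcal{P}(\tX)$. By Proposition~\ref{prop:extensions}, any admissible class I Ptolemy arcs produced along the way come from a pair of strictly crossing arcs, so this reduces to the following key claim: whenever $\arc{c}, \arc{d} \in \mathcal{P}_{II}(\tX)$ strictly cross and their class I Ptolemy arcs $\arc{p}_1, \arc{p}_2$ are admissible, then $\arc{p}_1, \arc{p}_2$ already lie in $\mathcal{P}_{II}(\tX)$. I would prove this claim by induction on the total number of class II operations needed to build $\arc{c}$ and $\arc{d}$ from arcs of $\tX$.

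In the base case $\arc{c}, \arc{d} \in \tX$, the $w$-orthogonality hypothesis is used directly. Writing $\arc{c} = (t,u)$ and $\arc{d} = (v,z)$ with $u < z < t < v$, the vanishing of $\Hom_{\Tw}(\arc{c}, \arc{d})$ and $\Hom_{\Tw}(\arc{d}, \arc{c})$, together with (when $w \leq -2$) the vanishing of $\Ext^k(\arc{c}, \arc{d})$ and $\Ext^k(\arc{d}, \arc{c})$ for $w+1 \leq k \leq -1$, translates via Proposition~\ref{prop:hom-hammocks} into constraints on the four endpoints and the intermediate vertices. These constraints force the presence in $\mathcal{P}_{II}(\tX)$ of enough admissible intermediate arcs so that $\arc{p}_1$ and $\arc{p}_2$ can be produced by iterated class II Ptolemy operations, in the spirit of the sequence-of-inner-arcs constructions used in the proof of Lemma~\ref{lemma:overarcimpliessimpleminded}. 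For the inductive step, when $\arc{c}$ is itself obtained as the class II Ptolemy of two arcs $\arc{c}_1, \arc{c}_2 \in \mathcal{P}_{II}(\tX)$ built with strictly fewer operations, one analyses how $\arc{d}$ intersects $\arc{c}_1$ and $\arc{c}_2$ and invokes the inductive hypothesis on these smaller pairs to produce $\arc{p}_1$ and $\arc{p}_2$.

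The main obstacle will be the combinatorial case analysis in the base step: strict crossings of $w$-orthogonal arcs occur in several configurations, distinguished by the residues modulo $d$ of the four endpoints and by how many intermediate vertices sit between them. In each such configuration, one must exhibit an explicit sequence of class II Ptolemy operations delivering $\arc{p}_1$ and $\arc{p}_2$, with the hammock description of Proposition~\ref{prop:hom-hammocks} providing the key geometric input to locate the intermediate admissible arcs needed to bridge the crossing.
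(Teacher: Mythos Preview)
Your approach is genuinely different from the paper's. The paper argues categorically: for $x \in \ind{\extn{\sX}}$ it uses Lemma~\ref{lem:closedsummands}(2) to place $x$ in $\sX * (\sX)_m$, obtaining a triangle $a \to x \to b$ with $a \in \sX$ indecomposable and $b \in (\sX)_m$, and then invokes the explicit computation of middle terms from \cite{CSP} (Proposition~\ref{prop:extensions} together with \cite[Proposition~4.12 and Corollary~4.4]{CSP}) to show that whenever the outer terms strictly cross, the middle term has at least two summands. Indecomposability of $x$ thus forces the neighbouring case, i.e.\ a class~II Ptolemy, in one stroke; there is no combinatorial induction on arcs at all.

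Your combinatorial route has a real gap in the base case. You assert that when $\arc{c},\arc{d}\in\tX$ strictly cross with admissible class~I Ptolemy arcs, the orthogonality constraints ``force the presence in $\mathcal{P}_{II}(\tX)$ of enough admissible intermediate arcs'' to build $\arc{p}_1,\arc{p}_2$ via class~II operations. But $\sX$ is an \emph{arbitrary} $w$-orthogonal set: nothing prevents $\tX=\{\arc{c},\arc{d}\}$, in which case there are no other arcs available, and since crossing arcs are never neighbouring one has $\mathcal{P}_{II}(\{\arc{c},\arc{d}\})=\{\arc{c},\arc{d}\}$. The analogy with the sequence-of-inner-arcs construction in Lemma~\ref{lemma:overarcimpliessimpleminded} is misleading: that argument exploits the density of a $w$-Riedtmann configuration with no outer-arcs, structure a general $w$-orthogonal set simply does not possess.

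What is actually true --- and what the lemma applied to the two-element set $\{\arc{c},\arc{d}\}$ would imply --- is that your base case is \emph{vacuous}: two $w$-orthogonal indecomposables can never strictly cross with admissible class~I Ptolemy arcs. To run your strategy you would first have to prove this directly from the hammock description in Proposition~\ref{prop:hom-hammocks}, which you do not attempt. All the genuine content would then sit in the inductive step, where the crossing arcs lie in $\mathcal{P}_{II}(\tX)\setminus\tX$ and are no longer $w$-orthogonal to one another; your one-sentence sketch of that step (``analyse how $\arc{d}$ intersects $\arc{c}_1$ and $\arc{c}_2$'') hides a substantial case analysis that is not obviously easier than the paper's categorical argument.
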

\begin{proof}
Suppose $x \in \ind{\extn{\sX}}$. By Lemma \ref{lem:closedsummands} (2), we have $x \in \sX * (\sX)_m$, for some $m \geq 1$. In particular, there is a distinguished triangle $\tri{a}{x}{b}$ with $a \in \sX$ and $b \in (\sX)_m$. Since $\sX$ is just a collection of indecomposable objects regarded as a full subcategory, $a$ is indecomposable. 

\Case{$b$ is indecomposable} Since $\Ext^1 (b,a) \neq 0$, we have that $\arc{a}$ and $\arc{b}$ are either neighbouring arcs or they strictly cross (see Proposition \ref{prop:extensions}). If $\arc{a}, \arc{b}$ strictly cross, then by Lemma \ref{lem:admissibleclassI} and Proposition \ref{prop:extensions}, $x$ is not indecomposable. This contradicts the hypothesis, and therefore $\arc{a}$ and $\arc{b}$ are neighbouring arcs, making $\arc{x}$ a Ptolemy arc of class II, as required.

Now assume $b$ is not indecomposable and write $b = \sum\limits_{i=1}^k b_i$, where each $b_i$ is indecomposable. We can assume, without loss of generality, that $b_i \neq \Sigma a$, for all $i$. 

\Case{$k \geq 3$} If $\{b_i \to \Sigma a \mid i= 1, \ldots, k\}$ is not factorisation-free (see definition in \cite[Definition 4.2]{CSP}), then it follows from \cite[Corollary 4.4]{CSP} that $x$ is not indecomposable, which is a contradiction. Hence, $\{b_i \to \Sigma a \mid i= 1, \ldots, k\}$ is factorisation-free, and we can apply \cite[Proposition 4.12]{CSP}. However, each of the $x_i$ in the statement of this proposition is non-zero, implying that $x$ is not indecomposable, a contradiction. 

\Case{$k = 2$} Using the same arguments as above, we can see that the only case when $x$ is indecomposable is when $b_1 \in \rayto{\Sigma a}$ and $b_2 \in \exray{a}{\Sigma a}$ (see definitions in \cite[Subsection 2.2]{CSP}). Here, we have $x = x_1$, where $x_1$ is as in \cite[Proposition 4.12]{CSP}. But we have two distinguished triangles $\tri{a}{e_1^{\prime \prime}}{b_1}$ and $\tri{e^{\prime \prime}_1}{x_1}{b_2}$. Therefore, $x$ is the middle term of an extension whose outer-terms are indecomposable. We can then apply the same argument as in the case when $b$ is indecomposable. 
\end{proof}

\begin{lemma}\label{lem:isoextn}
Let $\sX$ be a subcategory of $\Tw$, $\tX$ the corresponding collection of admissible arcs. Suppose there is an outer-isolated vertex $v$ in $\tX$, and write $\tX_1$ (resp. $\tX_2$) to be the subset of $\tX$ whose arcs lie on the right (resp. left) of $v$.
\begin{compactenum}
\item $v$ is an isolated vertex in $\extn{\sX}$. 
\item $\extn{\sX} = \extn{\sX_1} \cup \extn{\sX_2}$, i.e. there are no arcs of the form $(t,u)$ with $u < v < t$. 
\end{compactenum}
\end{lemma}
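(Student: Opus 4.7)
The plan is to reduce both assertions to the Ptolemy-closure description of the extension closure given by Theorem \ref{prop:computationofextensions}. More precisely, I will show by induction on the stages of iterated Ptolemy closure that no arc ever appears that is incident with $v$ or that jumps over $v$ (i.e.\ has $v$ strictly between its endpoints). Since (1) and (2) are exactly these two non-existence statements (plus the decomposition in (2)), this will suffice.

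For the base case, both properties hold in $\tX$ itself by the definition of \emph{outer-isolated}: $v$ is isolated, so no arc of $\tX$ has $v$ as an endpoint, and $v$ is not an inner-vertex, so no arc of $\tX$ has $v$ as an inner-vertex. For the inductive step, assume the property holds for the current collection of arcs and take a Ptolemy arc built from two arcs $\arc{a},\arc{b}$ in this collection. In the Ptolemy class I case, $\arc{a}$ and $\arc{b}$ strictly cross, and the Ptolemy arcs are built from their four endpoints. By induction $v$ is not among these endpoints and does not lie strictly between the endpoints of either $\arc{a}$ or $\arc{b}$; a short case analysis of the nesting pattern of a strict crossing then forces all four endpoints to lie on a common side of $v$, so each Ptolemy arc of class I lies entirely on that side and in particular is not incident with $v$ and does not jump over it. In the Ptolemy class II case, $\arc{a}$ and $\arc{b}$ are neighbouring with $d(\arc{a},\arc{b})=1$; the hypothesis that neither arc is incident with $v$ nor jumps over $v$ implies that if $\arc{a}$ and $\arc{b}$ were on opposite sides of $v$, then $v$ would sit in the gap between their nearest endpoints, forcing $d(\arc{a},\arc{b})\geq 2$. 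Hence $\arc{a}$ and $\arc{b}$ lie on the same side of $v$, and so does the associated Ptolemy II arc.

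This completes the induction and proves (1), and also shows that every arc in the Ptolemy closure of $\tX$ lies entirely on one side of $v$, yielding the arc-level statement in (2). For the categorical equality in (2), the same induction shows that every Ptolemy step uses two arcs on the same side of $v$ and produces an arc on that side, so the Ptolemy closure of $\tX$ is exactly the disjoint union of the Ptolemy closures of $\tX_1$ and $\tX_2$. Applying Theorem \ref{prop:computationofextensions} to $\sX$, $\sX_1$ and $\sX_2$ gives $\extn{\sX}=\extn{\sX_1}\cup\extn{\sX_2}$.

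The only delicate point I expect is the Ptolemy II case: one must rule out that a length-$1$ gap between two admissible arcs straddles the isolated vertex $v$, and this uses in an essential way that $v$ itself is not an endpoint of either arc (so the gap between endpoints on opposite sides of $v$ is necessarily $\geq 2$). Everything else is bookkeeping on the two combinatorial types of Ptolemy moves.
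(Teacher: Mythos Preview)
Your proof is correct and follows essentially the same route as the paper. The paper is terse: it derives (1) immediately from Theorem~\ref{prop:computationofextensions} (since Ptolemy arcs only use endpoints of existing arcs, $v$ can never appear as one), and for (2) it refers to an induction on the filtration $(\sX)_n$ ``in the same spirit as in the proof of Lemma~\ref{lem:arcssms}'', which amounts to the same case-by-case analysis of Ptolemy moves that you carry out. The only cosmetic difference is that you run a single simultaneous induction for (1) and (2) directly on the iterated Ptolemy closure, whereas the paper treats (1) as immediate and phrases the induction for (2) through the categorical filtration $(\sX)_n$; via Theorem~\ref{prop:computationofextensions} these are the same argument.
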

\begin{proof}
The first statement follows immediately from Theorem \ref{prop:computationofextensions}. Recall that $\extn{\sX} = \bigcup\limits_{n \geq 0} (\sX)_n^\oplus$. The second statement follows easily by induction on $n$ in the same spirit as in the proof of Lemma \ref{lem:arcssms}. 
\end{proof}

\begin{proof}[Proof of Theorem \ref{thm:functfinite}]
$(\Rightarrow)$ Suppose there is an outer-arc in $\tX$. This implies that there are infinitely many outer-arcs $\{\arc{a}_i\}_{i \in \bZ}$ in $\tX$, and since there are only finitely many outer-isolated vertices, there is a $j \in \mathbb{Z}$ for which $t(\arc{a}_{k+1}) = s(\arc{a}_k) +1$, for $k \geq j$, and $t(\arc{a}_j)-1$ is an outer-isolated vertex. Note that, for each $k \geq j$, $t(\arc{a}_k)$ is a right fountain in $\extn{\sX}$. Moreover, since $t(\mathtt{a}_j) -1$ is an outer-isolated vertex, it follows from Lemma \ref{lem:isoextn} that there are no arcs in $\extn{\tX}$ of the form $(t(\arc{a}_k),x)$, with $x < t(\arc{a}_j)-1$. In particular, $t(\arc{a}_k)$ is not a left fountain. Therefore, $\extn{\sX}$ is not covariantly finite, by Theorem \ref{cor:contravariant-finiteness}, and so it is not functorially finite. One can similarly show that $\extn{\sX}$ is not contravariantly finite either. 

$(\Leftarrow)$ Suppose there are no outer-arcs in $\tX$. In particular, this means that there are no outer-isolated vertices in $\tX$. By Theorem \ref{cor:contravariant-finiteness}, we need to show that, given $v \in \mathbb{Z}$, $v$ is a right fountain in $\extn{\tX}$ if and only if $v$ is a left fountain in $\extn{\tX}$. 

If $v$ is an isolated vertex in $\tX$, then it is also an isolated vertex in $\extn{\tX}$ by Lemma \ref{lem:isoextn}, and there is nothing to prove. So suppose $v$ is not isolated in $\tX$. Assume, without loss of generality, that $v$ is the source of an arc $\arc{a}_0$ in $\tX$. By hypothesis, there is an infinite family of arcs $\{\mathtt{a}_i\}_{i \in \mathbb{N}_0}$ in $\tX$ such that $\mathtt{a}_{i+1}$ is the smallest overarc of $\mathtt{a}_i$. We have three cases:

\Case{1} $\tX$ satisfies the following condition:

For each even $i$ (resp. odd $i$), there is a sequence $\{\arc{a}_j^i\}_{j=1}^{k_i}$ of arcs in $\tX$ for which $\arc{a}_i$ is the smallest overarc and such that $t(\mathtt{a}_{j+1}^i) = s(\mathtt{a}^i_j) +1$, $s(\mathtt{a}^i_{k_i}) = s(\mathtt{a}_i) -1$ (resp. $t(\mathtt{a}_1^i) = t(\mathtt{a}_i) +1$), and $\mathtt{a}_{1}^i = \mathtt{a}_{i-1}$ (resp. $\mathtt{a}_{k_i}^i = \mathtt{a}_{i-1}$). 

In this case, it is easy to check that $v$ is both a left and right fountain in $\extn{\sX}$.

\Case{2}  There is $i \in \mathbb{N}$ for which $\mathtt{a}_i$ has at least one inner-isolated vertex smaller than $t(\mathtt{a}_{i-1})$ and at least one inner-isolated vertex bigger than $s(\mathtt{a}_{i-1})$.   
 
Let $z$ be the largest inner-isolated vertex of $\arc{a}_i$ which is smaller than $t(\arc{a}_{i-1})$ and $z^\prime$ be the smallest inner-isolated vertex of $\arc{a}_i$ which is bigger than $s(\arc{a}_{i-1})$. Then it follows from Lemma \ref{lem:isoextn} that the endpoints of any arc in $\extn{\tX}$ incident with $v$ lie in $]z,z^\prime[$. Hence, $v$ is neither a left or right fountain.

\Case{3}  There is an odd $i$ (resp. even $i$) and a sequence $\{\mathtt{a}_j^i\}_{j=1}^{k_i}$ of arcs in $\tX$ for which $\mathtt{a}_i$ is the smallest overarc and such that $t(\mathtt{a}_{j+1}^i) = s(\mathtt{a}^i_j) +1$, $s(\mathtt{a}_{k_i}^i) = s(\mathtt{a}_i) -1$ (resp. $t(\mathtt{a}^i_{1}) = t(\mathtt{a}_i) +1$), and  $\mathtt{a}_1^i = \mathtt{a}_{i-1}$ (resp. $\mathtt{a}_{k_i}^i = \mathtt{a}_{i-1}$).

Let $z$ be the largest inner-isolated vertex of $\arc{a}_i$, which by assumption is smaller than $t(\arc{a}_{i-1})$. We claim that for each $n \in \bN$, we have the following:

$(i)_n$: There are no arcs of the form $(s(\arc{a}_j^i),x)$ in $(\tX)_n$, where $x < z$ and $j= 1, \ldots, k_i$;

$(ii)_n$: There are no arcs in $(\tX)_n$ whose target is $s(\arc{a}_j^i)$, with $j= 1, \ldots, k_i$.

This claim implies that $v$ is neither a left or right fountain. We prove the claim by induction on $n$. Clearly $(i)_1$ and $(ii)_1$ hold, since $(\sX)_1 = \sX$. Suppose $(i)_n$ and $(ii)_n$ hold. Let $\arc{e}$ be an arc in $(\sX)_{n+1} = (\sX)_n * \sX$ incident with $s(\arc{a}_j^i)$, for some $j$, which does not satisfy $(i)_{n+1}$ nor $(ii)_{n+1}$. 

By Lemma \ref{cor:extensionclosurePtolemyII} and Proposition \ref{prop:extensions}, we have $\tri{x}{e}{y}$, with $x \in (\sX)_n$, $y \in \sX$, and $\arc{x}$ and $\arc{y}$ neighbouring arcs satisfying one of the following three cases. 

\Case{$s(\arc{y}) = t(\arc{x})-1$} Suppose $s(\arc{e}) = s(\arc{a}_j^i)$. Since $z$ is isolated, we either have $t(\arc{x}) < z < s(\arc{x})$ or $t(\arc{y}) < z < s (\arc{y})$. However this is not possible, since the first contradicts $(i)_n$ and the latter contradicts $(i)_1$. 

Suppose now that $t(\arc{e}) = s(\arc{a}_j^i)$. Then $t(\arc{y}) = s(\arc{a}_j^i)$, contradicting $(ii)_1$.  

\Case{$s(\arc{x}) = s(\arc{y})+1$} If $s(\arc{a}_j^i) = t(\arc{e}) = t(\arc{x})$, then $\arc{x}$ contradicts $(ii)_n$. On the other hand, if $s(\arc{a}_j^i) = s(\arc{e})$, then $\arc{y}$ would contradict $(ii)_1$. 

\Case{$t(\arc{y}) = t(\arc{x})-1$} If $s(\arc{a}_j^i) = s(\arc{e}) = s(\arc{y})$, and $ t(\arc{y}) < t(\arc{e}) < z < s(\arc{y})$, then $\arc{y}$ would contradict $(i)_1$. Then we have $s(\arc{a}_j^i) = t(\arc{e}) = s(\arc{x})$. By $(i)_n$ and $z$ is isolated, we must have $z < t(\arc{y})$. Hence, since there are no crossings in $\tX$, $\arc{y}$, which is an overarc of $s(\arc{a}_j^i)$, must be an innerarc of $\arc{a}_i$. This contradicts the fact that $\arc{a}_i$ is the smallest overarc of $s(\arc{a}_j^i$, finishing the proof of the claim and of the theorem. 
\end{proof}

\begin{corollary}\label{cor:functoriallyfinite}
Given a $w$-Hom configuration $\sS$ in $\Tw$, $\extn{\sS}$ is functorially finite if and only if $\sS$ is a $w$-simple-minded system.  
\end{corollary}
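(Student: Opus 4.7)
The plan is to observe that this corollary is an immediate combination of the two main results developed in the section, namely the combinatorial classification of $w$-simple-minded systems (Theorem \ref{thm:classificationsms}) and the functorial-finiteness criterion (Theorem \ref{thm:functfinite}). The only auxiliary input needed is that a $w$-Hom configuration always has only finitely many outer-isolated vertices, so that Theorem \ref{thm:functfinite} is applicable; this is built into the definition of a $w$-Hom configuration via Theorem \ref{thm:Hom-config}(3), which bounds their number by $|w|$.

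For the forward direction, I would assume $\sS$ is a $w$-simple-minded system. By Lemma \ref{lem:smsimpliesRiedtmann}, $\sS$ is in particular a $w$-Riedtmann configuration, so by Theorem \ref{thm:Riedtmann} the set $\tS$ has at most $|w|-1$ outer-isolated vertices, in particular finitely many. By Theorem \ref{thm:classificationsms}(3), there are no outer-arcs in $\tS$. Applying the implication (2)$\Rightarrow$(1) of Theorem \ref{thm:functfinite} (with $\sX = \sS$, noting $\sS$ is $w$-orthogonal by definition) then yields that $\extn{\sS}$ is functorially finite.

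For the converse, I would suppose $\extn{\sS}$ is functorially finite. Since $\sS$ is a $w$-Hom configuration, $\tS$ has at most $|w|$ outer-isolated vertices by Theorem \ref{thm:Hom-config}(3), and $\sS$ is automatically $w$-orthogonal. Theorem \ref{thm:functfinite} then forces $\tS$ to have no outer-arcs. Combined with the no-crossings and `$|w|-1$ inner-isolated vertices per arc' conditions that come from $\sS$ being a $w$-Hom configuration (Theorem \ref{thm:Hom-config}(1)--(2)), the three conditions of Theorem \ref{thm:classificationsms} are met, so $\sS$ is a $w$-simple-minded system.

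There is essentially no obstacle here since both heavy lifts (the classification and the functorial-finiteness criterion) have already been established; the only thing to double-check when writing the proof is simply that the hypothesis ``$w$-Hom configuration'' feeds correctly into Theorem \ref{thm:functfinite}, i.e.\ that it supplies both the $w$-orthogonality and the finiteness of outer-isolated vertices required by that theorem.
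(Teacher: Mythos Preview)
Your proposal is correct and matches the paper's approach: the corollary is stated without proof in the paper, precisely because it follows immediately by combining Theorem~\ref{thm:classificationsms} with Theorem~\ref{thm:functfinite}, using Theorem~\ref{thm:Hom-config} to verify the hypotheses of the latter. Your write-up spells out exactly this logic, and your observation that one must check the finiteness of outer-isolated vertices and the $w$-orthogonality of $\sS$ before invoking Theorem~\ref{thm:functfinite} is the only point worth noting.
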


\section{Mutations via approximations}

In \cite{Dugas12} a mutation of simple-minded systems was introduced. In the last section of this paper, we give a representation theoretic description of mutations of $w$-simple-minded systems, via an iterative procedure of mutations of simple-minded systems. When $w \leq -2$, this mutation procedure is very reminiscent to that of $(|w|-1)$-cluster-tilting objects.

A map $f\colon a \to t$ is {\it right minimal} if for every $g\colon a \to a$ such that $fg=f$, we have that $g$ is an isomorphism. The map $f\colon a \to t$ is a {\it minimal right $\sA$-approximation} if it is both right minimal and a right $\sA$-approximation. There are dual notions of {\it left minimal} and {\it minimal left $\sA$-approximation}.

\begin{lemma}\label{lem:fact1}
Let $\sS$ be a $w$-Hom configuration in $\Tw$ and $s \in \sS$. Suppose $\Sigma^{-1} s$ admits a minimal left $\extn{\sS \setminus \{s\}}$-approximation $\Sigma^{-1} s \rightarrow x_s$. Then each summand $x^\prime$ of $x_s$ must satisfy one of the following conditions:
\begin{compactenum}[(i)]
\item $s(\mathtt{x}^\prime) = s (\Sigma^{-1} \mathtt{s})$ and $t(\mathtt{x}^\prime) < t(\mathtt{s})$,
\item $t(\mathtt{x}^\prime) = s(\Sigma^{-1} \mathtt{s})$, or
\item $t(\mathtt{x}^\prime) = t(\Sigma^{-1} \mathtt{s})$ and $t(\Sigma^{-1} \mathtt{s}) < s(\mathtt{x}^\prime) < s(\mathtt{s})$.
\end{compactenum}
\end{lemma}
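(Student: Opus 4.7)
The plan is as follows. First I would observe that the left minimality of the approximation $f\colon \Sigma^{-1}s \to x_s$ forces, for every indecomposable summand $x^\prime$ of $x_s$, the component map $\Sigma^{-1}s \to x^\prime$ to be non-zero: were it to vanish, the idempotent endomorphism of $x_s$ equal to the identity on the complementary summand and to zero on $x^\prime$ would satisfy $g f = f$ without being an isomorphism, contradicting minimality. Hence $\Hom_{\Tw}(\Sigma^{-1}s, x^\prime) \neq 0$, and Proposition~\ref{prop:hom-hammocks} places $\mathtt{x}^\prime$ in $\homfrom{\Sigma^{-1}\mathtt{s}} \cup \homto{\SSS \Sigma^{-1}\mathtt{s}}$.

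Next I would exploit the hypothesis $x^\prime \in \extn{\sS \setminus \{s\}}$. Since $\sS\setminus\{s\}$ is itself $w$-orthogonal, Lemma~\ref{cor:extensionclosurePtolemyII} guarantees that $\mathtt{x}^\prime$ is obtained from $\tS\setminus\{\mathtt{s}\}$ by iterated Ptolemy moves of class~II. Theorem~\ref{thm:Hom-config} ensures that no arc of $\tS\setminus\{\mathtt{s}\}$ strictly crosses $\mathtt{s}$; a direct inspection of the class-II construction then shows that non-crossing with $\mathtt{s}$ is preserved by each Ptolemy step, so $\mathtt{x}^\prime$ does not strictly cross $\mathtt{s}$ and $\mathtt{x}^\prime \neq \mathtt{s}$.

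The final step is a case analysis in the combinatorial model. Using the explicit description of the forward and backward hammocks from \cite[Subsection~2.2]{CSP}, I would list the possible shapes of an arc in $\homfrom{\Sigma^{-1}\mathtt{s}} \cup \homto{\SSS \Sigma^{-1}\mathtt{s}}$ according to which endpoint (if any) it shares with $\Sigma^{-1}\mathtt{s}$. Intersecting each shape with admissibility of $\mathtt{x}^\prime$ and with the non-crossing conclusion of the previous step discards every candidate except those of types (i), (ii) and (iii); in particular the strict inequalities $t(\mathtt{x}^\prime) < t(\mathtt{s})$ in (i) and $s(\mathtt{x}^\prime) < s(\mathtt{s})$ in (iii) are forced by the admissibility of $\mathtt{x}^\prime$ and of $\mathtt{s}$ jointly modulo $d = w-1$.

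The main obstacle will be the bookkeeping in this last step: first checking that the class-II Ptolemy closure really does preserve non-crossing with $\mathtt{s}$, and then systematically eliminating the various \emph{wrong-side} candidates, i.e.\ arcs overarcing $\mathtt{s}$ and arcs lying on the opposite side of $\mathtt{s}$ from $\Sigma^{-1}\mathtt{s}$, using this non-crossing property together with the congruence condition that governs admissibility of arcs.
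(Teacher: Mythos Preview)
Your approach is correct and is essentially an unpacking of the paper's one-line proof. The paper simply cites Lemma~\ref{lem:isoextn} together with \cite[Lemma~2.3]{CS3}; the latter is the arc description of the Hom-hammocks (your step~1 via Proposition~\ref{prop:hom-hammocks}), and the former supplies the geometric constraint on arcs in $\extn{\sS\setminus\{s\}}$ (your step~2).

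The only genuine difference is in how that geometric constraint is obtained. The paper observes that the endpoints $s(\mathtt{s})$ and $t(\mathtt{s})$ are isolated in $\tS\setminus\{\mathtt{s}\}$ and invokes Lemma~\ref{lem:isoextn} to conclude they remain isolated in the extension closure; combined with the hammock shape this already suffices for the case analysis. You instead pass through Lemma~\ref{cor:extensionclosurePtolemyII} and argue directly that class-II Ptolemy moves preserve non-crossing with $\mathtt{s}$. Your route yields a marginally stronger conclusion (no arc of the closure crosses $\mathtt{s}$, not merely that none is incident with its endpoints) at the cost of the extra verification you flag as the ``main obstacle''; the paper's route is shorter because the isolated-vertex observation is immediate from Theorem~\ref{prop:computationofextensions} and is all that the hammock case analysis actually needs. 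Either way the final elimination in step~3 is the same bookkeeping.
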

\begin{proof}
It is a consequence of Lemma \ref{lem:isoextn} and \cite[Lemma 2.3]{CS3}.
\end{proof}

\begin{proposition}\label{prop:approximationmutation}
Let $\sS$ be a $w$-Hom configuration in $\Tw$ and $s \in \sS$ be such that $\mathtt{s}$ has an overarc in $\tS$. The following conditions hold:
\begin{compactenum}
\item $\Sigma^{-1} s$ has a left $\langle \sS \setminus \{s\} \rangle$-approximation;
\item The cocone $s^*$ of the minimal left $\langle \sS \setminus \{s\} \rangle$-approximation $f$ is indecomposable and it has an overarc in $\tS$;
\item $s^* = s$ if and only if $w = -1$;
\item $\sS \setminus \{s\} \cup \{s^* \}$ is a $w$-Hom-configuration in $\Tw$. 
\end{compactenum}
\end{proposition}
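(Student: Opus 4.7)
The plan is to treat the four parts in order, exploiting the combinatorial model and the fact that the overarc of $\mathtt{s}$ in $\tS$ confines the ``region of activity'' for maps from $\Sigma^{-1} s$ into $\extn{\sS \setminus \{s\}}$.

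For part (1), I would establish existence of a left approximation by showing that the functor $\Hom_{\Tw}(\Sigma^{-1}s, -)$ has finite support on $\ind \extn{\sS \setminus \{s\}}$. By Proposition \ref{prop:hom-hammocks}, the indecomposables $y$ with $\Hom(\Sigma^{-1}s, y) \neq 0$ lie in the forward hammock of $\Sigma^{-1} \mathtt{s}$ or the backward hammock of $\SSS \Sigma^{-1} \mathtt{s}$. Let $\mathtt{a}$ be the smallest overarc of $\mathtt{s}$ in $\tS$: its $|w|-1$ inner-isolated vertices, together with Lemma \ref{lem:isoextn}, confine the arcs of $\extn{\sS \setminus \{s\}}$ that can receive a nonzero map from $\Sigma^{-1}s$ to a finite set. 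Taking $x_s$ to be the direct sum of these indecomposables with their canonical maps from $\Sigma^{-1}s$ yields the required left approximation.

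For part (2), I would combine Lemma \ref{lem:fact1} with the Ptolemy arc calculus of Theorem \ref{prop:computationofextensions} and Proposition \ref{prop:extensions}. Lemma \ref{lem:fact1} restricts the summands $x'$ of the minimal approximation $f$ to three types, each sharing an endpoint with $\Sigma^{-1}\mathtt{s}$ and lying inside the region bounded by $\mathtt{a}$. Iterating the extension triangles for neighbouring arcs produces the cocone as a single admissible arc $\mathtt{s}^*$, whose endpoints are obtained from the summands of $x_s$ by a ``Ptolemy zig-zag''. Indecomposability follows because the one-dimensionality of all the relevant Hom spaces (Proposition \ref{prop:hom-hammocks}) and the fact that these summands are pairwise Ptolemy-compatible rule out any splitting of the cocone, as in the proofs of Lemma \ref{cor:extensionclosurePtolemyII}. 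Since $\mathtt{s}^*$ lives strictly inside $\mathtt{a}$, the same arc $\mathtt{a}$ serves as an overarc of $\mathtt{s}^*$. For part (3), in the case $w=-1$ the $w$-orthogonality conditions impose nothing in the empty range $w+1 \leq k \leq -1$, and CY duality yields $\Hom(s, \Sigma^{-1}s) \simeq D\Hom(s,s) = \kk$, so there is a nonzero map $s \to \Sigma^{-1}s$; I would verify that the triangle it generates realises the minimal approximation, giving $s^* = s$. Conversely, when $w \leq -2$, the orthogonality of $\sS$ gives $\Hom(s,\Sigma^{-1}s) = \Ext^{-1}(s,s) = 0$, so $s$ cannot appear as a cocone of any map from $\Sigma^{-1}s$; together with the explicit form of $\mathtt{s}^*$ from (2), this rules out $s^* = s$, matching the count $|w|-1 \geq 1$ of Corollary \ref{cor:replacementssms}.

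For part (4), I would verify the three defining conditions of Theorem \ref{thm:Hom-config} for $\tS' \coloneqq \tS \setminus \{\mathtt{s}\} \cup \{\mathtt{s}^*\}$: the absence of crossings and the fact that every arc carries exactly $|w|-1$ inner-isolated vertices follow from the combinatorial description of $\mathtt{s}^*$ obtained in (2), using that $\mathtt{a}$ was the smallest overarc of $\mathtt{s}$. The outer-isolated vertex count is preserved since the replacement takes place strictly inside $\mathtt{a}$ (this is also the observation following Proposition \ref{prop:mutationsT-m}). The main obstacle will be part (2): decoding the minimal approximation into a single admissible arc and proving indecomposability of the cocone, since Lemma \ref{lem:fact1} leaves several summand types open and one must track the intermediate triangles in the Ptolemy iteration carefully, handling the factorisation-free arguments from Lemma \ref{cor:extensionclosurePtolemyII} to ensure no splitting occurs.
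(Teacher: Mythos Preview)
Your overall strategy is sound, but it diverges from the paper's in a way that leaves part (2) underspecified and creates a small gap in part (3).

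The paper does not argue finiteness abstractly. Instead it \emph{explicitly constructs} the minimal left $\extn{\sS\setminus\{s\}}$-approximation as a map $\Sigma^{-1}s \to e_1 \oplus e_2$ with at most \emph{two} indecomposable summands, defined by a three-case analysis on the positions of the inner-isolated vertices of the smallest overarc $\mathtt{a}$ and of $\mathtt{s}$. Lemma~\ref{lem:fact1} is then used only to check that every map factors through this particular $e_1\oplus e_2$, and minimality comes from $\{e_1,e_2\}$ being orthogonal. With the approximation in hand, Proposition~\ref{prop:extensions} gives the cocone $s^*$ as a single explicit arc $(v,v')$ or $(v',v)$ in one step, so indecomposability, the overarc property, and the dichotomy $s^*=s \Leftrightarrow w=-1$ are all read off directly. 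Your finite-support argument for (1) would work, but the ``Ptolemy zig-zag'' you propose for (2) presupposes you know how many summands the minimal approximation has and how they sit relative to each other; without the two-summand identification, the iteration is not well-defined and the appeal to factorisation-free arguments from Lemma~\ref{cor:extensionclosurePtolemyII} does not obviously transfer, since that lemma concerns extensions inside a $w$-orthogonal set, not cocones of approximations.

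For part (3), your backward direction has a gap: from $\Hom(s,\Sigma^{-1}s)=0$ you conclude that $s$ cannot be the cocone, but the connecting map $s^*\to\Sigma^{-1}s$ in the approximation triangle could in principle be zero. You would need to argue separately that minimality of $f$ forces this map to be nonzero whenever $s^*\ne 0$ (equivalently, that $f$ is not a split monomorphism), which is exactly the content the paper obtains for free from the explicit form of $e_1,e_2$ and later records as Lemma~\ref{lem:sigma0non-zero}. The paper's explicit computation also makes part (4) immediate via Proposition~\ref{prop:mutationsT-m}, since one sees directly that $\mathtt{s}^*$ joins two isolated vertices under $\mathtt{a}$ and crosses nothing in $\tS\setminus\{\mathtt{s}\}$; your route would require reproving these combinatorial facts from the abstract description of $\mathtt{s}^*$.
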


\begin{proof}
In order to prove (1) and show the properties of the cocone, we shall explicitly compute the minimal left $\extn{\sS \setminus \{s\}}$-approximation of $\Sigma^{-1} s$. 

Let $\arc{a}$ be the smallest overarc of $\arc{s}$ in $\tS$. Denote by $\{\arc{b}_i\}_{i=1}^k$ (resp. $\{\arc{c}_i\}_{i=1}^{k^\prime}$) the set of innerarcs $\arc{a}$ (resp. $\arc{s}$) in $\tS$. Assume these arcs are ordered from left to right, and write $\arc{s} = \arc{b}_i$, for some $i \in \{1, \ldots, k\}$. 
 
We now define three arcs $\arc{e}_1$, $\arc{e}_2$ and $\arc{s}^\prime$, with a case by case analysis in terms of the inner-isolated vertices of $\arc{a}$ and $\arc{s}$. The element $e_1 \oplus e_2$ will define the minimal left $\extn{\sS \setminus \{s\}}$-approximation of $\Sigma^{-1} s$, and $\arc{s}^\prime$ will be its cone. 

\Case{1} $\arc{a}$ has no inner-isolated vertices in $\tS$.

We must have $w=-1$ and so $\arc{s}$ has no inner-isolated vertices either. Take
\[
\arc{e}_1:= (s(\Sigma^{-1} \arc{s}),t(\arc{s})-1) \quad
\arc{e}_2:= \begin{cases}
(s(\arc{s})-1, t(\arc{s})+1) &\mbox{ if } l(\arc{s}) > 1 \\
0 &\mbox{ if } l(\arc{s}) = 1,
\end{cases}
\] 
and 
\[
\arc{s}^\prime := \begin{cases}
(s(\arc{e}_2),t(\arc{e}_1)) &\mbox{ if } e_2 \neq 0 \\
(t(\arc{s}),t(\arc{e_1})) &\mbox{ if } e_2 = 0.
\end{cases}
\]

\Case{2} $\mathtt{a}$ has an inner-isolated vertex in $\tS$ larger than $s(\arc{s})$. 

We must have $w\neq -1$ and so $\arc{s}$ also has an inner-isolated vertex. Let $v$ (resp. $v^\prime$) be the smallest inner-isolated vertex in $]s(\arc{s}),s(\arc{a})[$ (resp. $]t(\arc{s}),s(\arc{s})[$). Take
\[
\arc{e}_1 := \begin{cases}
(s(\arc{b}_j), s(\Sigma^{-1} \arc{s})) &\mbox{ if } v= s(\arc{b}_j)+1, j \geq i+1 \\
0 &\mbox{ if } v= s(\arc{b}_i)+1,
\end{cases}
\]
\[
\arc{e}_2:= \begin{cases}
(s(\arc{c}_i), t(\Sigma^{-1} \arc{s})) &\mbox{ if } v^\prime = s(\arc{c}_i)+1 \\
0 &\mbox{ if } v^\prime = t(\arc{s})+1, 
\end{cases}
\]
and 
\[
\arc{s}^\prime := \begin{cases}
(s(\arc{e}_1),s(\arc{e_2})) &\mbox{ if } e_1 \neq 0, e_2 \neq 0 \\
(s(\arc{s}),s(\arc{e_2})) &\mbox{ if } e_1 = 0, e_2 \neq 0 \\
(s(\arc{e}_1),t(\arc{s})) &\mbox{ if } e_1 \neq 0, e_2 = 0 \\
\arc{s} &\mbox{ if } e_1 = e_2 = 0. 
\end{cases}
\]

\Case{3} $\arc{a}$ has an inner-isolated vertex in $\tS$ which is smaller than $t(\mathtt{s})$ and no inner-isolated vertex larger than $s(\arc{s})$. 

We must have $w \neq -1$ and so $\arc{s}$ has an inner-isolated vertex. Let $v$ (resp. $v^\prime$) the smallest inner-isolated vertex of $\arc{a}$ (resp. $\arc{s}$). Take $\arc{e}_2$ to be as in Case 2 and 
\[
\arc{e}_1:= \begin{cases}
(s(\Sigma^{-1} \arc{s}),s(\arc{b}_j)) &\mbox{ if } v= s(\arc{b}_j)+1 \\
(s(\Sigma^{-1} \arc{s}),t(\arc{a})) &\mbox{ if } v= t(\arc{a})+1,
\end{cases} \quad
\arc{s}^\prime := \begin{cases}
(s(\arc{e}_2),t(\arc{e}_1)) &\mbox{ if } e_2 \neq 0 \\
(t(\arc{s}), t(\arc{e}_1)) &\mbox{ if } e_2 = 0.
\end{cases}
\]

Note that $\Hom (\Sigma^{-1} s, e_1)$ and $\Hom (\Sigma^{-1} s, e_2)$ are one-dimensional spaces. We claim that the map $\xymatrix{\Sigma^{-1} s \ar@{->}[r]^{\rowmat{f_1}{f_2}^t} & e_1 \oplus e_2}$, with $f_1, f_2 \neq 0$ is the minimal left $\langle \sS \setminus \{s\} \rangle$-approximation of $\Sigma^{-1} s$. Indeed, it is clear that $e_1 \oplus e_2 \in \langle \sS \setminus \{s\} \rangle$. Given a non-zero map $g: \Sigma^{-1} s \rightarrow x_s$, with $x_s \in \langle \sS \setminus \{s\} \rangle$, it follows from Lemma \ref{lem:fact1} and the factorisation properties of $\Tw$ (cf. \cite[Proposition 2.3]{CSP}, see also \cite[Propositions 2.1 and 2.2]{HJ2}) that $g$ factors through $\rowmat{f_1}{f_2}^t$. The minimality follows from the fact that $\{e_1, e_2\}$ is an orthogonal set. 

Now, we can deduce from Proposition \ref{prop:extensions} that we have a triangle of the form $$\xymatrix{\Sigma^{-1} s \ar@{->}[r]^{\rowmat{f_1}{f_2}^t} & e_1 \oplus e_2 \ar@{->}[r] & s^\prime \ar@{->}[r] & s},$$ where $\arc{s}^\prime$ is defined above. Hence, the cocone $s^*\coloneqq \Sigma^{-1} s^\prime$ of $\rowmat{f_1}{f_2}^t$ is indecomposable, and the corresponding arc is given by: $\arc{s}^* = \arc{s}$ in Case 1 (i.e. when $w = -1$), $\arc{s}^* = (v,v^\prime) \neq \arc{s}$ in Case 2, and $\arc{s}^* = (v^\prime,v) \neq \arc{s}$ in Case 3. Given that $v, v^\prime$ are inner-isolated vertices of $\arc{a}$, $\arc{s}$ and $\arc{s}^\prime$ have the same smallest overarc. 

Since $\arc{s}^*$ is incident with isolated vertices in $\tS \setminus \{s\}$ and it does not cross any of its arcs, we have that $\sS \setminus \{s\} \cup \{s^*\}$ is $w$-orthogonal. It follows from Proposition \ref{prop:mutationsT-m} that $\sS \setminus \{s\} \cup \{s^*\}$ is in fact a $w$-Hom-configuration.  
\end{proof}

The object $s^*$ is said to be the \emph{left mutation of $\sS$ at $s$}. Note that there is a dual version of Proposition \ref{prop:approximationmutation}, where one considers the cone of the minimal right $\extn{\sS \setminus \{s\}}$-approximation of $\Sigma s$. This cone is said to be the \emph{right mutation of $\sS$ at $s$}. 

Recall that if $s$ is an object in a $w$-Hom configuration $\sS$ in $\Tw$ such that the corresponding arc is not an outer-arc, then $\sS$ can be mutated at $s$ in precisely $|w|$ ways. We will now show that, by iteratively performing the left mutation $|w|$ times, we get all the completions of $\sS \setminus \{s\}$. Note that this iterative procedure is possible, since $s^*$ has an overarc in $\sS \setminus \{s\}$. 
 
\begin{notation} 
Let $\sS$ be a $w$-simple-minded system in $\Tw$, and $s_0 \in \sS$. We denote by 
\begin{equation}\label{eq:iterativeleftmutation}
\trilabels{s_i}{\Sigma^{-1} s_{i-1}}{a_{i-1}}{\sigma_{i-1}}{f_{i-1}}{g_{i-1}},
\end{equation}
 the completion to a triangle of the minimal left $\extn{\sS \setminus \{s_0\}}$-approximation $f_{i-1}$ of $\Sigma^{-1} s_{i-1}$, for each $i = 1, \ldots, |w|$. 
 \end{notation}

\begin{lemma}\label{lem:sigma0non-zero}
The map $\sigma_0$ is non-zero.
\end{lemma}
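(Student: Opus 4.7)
The strategy is a proof by contradiction leveraging the combinatorial model for extension closures. Assuming $\sigma_0 = 0$ in the defining triangle, the standard fact that the vanishing of the first map in a distinguished triangle makes the second map a split monomorphism gives $a_0 \simeq \Sigma^{-1}s_0 \oplus \Sigma s_1$. Since $\sS \setminus \{s_0\}$ is orthogonal (being a subset of the $w$-orthogonal set $\sS$), Lemma~\ref{lem:closedsummands}(2) guarantees that $\extn{\sS \setminus \{s_0\}}$ is closed under direct summands, and hence $\Sigma^{-1} s_0 \in \extn{\sS \setminus \{s_0\}}$. The plan is then reduced to ruling this out.

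For the combinatorial contradiction, I will pass to arcs via Lemma~\ref{cor:extensionclosurePtolemyII}, which identifies $\ind{\extn{\sS \setminus \{s_0\}}}$ with the closure of $\tS \setminus \{\arc{s}_0\}$ under Ptolemy arcs of class II. The key claim is that every arc arising in this closure fails to cross $\arc{s}_0$. Since $\tS$ has no crossings by Theorem~\ref{thm:classificationsms}(1), the base case is immediate, and in particular no arc in $\tS \setminus \{\arc{s}_0\}$ shares an endpoint with $\arc{s}_0$. The inductive step rests on the observation that, by the inductive hypothesis, neither $s(\arc{s}_0)$ nor $t(\arc{s}_0)$ ever occurs as the endpoint of an arc already in the closure; consequently, an arc with all endpoints strictly inside the interval $(t(\arc{s}_0),s(\arc{s}_0))$ is separated by distance at least $2$ from any arc with all endpoints strictly outside $[t(\arc{s}_0),s(\arc{s}_0)]$, so such arcs can never be neighbouring. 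A short case analysis---depending on whether the two neighbouring arcs both lie strictly inside $\arc{s}_0$, both lie outside on a single side, or straddle $\arc{s}_0$ as overarcs---then shows that the resulting Ptolemy-II arc is again inside, outside on a single side, or an overarc of $\arc{s}_0$, and so in particular does not cross $\arc{s}_0$.

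The conclusion follows by checking that $\Sigma^{-1} \arc{s}_0 = (s(\arc{s}_0)+1,\, t(\arc{s}_0)+1)$ \emph{does} cross $\arc{s}_0$. For $w \leq -2$ one has $t(\arc{s}_0) < t(\arc{s}_0)+1 < s(\arc{s}_0) < s(\arc{s}_0)+1$, a strict crossing; for $w=-1$ the endpoint $t(\arc{s}_0)+1$ coincides with $s(\arc{s}_0)$, giving a shared-endpoint crossing. Either way, $\Sigma^{-1} \arc{s}_0$ cannot belong to the Ptolemy-II closure of $\tS \setminus \{\arc{s}_0\}$, contradicting the conclusion of the first paragraph. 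The main obstacle in the plan is the inductive step of the combinatorial claim; it is resolved uniformly by the ``forbidden vertex'' principle that $s(\arc{s}_0)$ and $t(\arc{s}_0)$ never appear as endpoints of arcs in the closure.
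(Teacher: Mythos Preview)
Your argument is correct and takes a genuinely different route from the paper's. The paper simply quotes the explicit computation of the approximation $a_0 = e_1 \oplus e_2$ carried out case-by-case in the proof of Proposition~\ref{prop:approximationmutation}, observes by inspection that neither $e_i$ equals $\Sigma^{-1}s_0$, and then uses Krull--Schmidt on $e_1 \oplus e_2 \simeq \Sigma^{-1}s_0 \oplus \Sigma s_1$. Your approach instead bypasses that explicit description entirely and shows the stronger fact that $\Sigma^{-1}s_0 \notin \extn{\sS\setminus\{s_0\}}$, via the combinatorial invariant ``no arc in the Ptolemy-II closure crosses $\arc{s}_0$''. The forbidden-vertex principle (endpoints of Ptolemy-II arcs are chosen from endpoints of the generating pair) together with the distance-$2$ separation between arcs strictly inside $\arc{s}_0$ and arcs strictly outside makes the induction go through cleanly. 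The paper's proof is shorter because the heavy lifting was already done in Proposition~\ref{prop:approximationmutation}; yours is more self-contained and would work for any $w$-orthogonal $\sS$ without crossings, not just one where the approximation has been computed.

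One small imprecision: in the final step you write that for $w=-1$ one has $t(\arc{s}_0)+1 = s(\arc{s}_0)$. This only holds when $\arc{s}_0$ has minimal length $1$; arcs of length $3,5,\ldots$ are also admissible in $\sT_{-1}$, and for those $\Sigma^{-1}\arc{s}_0$ strictly crosses $\arc{s}_0$ just as in the $w\leq -2$ case. Either way the crossing conclusion stands, so this does not affect the validity of the argument.
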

\begin{proof}
Let $e_1$ and $e_2$ be as in proof of Proposition \ref{prop:approximationmutation}. We can see, by checking all the cases in the proof, that $e_1, e_2 \neq \Sigma^{-1} s_0$. If $\sigma_0 = 0$, we would have $e_1 \oplus e_2 \simeq \Sigma^{-1} s_0 \oplus \Sigma s_1$. In particular, $\Sigma^{-1} s_0$ would coincide with one of the $e_i$s, a contradiction.
\end{proof}

\begin{lemma}\label{prop:differentsis}
Let $w \leq -2$. The composition $\Sigma^{w+2} \sigma_0 \Sigma^{w+3} \sigma_1 \cdots \Sigma^{-1} \sigma_{|w|-3} \sigma_{|w|-2}$ is non-zero. In particular, $\Hom (s_j, \Sigma^{i-j} s_i) \neq 0$ and $s_i \not\simeq s_j$, for $0 \leq i < j \leq -w-1$.  
\end{lemma}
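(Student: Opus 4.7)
The approach is an induction on $j$ showing that each partial composition
\[
c_j \coloneqq \Sigma^{-(j-1)} \sigma_0 \circ \Sigma^{-(j-2)} \sigma_1 \circ \cdots \circ \Sigma^{-1} \sigma_{j-2} \circ \sigma_{j-1} \in \Hom(s_j, \Sigma^{-j} s_0)
\]
is non-zero for $j = 1, \ldots, |w|-1$. The case $j = |w|-1$ is the statement of the lemma, since the shift on $\sigma_k$ is $\Sigma^{-(j-1-k)} = \Sigma^{w+2+k}$. The base case $j = 1$ is Lemma~\ref{lem:sigma0non-zero} (giving $c_1 = \sigma_0 \neq 0$), and the induction is driven by the identity $c_j = (\Sigma^{-1} c_{j-1}) \circ \sigma_{j-1}$.

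For the inductive step I would apply the contravariant functor $\Hom(-, \Sigma^{-j} s_0)$ to the triangle $s_j \stackrel{\sigma_{j-1}}{\longrightarrow} \Sigma^{-1} s_{j-1} \stackrel{f_{j-1}}{\longrightarrow} a_{j-1} \longrightarrow \Sigma s_j$ in order to extract the exact piece
\[
\Hom(a_{j-1}, \Sigma^{-j} s_0) \stackrel{f_{j-1}^*}{\longrightarrow} \Hom(\Sigma^{-1} s_{j-1}, \Sigma^{-j} s_0) \stackrel{\sigma_{j-1}^*}{\longrightarrow} \Hom(s_j, \Sigma^{-j} s_0).
\]
Since $c_j = \sigma_{j-1}^*(\Sigma^{-1} c_{j-1})$, it is enough to prove that $\Hom(a_{j-1}, \Sigma^{-j} s_0) = 0$, for then $\sigma_{j-1}^*$ is injective and the non-vanishing of $c_{j-1}$ propagates to $c_j$. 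To obtain this vanishing, observe that the $w$-orthogonality of $\sS$ gives $\Hom(s, \Sigma^{-j} s_0) = \Ext^{-j}(s, s_0) = 0$ for every $s \in \sS \setminus \{s_0\}$ and every $j \in [1, |w|-1]$, since $-j \in [w+1, -1]$. The full subcategory $V_j \coloneqq \{ x \in \Tw : \Hom(x, \Sigma^{-j} s_0) = 0 \}$ is closed under direct sums, summands, and extensions (the last via the contravariant long exact sequence associated to a triangle), so $V_j \supseteq \extn{\sS \setminus \{s_0\}}$; and $a_{j-1}$ lies in the latter by construction, as the codomain of a minimal left $\extn{\sS \setminus \{s_0\}}$-approximation.

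For the ``in particular'' part, the factorisation $c_j = (\Sigma^{i-j} c_i) \circ c_j^{(i)}$, where $c_j^{(i)} \coloneqq \Sigma^{-(j-i-1)} \sigma_i \circ \cdots \circ \sigma_{j-1} : s_j \to \Sigma^{i-j} s_i$ is the tail composition, forces $c_j^{(i)} \neq 0$ and hence $\Hom(s_j, \Sigma^{i-j} s_i) \neq 0$ for every $0 \leq i < j \leq |w|-1$. If $s_i \simeq s_j$ for some such pair, this would give $\Ext^{i-j}(s_j, s_j) \neq 0$ with $i - j \in [w+1, -1]$, contradicting the $w$-orthogonality of the $w$-Hom configuration $\sS \setminus \{s_0\} \cup \{s_j\}$ furnished by Proposition~\ref{prop:approximationmutation}. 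The main (and essentially only) point of substance is the $\Hom$-vanishing $\Hom(\extn{\sS \setminus \{s_0\}}, \Sigma^{-j} s_0) = 0$, and this reduces cleanly to the orthogonality axiom combined with the standard fact that vanishing of $\Hom$ into a shift is extension-stable; no further combinatorics on arcs is required.
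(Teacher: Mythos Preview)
Your proof is correct and follows essentially the same approach as the paper's: both hinge on the vanishing $\Hom(\extn{\sS\setminus\{s_0\}},\Sigma^{-j}s_0)=0$ for $1\leq j\leq |w|-1$, combined with the approximation triangle~\eqref{eq:iterativeleftmutation}, and both deduce $s_i\not\simeq s_j$ from the $w$-orthogonality of the mutated configuration $\sS\setminus\{s_0\}\cup\{s_j\}$. The only cosmetic difference is that the paper argues by descent (assuming the full composite vanishes and peeling off one $\sigma_k$ at a time via the factorisation property of the triangle until reaching $\sigma_0=0$), whereas you run the same mechanism as a forward induction on the length of the composite; these are contrapositive formulations of the same argument.
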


\begin{proof}
Suppose $\Sigma^{w+2} \sigma_0 \Sigma^{w+3} \sigma_1 \cdots \Sigma \sigma_{|w|-3} \sigma_{|w|-2} = 0$. Then there is a map $\alpha \colon a_{|w|-2} \rightarrow \Sigma^{w+1} s_0$ such that $\Sigma^{w+2} \sigma_0 \Sigma^{w+3} \sigma_1 \cdots \Sigma^{-1} \sigma_{|w|-3} = \alpha f_{|w|-2}$. But $a_{|w|-2} \in \langle \sS \setminus \{s_0\} \rangle$ and $\Hom (\langle \sS \setminus \{s_0\} \rangle, \Sigma^{w+1} s_0) = 0$, by hypothesis. Hence, $\Sigma^{w+2} \sigma_0 \Sigma^{w+3} \sigma_1 \cdots \Sigma^{-1} \sigma_{|w|-3} = 0$. Proceeding in this manner, we deduce that $\Sigma^{w+2} \sigma_0 \Sigma^{w+3} \sigma_1 \cdots \Sigma^{i} \sigma_{|w|-2+i} = 0$, for $w+2 \leq i \leq -1$. In particular, $\Sigma^{w+2} \sigma_0 = 0$, and so $\sigma_0 = 0$, contradicting Lemma \ref{lem:sigma0non-zero}. Therefore $\Sigma^{w+2} \sigma_0 \Sigma^{w+3} \sigma_1 \cdots \Sigma^{-1} \sigma_{|w|-3} \sigma_{|w|-2} \neq 0$. This implies that $\Sigma^{w+i+1} \sigma_{i-1} \Sigma^{w+i} \sigma_{i-2} \cdots \Sigma^{w+j+1} \sigma_{j-1}$ is also non-zero, and so $\Hom (s_j, \Sigma^{i-j} s_i) \neq 0$, for $0 \leq i < j \leq -w-1$.

Now, suppose, $s_i \simeq s_j$, for some $0 \leq i < j \leq -w-1$. On one hand, $\Ext^k (s_j,s_i) \neq 0$, for each $w+1 \leq k \leq -1$, by the first part of the proof. On the other hand, since $\sS \setminus \{s_0\} \cup \{s_i\}$ is a $w$-Hom-configuration, we have $\Ext^k (s_j,s_i) \simeq \Ext^k (s_i,s_i) = 0$, for each $w+1 \leq k \leq -1$, a contradiction.
\end{proof}

The following is an immediate consequence of Propositions \ref{prop:mutationsT-m}, \ref{prop:approximationmutation} and Lemma \ref{prop:differentsis}.  

\begin{theorem}
Let $\sS$ be a $w$-Hom-configuration in $\Tw$ and suppose $s_0 \in \sS$ has an overarc in $\sS$. Then the objects $s_0, s_1, \ldots, s_{|w|-1}$ as in \eqref{eq:iterativeleftmutation} are all the completions of $\sS \setminus \{s_0\}$.  
\end{theorem}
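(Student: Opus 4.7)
The plan is to combine three earlier results: Proposition~\ref{prop:mutationsT-m} gives the exact number of completions of $\sS\setminus\{s_0\}$, Proposition~\ref{prop:approximationmutation} shows that the iterative left-mutation procedure produces completions, and Lemma~\ref{prop:differentsis} shows that the objects produced are pairwise distinct. Since $\arc{s}_0$ has an overarc in $\tS$, Proposition~\ref{prop:mutationsT-m}(2) implies that there are precisely $|w|$ choices of $s^*$ for which $\sS\setminus\{s_0\}\cup\{s^*\}$ is a $w$-Hom configuration (one of which is $s_0$ itself). Thus it suffices to exhibit $|w|$ distinct such $s^*$.

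First I would verify by induction on $i$ that, for $0\leq i\leq |w|-1$, the configuration $\sS_i\coloneqq \sS\setminus\{s_0\}\cup\{s_i\}$ is a $w$-Hom configuration and that $\arc{s}_i$ has an overarc in $\tS_i$. The base case $i=0$ is our hypothesis. For the inductive step, note that the minimal left $\extn{\sS\setminus\{s_0\}}$-approximation of $\Sigma^{-1}s_{i-1}$ is exactly the minimal left $\extn{\sS_{i-1}\setminus\{s_{i-1}\}}$-approximation, so Proposition~\ref{prop:approximationmutation} applies to $\sS_{i-1}$ at $s_{i-1}$. Part~(1) of that proposition guarantees the approximation exists, part~(4) gives that $\sS_i$ is a $w$-Hom configuration, and part~(2) gives that $\arc{s}_i$ has an overarc in $\tS$; moreover, the proof of that proposition shows that the smallest overarc of $\arc{s}_i$ coincides with that of $\arc{s}_0$, call it $\arc{a}$. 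Since $\arc{a}\neq\arc{s}_0$, it lies in $\tS\setminus\{\arc{s}_0\}\subseteq\tS_i$, so $\arc{s}_i$ still has $\arc{a}$ as an overarc in $\tS_i$. This keeps the induction going and allows the iterative construction to proceed through all $|w|$ steps.

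Next, I would invoke Lemma~\ref{prop:differentsis} to conclude that the objects $s_0, s_1,\ldots, s_{|w|-1}$ are pairwise non-isomorphic. Together with Proposition~\ref{prop:approximationmutation}(3) (which handles $w=-1$, where $s^*=s$, the situation being trivial since $|w|=1$), this produces $|w|$ distinct completions of $\sS\setminus\{s_0\}$. Comparing with the count $|w|$ from Proposition~\ref{prop:mutationsT-m}(2) forces equality of the two sets, yielding the claim.

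The only mildly delicate point is keeping track of the approximation category $\extn{\sS\setminus\{s_0\}}$ being the ``correct'' one at each stage of the iteration, i.e.\ checking that the notation coincides with applying Proposition~\ref{prop:approximationmutation} to $\sS_{i-1}$ at $s_{i-1}$, and that the overarc of $\arc{s}_i$ survives into $\tS_i$. Once these bookkeeping issues are settled, the argument is a short pigeonhole.
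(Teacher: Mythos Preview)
Your proposal is correct and follows essentially the same approach as the paper, which simply declares the theorem an ``immediate consequence of Propositions~\ref{prop:mutationsT-m}, \ref{prop:approximationmutation} and Lemma~\ref{prop:differentsis}.'' You have written out explicitly the pigeonhole argument combining these three ingredients, including the inductive bookkeeping (that $\sS_{i-1}\setminus\{s_{i-1}\}=\sS\setminus\{s_0\}$ and that the overarc of $\arc{s}_i$ persists in $\tS_i$) which the paper leaves implicit.
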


In particular, every mutation of a $w$-simple-minded system can be described in terms of left (or right) mutations. The same does not hold for any of the $w$-Hom configurations which are not $w$-simple-minded systems. The following example illustrates the problem in these cases.    

\begin{example}
Let $w= -3$ and $\sS$ be the $w$-Riedtmann configuration with precisely two outer-isolated vertices, say $0$ and $13$, and formed by arcs of minimum length. Note that every arc in $\sS$ is then an outer-arc. Consider an arc which lies between the two outer-isolated vertices, say $\arc{s} \coloneqq (8,5)$. Then, the left mutation of $\arc{s}$ is $\arc{s}^\prime \coloneqq (13,6)$. We know that $(7,0)$ is the other possible replacement of $s$. However, we cannot obtain this object by left mutating $\arc{s}^\prime$, since this object does not lie between a pair of outer-isolated vertices, and therefore its negative shift does not admit a left $\extn{\sS \setminus \{s\}}$-approximation.    
\end{example}

\subsection*{Acknowledgments.}
The author would like to thank Funda\c{c}\~ao para a Ci\^encia e Tecnologia, for their financial support through Grant SFRH/BPD/90538/2012.

\end{document}